        \theoremstyle{plain}
        \newtheorem{thm}{Theorem}[section]
        \newtheorem{cor}[thm]{Corollary}
        \newtheorem{lem}[thm]{Lemma}
        \newtheorem{prop}[thm]{Proposition}
        \newtheorem{mainthm}{Theorem}
        \theoremstyle{definition}
        \newtheorem{ex}[thm]{Example}
        \newtheorem{notn}[thm]{Notation}
        \theoremstyle{remark}
        \newtheorem{rem}[thm]{Remark}
        \numberwithin{equation}{section}
        \newcommand{\suchthat}{\,:\,}
        \newcommand{\itemref}[1]{\eqref{#1}}
        \newcommand{\opcit}[1][]{[\emph{op.\ cit.}{#1}]\xspace}
        \newcommand{\Z}{\mathbb{Z}}
        \newcommand{\Orb}{\mathcal{O}}    
       \DeclareMathOperator{\spec}{Spec}
        \newcommand{\COH}[1]{\mathbf{Coh}({#1})}
        \newcommand{\QCOHB}{\mathbf{QCoh}}
        \newcommand{\QCOH}[2][]{\QCOHB^{#1}({#2})}
        \newcommand{\et}{\mathrm{\acute{e}t}}
        \newcommand{\Et}{\mathrm{\acute{E}t}}
        \newcommand{\Homstk}{\underline{\Hom}}
        \newcommand{\MOD}[1]{\mathbf{Mod}({#1})}          
        \newcommand{\ideal}{\triangleleft}      
        \DeclareMathOperator{\coker}{coker}
        \DeclareMathOperator{\Hom}{Hom}
        \DeclareMathOperator{\Ext}{Ext}
        \newcommand{\SRExt}[4]{\SExt^{{#1}}({#2};{#3},{#4})}
        \DeclareMathOperator{\Tor}{Tor}
        \DeclareMathOperator{\supp}{supp}
        \newcommand{\COHO}[1]{\mathcal{H}^{{#1}}}
        \newcommand{\trunc}[1]{\tau^{{#1}}}
        \newcommand{\RDERF}{\mathsf{R}}
        \newcommand{\LDERF}{\mathsf{L}}
        \newcommand{\DCAT}{\mathsf{D}}
        \newcommand{\RHom}{\RDERF\!\Hom}
        \newcommand{\SHom}{\mathcal{H}om}
        \newcommand{\SRHom}{\RDERF\SHom}
        \newcommand{\ID}[1]{\mathrm{Id}_{#1}}    
        \newcommand{\tensor}{\otimes}
        \newcommand{\homotopic}{\simeq}
        \newcommand{\AB}{\mathbf{Ab}}
        \newcommand{\SETS}{\mathbf{Sets}}        
        \newcommand{\Sch}{\mathbf{Sch}}
        \newcommand{\SCH}[2][]{\Sch^{#1}/{#2}}
\newcommand{\Van}{\mathbb{V}}
\newcommand{\shv}[1]{\mathcal{{#1}}}
\newcommand{\fndefn}[1]{\emph{{#1}}}
\newcommand{\FUNP}[1]{\mathbf{Fun}_{{#1}}^{\times}}
\newcommand{\LIN}[1]{\mathbf{Lin}_{{#1}}}
\newcommand{\SExt}{\mathcal{E}xt}
\newcommand{\holim}[1]{\underset{#1}{\mathrm{holim}}\,}
\newcommand{\E}[2]{\mathbb{H}_{{#1},{#2}}}
\newcommand{\spref}[1]{\href{http://stacks.math.columbia.edu/tag/#1}{#1}}
\newcommand{\QCPSH}[1]{{#1}_{*}}
\newcommand{\QCPBK}[1]{{#1}_{\mathrm{Q}}^*}
\newcommand{\DQCOH}{\DCAT_{\mathrm{QCoh}}}
\newcommand{\DCOH}{\DCAT_{\mathrm{Coh}}}
\newcommand{\TSUB}[2]{\mathcal{T}_{{#1}}^{{#2}}}
\newcommand{\CSUB}[1]{\mathcal{C}_{{#1}}}
\newcommand{\sbt}{\,\begin{picture}(-1,-1)(0,-2)\circle*{1.5}\end{picture}\ }
\newcommand{\cplx}[1]{\shv{{#1}}^{\sbt}}
\newcommand{\DUAL}[1]{\mathfrak{D}({#1})}
\title{Cohomology and base change for algebraic stacks} 
\author{Jack Hall}
\subjclass[2010]{Primary 14F05; Secondary 13D09, 14D20, 14D23}
\date{March 8, 2013}
\begin{document}
\begin{abstract} 
  We prove that cohomology and base change holds for algebraic stacks, generalizing work of Brochard in the tame case. We also show that Hom-spaces on algebraic stacks are represented by abelian cones, generalizing results of Grothendieck, Brochard, Olsson, Lieblich, and Roth--Starr. To accomplish all of this, we prove that a wide class of Ext-functors in algebraic geometry are coherent (in the sense of M.~Auslander).
\end{abstract}
\address{Department of Mathematics, KTH, 100 44 Stockholm, Sweden}
\email{jackhall@kth.se}
\maketitle
\section*{Introduction}
Our first main result is the following version of
cohomology and base change.
\begin{mainthm}\label{mainthm:cohobc}
  Fix a morphism of locally noetherian algebraic stacks $f : X \to S$
  which is separated and locally of finite type, $\cplx{M} \in
  \DCOH^-(X)$, and $\shv{N} \in \COH{X}$ which is properly supported
  and flat over $S$. For each integer $q\geq 0$ and morphism of
  noetherian algebraic stacks $g : T \to S$, there is a
  natural base change morphism:
  \[
  b^q(T) : g^*\SRExt{q}{f}{\cplx{M}}{\shv{N}} \to \SRExt{q}{f_T}{\LDERF \QCPBK{(g_X)}\cplx{M}}{g_X^*\shv{N}},
  \]
  where $f_T : X_T \to T$  denotes the pullback of $f$ by $g$ and $g_X
  : X_T \to X$ denotes the pullback of $g$ by $f$. Now fix $s\in |S|$ such that $b^q(s)$ is surjective. 
  \begin{enumerate}
  \item Then, there exists an open neighbourhood $U \subseteq S$
    of $s\in |S|$ such that for any $g : T \to S$ factoring through
    $U$, the map $b^q(T)$ is an isomorphism.
  \item The following are equivalent:
    \begin{enumerate}
    \item $b^{q+1}(s)$ is surjective,
    \item the coherent $\Orb_S$-module
      $\SRExt{q+1}{f}{\cplx{M}}{\shv{N}}$ is free at $s$. 
    \end{enumerate}
  \end{enumerate}
\end{mainthm}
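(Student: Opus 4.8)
The plan is to reduce both assertions to a short-exact-sequence analysis of a half-exact coherent functor, with the coherence of the Ext-functors (proved earlier) supplying the finiteness that makes the argument run. First I would pin down $b^q(T)$: it is assembled from the canonical base change maps for $\RDERF f_*$ and for $\SRHom$ via the projection formula and the $(\LDERF g_X^*,\, \RDERF \QCPSH{(g_X)})$-adjunction, and is manifestly natural in $g$ and compatible with the long exact sequences in $q$. Since the conclusions are local on $S$ and $b^q$ is compatible with smooth base change, I would choose a smooth presentation $S'\to S$ with $S' = \spec A$, $A$ noetherian local with closed point over $s$, prove the statements there, and descend at the end using that openness and freeness are smooth-local on $S$.

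The engine is the earlier coherence theorem. It shows that the functor $T^q$ sending $\shv{G}\in\QCOH{S}$ to $\SRExt{q}{f}{\cplx{M}}{\shv{N}}$ formed after derived base change by $\shv{G}$ is coherent and half-exact, and that the $T^q$ assemble into a cohomological $\delta$-functor whose connecting maps raise $q$. Equivalently, Zariski-locally on $\spec A$ there is a complex $E^\bullet$ of $\Orb_S$-modules, with finite locally free terms in a bounded range of degrees around $q$, together with natural isomorphisms
\[
\SRExt{q}{f_T}{\LDERF \QCPBK{(g_X)}\cplx{M}}{g_X^*\shv{N}} \iso \COHO{q}(\LDERF g^* E^\bullet)
\]
under which $b^q(T)$ is the tautological comparison map $g^*\COHO{q}(E^\bullet)\to\COHO{q}(\LDERF g^* E^\bullet)$. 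Here proper support of $\shv{N}$ yields the coherence and boundedness needed to arrange finite locally free terms, and flatness of $\shv{N}$ over $S$ is what lets $g_X^*\shv{N}$ enter underived. I expect manufacturing this complex compatibly with \emph{arbitrary}, possibly non-flat, derived base change in the stacky setting to be the crux; the remaining steps are formal.

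Granting the complex, I would finish pointwise at the closed point $s$, where $b^q(s)$ is the comparison map $\COHO{q}(E^\bullet)\otimes k(s)\to\COHO{q}(E^\bullet\otimes_A^{\LDERF} k(s))$. For (1): surjectivity of $b^q(s)$ together with Nakayama forces $T^q$ to be right exact on a neighbourhood $U$ of $s$, and right exactness of $T^q$ over $U$ is precisely the statement that $b^q(T)$ is an isomorphism for every $g$ factoring through $U$. For (2): I would run the long exact sequence of $T^\bullet$ attached to a short exact sequence $0\to\shv{G}'\to\shv{G}\to\shv{G}''\to 0$. Surjectivity of $b^q(s)$ kills the connecting map out of degree $q$, hence makes $T^{q+1}$ \emph{left} exact at $s$; then $b^{q+1}(s)$ surjective makes $T^{q+1}$ both left and right exact, i.e.\ exact, so its value $\SRExt{q+1}{f}{\cplx{M}}{\shv{N}} = T^{q+1}(\Orb_S)$ is flat and finite, hence free at $s$. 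Conversely, freeness of this value forces $T^{q+1}$ to be exact — using Auslander's description of left exact coherent functors as $\Hom(C,-)$ — whence $b^{q+1}(s)$ is surjective. Descending these statements along $S'\to S$ completes the proof.
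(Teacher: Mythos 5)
Your endgame is in the right spirit, but the step you yourself flag as the crux is a genuine gap---and it is precisely the point of this paper that the step is unavailable. You assert, as an ``equivalent'' reformulation of the coherence theorem, that Zariski-locally on $\spec A$ there is a complex $E^\bullet$ with finite locally free terms in a bounded range of degrees computing the $\Ext$-sheaves naturally in \emph{arbitrary} derived base change, with $b^q(T)$ the tautological map $g^*\COHO{q}(E^\bullet)\to\COHO{q}(\LDERF \QCPBK{g}E^\bullet)$. For non-tame algebraic stacks this is exactly what fails: such stacks typically have infinite cohomological dimension, so the ``Mumford Lemma'' that manufactures the bounded finite free complex in the classical proof \cite[III.7.7.5]{EGA} does not apply---this is Brochard's observation quoted in the introduction, and it is why Theorem \ref{mainthms:cohstk_flat} is stated in terms of Auslander coherence rather than corepresentability by a complex. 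Coherence of $E^q(I)=\Ext^q_{\Orb_X}(\cplx{M},\shv{N}\tensor_{\Orb_X} f^*I)$ is strictly weaker than what you need: it gives a presentation $\Hom_A(J,-)\to\Hom_A(I,-)\to E^q\to 0$, and even half-exact coherent functors are only \'etale-locally corepresentable by a bounded-above complex (a result of a separate paper, not used here); the class of functors corepresentable by a complex is moreover poorly behaved (not even closed under direct summands). Nothing guarantees your identification exists, let alone compatibly with all the maps $b^q(T)$, so ``the remaining steps are formal'' does not constitute a proof.

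The paper's actual argument sidesteps any representing complex, and your final analysis is essentially the functor-level version of it. After smooth-localizing to $S=\spec A$, $T=\spec B$, one works with the single $A$-linear functor $E^q$: Lemma \ref{lem:base_change} yields $E^q(g_*J)\cong\Ext^q_{\Orb_{X_T}}(\LDERF\QCPBK{(g_X)}\cplx{M},g_X^*\shv{N}\tensor_{\Orb_{X_T}}f_T^*J)$ for $B$-modules $J$, so base change is encoded by evaluating $E^q$ on pushforward modules---no corepresenting object is ever needed. Then Theorem \ref{mainthms:cohstk_flat} (coherence, hence finite generation), Lemma \ref{lem:lp_stk_hom} (preservation of filtered colimits), and flatness of $\shv{N}$ over $S$ (making $(E^q,E^{q+1})$ a cohomological pair) feed directly into Corollary \ref{cor:prop_exch_fg}, the exchange property of Ogus--Bergman for half-exact bounded functors augmented by the openness-of-vanishing result (Corollary \ref{cor:openness_van}, which rests on Proposition \ref{prop:nakayama} and Lemma \ref{lem:nakayama2}). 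Your pointwise moves in (1) and (2)---Nakayama, left/right exactness via the long exact sequence, and Eilenberg--Watts/Auslander corepresentability of left exact coherent functors---are exactly what Ogus--Bergman's theorem packages at the level of functors; rerun them directly on $E^q$ and $E^{q+1}$, never invoking $E^\bullet$, and your proof is repaired and coincides with the paper's.
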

In the proof of Theorem \ref{mainthm:cohobc} for projective morphisms
\cite[III.7.7.5]{EGA}, the 
essential point was to corepresent the 
relevant functors by bounded above complexes of coherent sheaves on
$S$. Thus the following notion was indispensible: for a 
noetherian scheme $S$, a functor $\QCOH{S} \to \QCOH{S}$ is
\fndefn{corepresentable by a complex} if it is of the form $\shv{J}
\mapsto \COHO{0}(\SRHom_{\Orb_S}(\cplx{Q},\shv{J}))$ where $\cplx{Q} \in
\DCOH^-(S)$. 

Brochard noticed \cite[App.~A]{MR2871149}, however,
that corepresenting the cohomology functors of a non-tame stack by
complexes is quite subtle---the problem is that these stacks tend to
have infinite cohomological dimension. In particular, the ``Mumford Lemma''
\cite[Lem.~5.1]{MR0282985} is no longer applicable. Our next main
result shows that this problem can be circumvented when one has duality.
\begin{mainthm}\label{mainthms:coh_stk_dual}
  Fix a noetherian algebraic stack $S$ which admits a dualizing
  complex $\cplx{K}$, and a morphism of algebraic stacks $f : X
  \to S$ which is proper. Let $\cplx{M} \in  
  \DCOH^-(X)$ and $\cplx{N} \in
  \DCOH^b(X)$. If $\cplx{N}$ has finite tor-dimension over $S$, 
  then there exists a quasi-isomorphism:
  \begin{equation*}
    \RDERF \QCPSH{f}\SRHom_{\Orb_X}(\cplx{M},\cplx{N}
    \tensor_{\Orb_X}^{\LDERF } \LDERF \QCPBK{f}\cplx{I}) \homotopic
    \SRHom_{\Orb_S}(\cplx{E}_{\cplx{M},\cplx{N}},\cplx{I}) \quad
    \forall\, \cplx{I} \in
    \DQCOH^+(S),
  \end{equation*}  
  natural in $\cplx{I}$, where:
  \[
  \cplx{E}_{\cplx{M},\cplx{N}} \homotopic
  \SRHom_{\Orb_S}(\RDERF \QCPSH{f}\SRHom_{\Orb_X}(\cplx{M},\cplx{N}
  \tensor_{\Orb_X}^{\LDERF } \LDERF
  \QCPBK{f}\cplx{K}),\cplx{K}) \in \DCOH^-(S). 
  \]
  In addition, if $\cplx{N} \homotopic \shv{N}[0]$, where $\shv{N} \in
  \COH{X}$ is flat over $S$, then the formation of
  $\cplx{E}_{\cplx{M},\cplx{N}}$ is compatible with base change. 
\end{mainthm}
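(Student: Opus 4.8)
The plan is to reduce the assertion to Grothendieck duality for the proper morphism $f$ together with the biduality supplied by the dualizing complex, using the finite tor-dimension of $\cplx{N}$ to keep every $\SRHom$ and every derived tensor inside the coherent world. Write $D_S(-)=\SRHom_{\Orb_S}(-,\cplx{K})$ and $D_X(-)=\SRHom_{\Orb_X}(-,f^{!}\cplx{K})$, where $f^{!}$ is the right adjoint of $\RDERF\QCPSH{f}$; since $f$ is proper and $\cplx{K}$ is dualizing, $f^{!}\cplx{K}$ is dualizing on $X$, so $D_X$ and $D_S$ are contravariant involutions of $\DCOH(X)$ and $\DCOH(S)$. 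The single external input is the sheafified duality isomorphism $\RDERF\QCPSH{f}\SRHom_{\Orb_X}(\cplx{A},f^{!}\cplx{I})\homotopic\SRHom_{\Orb_S}(\RDERF\QCPSH{f}\cplx{A},\cplx{I})$, natural in $\cplx{A}\in\DCOH(X)$ and $\cplx{I}\in\DQCOH^+(S)$. Setting $\cplx{W}=\SRHom_{\Orb_X}(\cplx{M},\cplx{N}\tensor_{\Orb_X}^{\LDERF}\LDERF\QCPBK{f}\cplx{K})$, the finite tor-dimension of $\cplx{N}$ places $\cplx{N}\tensor_{\Orb_X}^{\LDERF}\LDERF\QCPBK{f}\cplx{K}$ in $\DCOH^b(X)$, and coherence of $\SRHom$ (the Ext-coherence driving the paper) together with finiteness of proper pushforward places $\cplx{G}:=\RDERF\QCPSH{f}\cplx{W}$ in $\DCOH(S)$; hence $\cplx{E}_{\cplx{M},\cplx{N}}=D_S\cplx{G}\in\DCOH^-(S)$, as claimed. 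Taking $\cplx{I}=\cplx{K}$ in the duality isomorphism gives $\RDERF\QCPSH{f}\,D_X\homotopic D_S\,\RDERF\QCPSH{f}$ on $\DCOH(X)$, whence $\cplx{E}_{\cplx{M},\cplx{N}}\homotopic\RDERF\QCPSH{f}(D_X\cplx{W})$.

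The core step is the local relative-duality isomorphism on $X$, natural in $\cplx{I}$,
\[
\SRHom_{\Orb_X}(D_X\cplx{W},f^{!}\cplx{I})\homotopic\SRHom_{\Orb_X}(\cplx{M},\cplx{N}\tensor_{\Orb_X}^{\LDERF}\LDERF\QCPBK{f}\cplx{I}).
\]
Granting it and applying $\RDERF\QCPSH{f}$, the left side becomes $\SRHom_{\Orb_S}(\RDERF\QCPSH{f}(D_X\cplx{W}),\cplx{I})\homotopic\SRHom_{\Orb_S}(\cplx{E}_{\cplx{M},\cplx{N}},\cplx{I})$ by the duality isomorphism, which is precisely the asserted quasi-isomorphism. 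To prove the displayed local statement I would first take $\cplx{I}\in\DCOH^b(S)$, where $f^{!}\cplx{I}\homotopic D_X\LDERF\QCPBK{f}D_S\cplx{I}$; Hom–tensor adjunction and biduality on $X$ then convert both sides into the same object, the finite tor-dimension of $\cplx{N}$ being exactly what keeps $\cplx{N}\tensor_{\Orb_X}^{\LDERF}\LDERF\QCPBK{f}\cplx{I}$ coherent and bounded so that $D_X$ may be applied and the two expressions match. I would then drop the boundedness of $\cplx{I}$: both members are exact functors of $\cplx{I}\in\DQCOH^+(S)$ that preserve homotopy limits, so a way-out/truncation argument propagates the isomorphism from $\DCOH^b(S)$ to all of $\DQCOH^+(S)$.

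For the base-change assertion with $\cplx{N}\homotopic\shv{N}[0]$ and $\shv{N}$ flat over $S$, I would trace each constituent of $\cplx{E}_{\cplx{M},\cplx{N}}=D_S\RDERF\QCPSH{f}\SRHom_{\Orb_X}(\cplx{M},\shv{N}\tensor_{\Orb_X}^{\LDERF}\LDERF\QCPBK{f}\cplx{K})$ through a morphism $g:T\to S$. Flatness of $\shv{N}$ kills all higher $\Tor$, so $\LDERF\QCPBK{(g_X)}(\shv{N}\tensor_{\Orb_X}^{\LDERF}\LDERF\QCPBK{f}\cplx{K})\homotopic g_X^*\shv{N}\tensor_{\Orb_{X_T}}^{\LDERF}\LDERF\QCPBK{(f_T)}g^*\cplx{K}$; proper pushforward and the coherent $\SRHom$ each commute with $g$ (cohomology and base change, i.e. Theorem~A, together with the base-change compatibility of $\SRHom$); and, once $\cplx{K}$ is matched with a compatible dualizing complex on $T$, $D_S$ commutes with $\LDERF\QCPBK{g}$. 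Assembling these identifications exhibits $\LDERF\QCPBK{g}\cplx{E}_{\cplx{M},\shv{N}}$ as the $\cplx{E}$ formed for $f_T:X_T\to T$ from $\LDERF\QCPBK{(g_X)}\cplx{M}$ and $g_X^*\shv{N}$, which is the desired compatibility.

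The main obstacle I anticipate is the local relative-duality isomorphism for a general proper morphism of stacks. Because the stacks may have infinite cohomological dimension, there is no relative dualizing line bundle and no recourse to perfect complexes or bounded resolutions, so the finite tor-dimension of $\cplx{N}$ and the ambient dualizing complex must carry the entire burden of making $f^{!}\cplx{I}$ and $\cplx{N}\tensor_{\Orb_X}^{\LDERF}\LDERF\QCPBK{f}\cplx{I}$ interact correctly; controlling this simultaneously with the passage from bounded $\cplx{I}$ to arbitrary $\cplx{I}\in\DQCOH^+(S)$, and with the duality-versus-base-change compatibility above, is the delicate part of the argument.
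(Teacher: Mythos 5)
Your proposal routes everything through a full Grothendieck duality formalism for the proper morphism $f$: the existence of a right adjoint $f^{!}$ of $\RDERF\QCPSH{f}$ on $\DQCOH^{+}$, the sheafified duality isomorphism $\RDERF\QCPSH{f}\SRHom_{\Orb_X}(\cplx{A},f^{!}\cplx{I})\homotopic\SRHom_{\Orb_S}(\RDERF\QCPSH{f}\cplx{A},\cplx{I})$ natural in $\cplx{I}\in\DQCOH^{+}(S)$, the claim that $f^{!}\cplx{K}$ is dualizing on $X$, and the formula $f^{!}\homotopic D_X\LDERF\QCPBK{f}D_S$ on $\DCOH^{b}(S)$. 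This is precisely the black box the paper is engineered to avoid: the motivating difficulty (see the discussion of Brochard in the introduction) is that non-tame stacks have infinite cohomological dimension, and at this level of generality no such duality package was available---indeed for $B\mathbb{G}_a$ in positive characteristic $\DQCOH$ is not compactly generated, so even the standard Brown-representability construction of the right adjoint is obstructed, and nothing in the paper's toolkit produces $f^{!}$ for a non-representable proper $f$. The paper's chain of quasi-isomorphisms never leaves the base: it uses only biduality for $\cplx{K}$ on $S$, Hom--tensor adjunction \eqref{eq:lhomadj}, Lemma \ref{lem:big_one}, and \emph{trivial} duality \eqref{eq:sheaf_trivial_duality} (the bare adjunction $\LDERF\QCPBK{f}\dashv\RDERF\QCPSH{f}$), with no $f^{!}$ and no dualizing complex on $X$. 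Granting your toolbox the core step does follow by the biduality manipulations you indicate, so the logic is consistent, but the theorem has been reduced to a strictly stronger unproved statement rather than proved. A second, independent gap is the passage from $\cplx{I}\in\DCOH^{b}(S)$ to $\DQCOH^{+}(S)$: a way-out argument applies to a \emph{morphism} of functors, so you first need a natural comparison map defined on all of $\DQCOH^{+}(S)$, and ``both sides preserve homotopy limits'' does not supply one, because $\holim{n}$ is a non-functorial cone construction (the paper flags exactly this, and circumvents it with the map $\psi_{\cplx{M},\cplx{N}}:\cplx{M}\to\cplx{N}\tensor_{\Orb_X}^{\LDERF}\LDERF\QCPBK{f}\cplx{E}_{\cplx{M},\cplx{N}}$ corresponding to the identity, Lemma \ref{lem:big_one}\itemref{item:big_one:A}, a way-out reduction to quasicoherent sheaves, and the filtered-colimit comparison of Lemma \ref{lem:lp_stk_hom}).

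The base-change argument has two concrete defects. First, it is circular: you invoke Theorem \ref{mainthm:cohobc}, but in the paper Theorem \ref{mainthm:cohobc} is deduced from Theorem \ref{mainthms:cohstk_flat}, which in turn rests on the present theorem. Second, the commutations you assemble fail in the stated generality: $g:T\to S$ is an arbitrary morphism of noetherian stacks, so $\LDERF\QCPBK{g}\cplx{K}$ need not be dualizing on $T$ (already for a closed immersion of a non-Gorenstein subscheme of a regular scheme; dualizing complexes transport along $g^{!}$ for finite-type $g$, not along $g^{*}$), and consequently $D_S$ does not commute with $\LDERF\QCPBK{g}$; likewise $\LDERF\QCPBK{g}\RDERF\QCPSH{f}\homotopic\RDERF\QCPSH{(f_T)}\LDERF\QCPBK{(g_X)}$ requires tor-independence and is not what flatness of $\shv{N}$ directly yields. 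The paper's proof assumes only that $T$ admits \emph{some} dualizing complex and never pulls $D_S$ back: Lemma \ref{lem:base_change} (a statement about pushing $\cplx{I}$ forward along $g$, where the flatness of $\shv{N}$ enters) shows that $\LDERF\QCPBK{g}\cplx{E}_{\cplx{M},\shv{N}[0]}$ and $\cplx{E}_{\LDERF\QCPBK{(g_X)}\cplx{M},(g_X^{*}\shv{N})[0]}$ corepresent the same functor on $\DQCOH^{+}(T)$ via \eqref{eq:sheaf_trivial_duality} and \eqref{eq:lghom}, and then identifies the two complexes by the homotopy-limit-of-truncations device of Lemma \ref{lem:lc_der_cat}. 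You would need to replace your commutation chain by an argument of this corepresentability type for the base-change claim to stand.
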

Since any separated scheme of finite type over a Gorenstein ring
(e.g.~$\spec \Z$ or a field) and the spectrum of any maximal-adically
complete noetherian local ring admits a dualizing complex. Thus, Theorem
\ref{mainthms:coh_stk_dual} covers most cases encountered in practice
(in particular, it generalizes \cite[Lem.~6.1]{MR1437495}). 

We wish to point out, however, that the collection of functors
$\QCOH{S} \to \QCOH{S}$ which are 
corepresentable by a complex is poorly behaved. Indeed, it is 
not even closed under direct summands \cite[Prop.~4.6 \& Ex.~5.5]{MR1656482}.

The following generalization of functors which are corepresentable by
a complex was considered by M.~Auslander \cite{MR0212070} in order to 
correct such deficiencies. For an
affine (not necessarily noetherian) scheme $S$, a functor $F :
\QCOH{S} \to \AB$ is \fndefn{coherent} if there exists a morphism of
quasicoherent $\Orb_S$-modules $\shv{K}_1 \to \shv{K}_2$, such that
for all $\shv{I}\in \QCOH{S}$, there is a natural isomorphism of
abelian groups:
\[
F(\shv{I}) \cong \coker(\Hom_{\Orb_S}(\shv{K}_2,\shv{I}) \to
\Hom_{\Orb_S}(\shv{K}_1,\shv{I})). 
\]
Note that if $S$ is noetherian, the coherent functors of R.~Hartshorne
\cite{MR1656482} are precisely those coherent functors of M.~Auslander
\cite{MR0212070} which preserve direct limits. 

In any case, the
collection of coherent functors is very well-behaved: it is an 
abelian category which is closed under extensions and inverse
limits---precisely the sort of properties that are convenient
to have at one's disposal when performing induction arguments. Thus, using
Theorem \ref{mainthms:coh_stk_dual}, we can prove the following Theorem. 
\begin{mainthm}\label{mainthms:cohstk_flat}
  Fix an affine scheme $S$, a morphism of algebraic stacks $f : X \to
  S$ which is separated and locally of finite presentation, 
  $\cplx{M}\in \DQCOH(X)$, and $\shv{N} \in \QCOH{X}$, with $\shv{N}$ of finite
  presentation, properly supported and flat over $S$. Then, the functor:
  \[
  \Hom_{\Orb_X}(\cplx{M}, \shv{N}\tensor_{\Orb_X}^\LDERF \LDERF \QCPBK{f}(-)) : \QCOH{S}
  \to \AB
  \]
  is coherent. 
\end{mainthm}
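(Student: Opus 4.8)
The plan is to force the situation into the shape of Theorem \ref{mainthms:coh_stk_dual} and then read coherence off the duality object. I begin with three reductions valid over an arbitrary affine base. First, since $\shv{N}$ is flat over $S$, the derived functor $\shv{I}\mapsto \shv{N}\tensor_{\Orb_X}^{\LDERF}\LDERF\QCPBK{f}\shv{I}$ is exact and agrees with the single sheaf $\shv{N}\tensor_{\Orb_X}\QCPBK{f}\shv{I}\in\QCOH{X}$, supported on $\supp\shv{N}$. Second, let $i : Z\hookrightarrow X$ be the scheme-theoretic support of $\shv{N}$, so that $\shv{N}=i_*\shv{N}'$ with $\shv{N}'\in\COH{Z}$ flat over $S$; since $\shv{N}$ is properly supported, $f i : Z\to S$ is proper. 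The projection formula together with the adjunction $(\LDERF\QCPBK{i},i_*)$ gives a natural isomorphism
\[
\Hom_{\Orb_X}(\cplx{M},\shv{N}\tensor_{\Orb_X}\QCPBK{f}\shv{I})\cong \Hom_{\Orb_Z}(\LDERF\QCPBK{i}\cplx{M},\shv{N}'\tensor_{\Orb_Z}\QCPBK{(f i)}\shv{I}),
\]
so I may replace $(X,f,\cplx{M},\shv{N})$ by $(Z,f i,\LDERF\QCPBK{i}\cplx{M},\shv{N}')$ and assume $f$ is proper. Third, because the target is concentrated in degree $0$, the functor $\Hom_{\Orb_X}(\cplx{M},-)$ factors through $\trunc{\ge 0}\cplx{M}$, so I may assume $\cplx{M}\in\DQCOH^{\ge 0}(X)$.

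Next I reduce to $\cplx{M}\in\DCOH^-(X)$. Over the affine scheme $S$ every quasi-coherent sheaf is a filtered colimit of finitely presented ones, so writing $\cplx{M}\homotopic\hocolim{n}\,\trunc{\le n}\cplx{M}$ and approximating each bounded truncation accordingly presents $\cplx{M}$, after passing to a cofinal sequential system, as a homotopy colimit of objects $\cplx{M}_\alpha\in\DCOH^-(X)$. Applying $\Hom_{\Orb_X}(-,\shv{N}\tensor_{\Orb_X}\QCPBK{f}\shv{I})$ converts this colimit into an inverse limit and yields a Milnor exact sequence expressing $\Hom_{\Orb_X}(\cplx{M},\shv{N}\tensor_{\Orb_X}\QCPBK{f}(-))$ as an extension of $\lim_\alpha$ of the degree-$0$ functors by $\lim^{1}_\alpha$ of the degree-$(-1)$ functors attached to the $\cplx{M}_\alpha$. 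Granting that each of these functors is coherent, the fact that coherent functors form an abelian category closed under inverse limits and extensions shows the target functor is coherent; in particular $\lim^{1}$, being the cokernel of a map of coherent countable products, is again coherent.

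It therefore suffices to treat $\cplx{M}\in\DCOH^-(X)$ and to show that $\shv{I}\mapsto\COHO{j}\bigl(\Hom_{\Orb_S}^{\bullet}(\cplx{P},\shv{I})\bigr)$ is coherent for every $j$. Write $S=\spec A$ as a filtered limit of affine schemes $S_\lambda$ with each $A_\lambda$ of finite type over $\Z$; each $S_\lambda$ is separated and of finite type over $\spec\Z$, hence admits a dualizing complex, and standard limit arguments descend the proper morphism $f$, the flat finitely presented sheaf $\shv{N}$, and the bounded-above coherent complex $\cplx{M}$ to some $S_\lambda$. Theorem \ref{mainthms:coh_stk_dual} then produces $\cplx{E}_\alpha\in\DCOH^-(S_\lambda)$; I resolve it by a bounded-above complex of finite free modules and pull back to $S$, where the base-change compatibility of $\cplx{E}$ asserted in Theorem \ref{mainthms:coh_stk_dual} (applicable because $\shv{N}$ is a flat sheaf) identifies $\cplx{E}$ over $S$ with a bounded-above complex $\cplx{P}$ of finite free $\Orb_S$-modules. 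Combining the duality quasi-isomorphism with the exactness of $\RDERF\Gamma(S,-)$ on $\DQCOH(S)$ gives a natural identification with $\COHO{0}\bigl(\Hom_{\Orb_S}^{\bullet}(\cplx{P},\shv{I})\bigr)$. Setting $\shv{C}=\coker(\shv{P}^{-1}\to\shv{P}^{0})$ and factoring the differential $\shv{P}^{0}\to\shv{P}^{1}$ through a map $\shv{C}\to\shv{P}^{1}$, one identifies this cohomology with
\[
\coker\bigl(\Hom_{\Orb_S}(\shv{P}^{1},\shv{I})\to\Hom_{\Orb_S}(\shv{C},\shv{I})\bigr),
\]
which is manifestly coherent with presentation $\shv{C}\to\shv{P}^{1}$; the analogous computation in each cohomological degree supplies the coherence of the $\COHO{j}$-functors required above.

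The main obstacle I anticipate is the reduction carried out in the second paragraph: controlling the nested homotopy colimits that present a general $\cplx{M}\in\DQCOH(X)$, arranging a cofinal sequential system so that the Milnor sequence is available, and verifying that the resulting $\lim$ and $\lim^{1}$ terms genuinely lie in the abelian category of coherent functors. Everything else is either a formal consequence of Theorem \ref{mainthms:coh_stk_dual} and its base-change compatibility, or a standard application of limit methods for algebraic stacks of finite presentation.
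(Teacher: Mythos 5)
Your overall architecture matches the paper's: reduce to $f$ proper via the support of $\shv{N}$ and adjunction, descend by limit methods to a noetherian affine $S_\lambda$ of finite type over $\Z$ (which admits a dualizing complex), apply Theorem \ref{mainthms:coh_stk_dual} there, and handle unbounded $\cplx{M}$ by truncations and the closure properties of coherent functors. But the decisive step---returning from $S_\lambda$ to the non-noetherian $S$---has a genuine gap. You invoke the base-change compatibility of $\cplx{E}_{\cplx{M},\shv{N}[0]}$ along $g : S \to S_\lambda$, but that clause of Theorem \ref{mainthms:coh_stk_dual} is established only for noetherian targets (and its proof moreover requires the target to admit a dualizing complex); your $S$ is an arbitrary affine scheme, so it does not apply. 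Worse, even granting an extension to general $g$, the compatibility identifies the pullback of $\cplx{E}$ with the corepresenting object for the functor attached to the \emph{derived} pullback $\LDERF \QCPBK{(g_X)}$ of the descended complex, whereas your theorem concerns $\cplx{M}$, i.e.\ the \emph{underived} pullback of its descent: since the descended module cannot be chosen flat over $S_\lambda$, the two differ by Tor sheaves, and consequently the complex $\cplx{P}$ you pull back from $S_\lambda$ does not corepresent the functor you need. This Tor discrepancy is exactly the crux of the paper's proof: there, coherence over $S$ of $\E{\LDERF \QCPBK{h}\shv{M}_0}{\shv{N}[l]}$ (with $h : X \to X_0$ the projection) is obtained not by pulling back $\cplx{E}$ but by restricting the coherent functor along the ring map (Lemma \ref{lem:base_change} together with Example \ref{ex:coh_rest}), and the passage from $\LDERF \QCPBK{h}\shv{M}_0$ to $h^*\shv{M}_0$ is the content of Lemma \ref{lem:red_noeth_mod}---whose pullbacks are deliberately underived---which consumes an inductive hypothesis, namely coherence of $\E{\shv{F}}{\shv{N}[r]}$ for \emph{all} $\shv{F} \in \QCOH{X}$ and all $r < n$, supplied in turn by Lemma \ref{lem:red_qcfp_F}. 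Your proposal contains no substitute for this double induction, and the base-change identification you rely on is false as stated.

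Two secondary problems sit in your middle reduction. First, on the non-noetherian $X$ the category $\DCOH^-(X)$ is not defined, and an entire bounded-above complex cannot be descended to a single finite level $S_\lambda$ by standard limit methods: only finitely presented sheaves (finite data) descend, which is why the paper first reduces, via Lemma \ref{lem:cohstk_bdd_red} and Lemma \ref{lem:red_qcfp_F}, to a single $\shv{M} \in \QCOH{X}$ of finite presentation before invoking \cite[Prop.~B.3]{rydh-2009}. Second, your ``cofinal sequential system'' need not exist: the filtered system presenting a quasicoherent sheaf on the stack $X$ (which requires Rydh's theorem, not merely affineness of $S$) admits no countable cofinal subsystem in general, so the Milnor $\lim$--$\lim^1$ sequence is unavailable; the paper avoids it entirely using the two-term complex $\oplus_{\lambda\leq\lambda'}\shv{M}_\lambda \to \oplus_\lambda \shv{M}_\lambda$ of Lemma \ref{lem:red_qcfp_F} and the triangle of direct sums of truncations in Lemma \ref{lem:cohstk_bdd_red}, which only use closure of coherent functors under products, kernels, and cokernels. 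Your opening reductions (flatness, support, $\trunc{\geq 0}$) and the final free-resolution computation over affine $S$ are correct; the argument fails in between.
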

We wish to emphasize that Theorem \ref{mainthms:cohstk_flat}
eliminates from Theorem \ref{mainthms:coh_stk_dual} the finiteness
hypotheses on $S$ (i.e.~$S$ is permitted to be non-noetherian) and on
$\cplx{M}$ (i.e.~$\cplx{M}$ can be unbounded with quasicoherent
cohomology). Moreover, the employment of Auslander's definition of
coherent functors is essential for the proof and truth of Theorem
\ref{mainthms:cohstk_flat}. If $S$ is noetherian and $\cplx{M} \in
\DCOH^-(X)$, then Theorem \ref{mainthms:cohstk_flat}
implies that the functor is coherent in the sense of Hartshorne (see Lemma \ref{lem:lp_stk_hom}). 
 
Theorem \ref{mainthm:cohobc} is proved by combining a clever
result of A.~Ogus and G.~Bergman \cite[Cor.~5.1]{MR0302633},
Theorem \ref{mainthms:cohstk_flat}, and some general vanishing results
for coherent functors. An interesting application of Theorem
\ref{mainthms:cohstk_flat} is the following:
given a scheme $S$, an algebraic $S$-stack $X$, and $\shv{M}$, $\shv{N}
\in \QCOH{X}$, we define the $S$-presheaf
$\Homstk_{\Orb_{X}/S}(\shv{M},\shv{N})$ as follows:
\begin{align*}
  \Homstk_{\Orb_X/S}(\shv{M},\shv{N})[T\xrightarrow{\tau} S] &=
  \Hom_{\Orb_{X_T}}(\tau_X^*\shv{M},\tau_X^*\shv{N}),
\end{align*}
where $\tau_X : X \times_S T \to X$ is the projection. Then, we prove
\begin{mainthm}\label{mainthms:bdd_hom_shf}
  Fix a scheme $S$ and a morphism of algebraic stacks $f :X \to S$, which
  is separated and locally of finite presentation. Let $\shv{M}$,
  $\shv{N} \in \QCOH{X}$, with $\shv{N}$ of finite presentation, flat
  over $S$, with support proper over $S$. Then,
  $\Homstk_{\Orb_X/S}(\shv{M},\shv{N})$ is  
  representable by an abelian cone over $S$ (which is, in particular,
  affine over $S$). If $\shv{M}$ is of finite presentation, then 
  $\Homstk_{\Orb_X/S}(\shv{M},\shv{N})$ is finitely presented over $S$. 
\end{mainthm}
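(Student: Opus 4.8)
The plan is to recognize the presheaf as the $S$-scheme-valued incarnation of the coherent functor furnished by Theorem~\ref{mainthms:cohstk_flat}, and then to realize that coherent functor as an abelian cone. First I would reduce to the case that $S$ is affine: representability by an $S$-affine abelian cone and finite presentation over $S$ are both Zariski-local on $S$, the formation of $\Homstk_{\Orb_X/S}(\shv{M},\shv{N})$ commutes with restricting the base to an open $U\subseteq S$ (for $\tau : T\to U\subseteq S$ one has $X_T = X_U\times_U T$, so $\Homstk_{\Orb_X/S}(\shv{M},\shv{N})|_{\Sch/U} = \Homstk_{\Orb_{X_U}/U}(\shv{M}|_{X_U},\shv{N}|_{X_U})$), and abelian cones over the members of an open cover glue uniquely to an abelian cone over $S$. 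So assume $S=\spec R$ is affine. Next, for any affine morphism $\tau : T\to S$ I would identify the value of the presheaf with the functor of Theorem~\ref{mainthms:cohstk_flat}: writing $\shv{I}=\QCPSH{\tau}\Orb_T\in\QCOH{S}$, the flatness of $\shv{N}$ over $S$ places $\shv{N}\tensor_{\Orb_X}^\LDERF\LDERF\QCPBK{f}\shv{I}$ in degree zero, while affine base change and the projection formula give $\RDERF\QCPSH{(\tau_X)}\tau_X^*\shv{N}\homotopic\shv{N}\tensor_{\Orb_X}^\LDERF\LDERF\QCPBK{f}\shv{I}$ (using that $\tau$ affine forces $\RDERF\QCPSH{\tau}\Orb_T=\shv{I}$); the adjunction $(\LDERF\tau_X^*,\RDERF\QCPSH{(\tau_X)})$ then yields a natural isomorphism
\[
\Homstk_{\Orb_X/S}(\shv{M},\shv{N})[T]\cong F(\shv{I}),\qquad F(\shv{I}):=\Hom_{\Orb_X}(\shv{M},\shv{N}\tensor_{\Orb_X}^\LDERF\LDERF\QCPBK{f}\shv{I}).
\]

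By Theorem~\ref{mainthms:cohstk_flat}, applied to the complex $\shv{M}[0]$, the functor $F:\QCOH{S}\to\AB$ is coherent. The additional observation that makes everything work is that $F$ is \emph{left exact}: since $\shv{N}$ is flat over $S$, the functor $\shv{I}\mapsto\shv{N}\tensor_{\Orb_X}^\LDERF\LDERF\QCPBK{f}\shv{I}$ is exact with values in degree zero, and $\Hom_{\Orb_X}(\shv{M},-)$ is left exact. At this point I would invoke the structure theory of coherent functors: a left-exact coherent functor is corepresentable, i.e.\ $F\cong\Hom_{\Orb_S}(\shv{Q},-)$ for some $\shv{Q}\in\QCOH{S}$ (Auslander \cite{MR0212070}; cf.\ Hartshorne \cite{MR1656482} in the noetherian case). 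Concretely, if $\shv{K}_1\xrightarrow{\phi}\shv{K}_2$ is a presentation of $F$ then $\shv{Q}=\coker\phi$ does the job, the left-exactness of $F$ being exactly what forces the natural transformation $\Hom_{\Orb_S}(\shv{Q},-)\to F$ to be an isomorphism. The hard part will be precisely this passage from the coherent functor to a cone: without left-exactness one only has a cokernel of representable functors, which is not representable by an affine $S$-scheme, so it is essential to exploit the flatness of $\shv{N}$ to land in the corepresentable (hence conical) regime.

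With $F\cong\Hom_{\Orb_S}(\shv{Q},-)$ in hand, the abelian cone $\Van(\shv{Q})=\spec_S\operatorname{Sym}_{\Orb_S}\shv{Q}$ has functor of points $T\mapsto\Hom_{\Orb_S}(\shv{Q},\QCPSH{\tau}\Orb_T)$, which agrees with $\Homstk_{\Orb_X/S}(\shv{M},\shv{N})$ on affine $T$ by the first paragraph. Since both are Zariski sheaves on $\Sch/S$ and, over the affine $S$, every $T$ admits a cover by affine opens with affine structure morphism to $S$, the two presheaves agree on all $T$; hence $\Homstk_{\Orb_X/S}(\shv{M},\shv{N})\cong\Van(\shv{Q})$ is representable by an abelian cone, which is affine over $S$. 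For the final assertion, suppose in addition that $\shv{M}$ is of finite presentation. Here I would use noetherian approximation to descend $f$, $\shv{M}$ and $\shv{N}$ to finite-presentation data over a noetherian affine $S_0$, over which $F$ is corepresented by a \emph{coherent} module $\shv{Q}_0$ (the noetherian case of corepresentability, where left-exact coherent functors are corepresented by coherent modules); the compatibility of the construction with base change (the flat clause of Theorem~\ref{mainthms:coh_stk_dual}) then identifies $\shv{Q}$ with the pullback of $\shv{Q}_0$, so $\shv{Q}$ is of finite presentation, $\operatorname{Sym}_{\Orb_S}\shv{Q}$ is a finitely presented $\Orb_S$-algebra, and $\Van(\shv{Q})\to S$ is of finite presentation. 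Controlling the coherence of $\shv{Q}$ over a non-noetherian $S$ via this approximation is the main technical obstacle in the finite-presentation statement.
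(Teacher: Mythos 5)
Your overall route is the paper's own: reduce to affine $S$ via the sheaf property (the paper uses the \'etale topology, you use Zariski gluing---either works), identify the value on an affine $T\xrightarrow{\tau}S$ with $F(\tau_*\Orb_T)$ where $F=\Hom_{\Orb_X}(\shv{M},\shv{N}\tensor^{\LDERF}_{\Orb_X}\LDERF\QCPBK{f}(-))$ is the coherent functor of Theorem~\ref{mainthms:cohstk_flat}, observe that flatness of $\shv{N}$ makes $F$ left exact, corepresent $F$ by some $\shv{Q}\in\QCOH{S}$, and take $\underline{\spec}_{\Orb_S}\mathrm{Sym}^\bullet_{\Orb_S}\shv{Q}$. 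That is exactly the published proof, where corepresentability is Example~\ref{ex:left_exact_coh}: a coherent functor preserves small products (Krause), so a left-exact coherent functor preserves small limits and Eilenberg--Watts applies. However, your ``concrete'' justification of this step is false. If $\shv{K}_1\xrightarrow{\phi}\shv{K}_2$ is a presentation, i.e.\ $F(\shv{I})\cong\coker\bigl(\Hom_{\Orb_S}(\shv{K}_2,\shv{I})\to\Hom_{\Orb_S}(\shv{K}_1,\shv{I})\bigr)$, then $\Hom_{\Orb_S}(\coker\phi,-)$ computes the \emph{kernel} of $\Hom_{\Orb_S}(\shv{K}_2,-)\to\Hom_{\Orb_S}(\shv{K}_1,-)$, not the cokernel, and there is not even a natural transformation $\Hom_{\Orb_S}(\coker\phi,-)\to F$ in general. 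Concretely, take $\phi:\Orb_S^{\oplus 2}\to\Orb_S$ the first projection: then $F\cong\Hom_{\Orb_S}(\Orb_S,-)$ is left exact, but $\coker\phi=0$. The corepresenting module is not extracted from a presentation by any such formula; you genuinely need the abstract limit-preservation argument (which, fortunately, you also cite, so the error is inessential and repairable by deleting the parenthetical).

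On the finite-presentation clause your plan (noetherian approximation) is sound and is what the paper's ``standard limit methods'' amounts to, but the tool you name is not quite right: the base-change clause of Theorem~\ref{mainthms:coh_stk_dual} compares the complexes $\cplx{E}_{\cplx{M},\shv{N}[0]}$ over two \emph{noetherian} bases admitting dualizing complexes, whereas over your general affine $S$ no such complex exists. What you actually need, after descending to $f_0:X_0\to S_0$ with $S_0$ noetherian and corepresentor $\shv{Q}_0\in\COH{S_0}$, is the base-change isomorphism of Lemma~\ref{lem:base_change} in the form $F_S(\shv{I})\cong F_{S_0}(g_*\shv{I})$, combined with adjunction $\Hom_{\Orb_{S_0}}(\shv{Q}_0,g_*\shv{I})\cong\Hom_{\Orb_S}(g^*\shv{Q}_0,\shv{I})$; this exhibits $F_S\cong\Hom_{\Orb_S}(g^*\shv{Q}_0,-)$, whence $\shv{Q}\cong g^*\shv{Q}_0$ is of finite presentation. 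Alternatively, the addendum to Example~\ref{ex:left_exact_coh} gives finite presentation of $\shv{Q}$ directly once one checks that $F$ preserves direct limits when $\shv{M}$ is of finite presentation.
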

We wish to emphasize that Theorem \ref{mainthms:bdd_hom_shf} is
completely elementary once Theorem \ref{mainthms:cohstk_flat} is
known. Using different techniques to that employed in Theorem
\ref{mainthms:cohstk_flat}, we prove a coherence result when nothing
is assumed to be flat (at the expense of making the diagonal finite). 
\begin{mainthm}\label{mainthms:cohstk_noeth_fd}
  Fix an affine and noetherian scheme $S$, a morphism of algebraic
  stacks $f : X \to S$ which is locally of finite type with finite
  diagonal, $\cplx{M} \in \DQCOH(X)$, and $\cplx{N}
  \in \DCOH^b(X)$. If $\cplx{N}$ has properly supported cohomology sheaves 
  over $S$, then the functor: 
  \[
  \Hom_{\Orb_X}(\cplx{M},\cplx{N}
  \tensor_{\Orb_X}^\LDERF \LDERF \QCPBK{f}(-)) : \QCOH{S} \to \AB
  \]
  is coherent. 
\end{mainthm}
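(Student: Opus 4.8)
The plan is to exploit the formal properties of Auslander's coherent functors recalled above: they form an abelian category closed under extensions and arbitrary inverse limits (in particular under products, kernels, and cokernels), and every functor corepresentable by a complex is coherent. Write $S = \spec A$. The strategy is a three-fold dévissage: first reduce $\cplx{N}$ to a single properly supported coherent sheaf; then treat the case where $\cplx{M}$ is perfect by means of duality and the projection formula, where the functor becomes corepresentable by a complex; and finally descend from perfect $\cplx{M}$ to an arbitrary object of $\DQCOH(X)$ by writing the latter as a homotopy colimit of perfect complexes and invoking closure under inverse limits. Throughout, $\Hom_{\Orb_X}$ denotes $\Hom_{\DQCOH(X)} = \COHO{0}\SRHom_{\Orb_X}$.

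For the first reduction I would induct on the cohomological amplitude of $\cplx{N} \in \DCOH^b(X)$. In the base case $\cplx{N} \homotopic \shv{N}[j]$, replacing $\cplx{M}$ by $\cplx{M}[j]$ reduces to $\cplx{N} \homotopic \shv{N}[0]$. For the inductive step, the truncation triangle $\trunc{\leq a}\cplx{N} \to \cplx{N} \to \trunc{\geq a+1}\cplx{N}$ remains a triangle after applying $(-)\tensor_{\Orb_X}^{\LDERF}\LDERF\QCPBK{f}\shv{I}$, and the resulting long exact sequence expresses the functor attached to $\cplx{N}$ in terms of those attached to the two truncations, evaluated on the various shifts $\cplx{M}[-q]$. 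Since each truncation has strictly smaller amplitude and each $\cplx{M}[-q]$ again lies in $\DQCOH(X)$, the inductive hypothesis applies; closure of coherent functors under kernels, cokernels, and extensions yields the claim for $\cplx{N}$. Hence it suffices to treat $\cplx{N} \homotopic \shv{N}[0]$ with $\shv{N}\in\COH{X}$ properly supported over $S$ and $\cplx{M}\in\DQCOH(X)$ arbitrary.

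Suppose first that $\cplx{M} \homotopic \cplx{P}$ is perfect. As $\cplx{P}$ is dualizable, there is a natural isomorphism $\SRHom_{\Orb_X}(\cplx{P}, \shv{N}\tensor_{\Orb_X}^{\LDERF}\LDERF\QCPBK{f}\shv{I}) \homotopic \SRHom_{\Orb_X}(\cplx{P},\shv{N})\tensor_{\Orb_X}^{\LDERF}\LDERF\QCPBK{f}\shv{I}$, and $\shv{G} := \SRHom_{\Orb_X}(\cplx{P},\shv{N}) \homotopic \cplx{P}^\vee\tensor_{\Orb_X}^{\LDERF}\shv{N}$ lies in $\DCOH^b(X)$ with support contained in $\supp\shv{N}$, hence proper over $S$. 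Applying the projection formula and using that $S$ is affine, the functor becomes
\[
\shv{I} \longmapsto \COHO{0}\bigl(\RDERF\QCPSH{f}(\shv{G})\tensor_{\Orb_S}^{\LDERF}\shv{I}\bigr),
\]
where, $S$ being affine, $\COHO{0}$ denotes the zeroth cohomology of the associated complex of $A$-modules. Here the finite diagonal enters decisively: since $\shv{G}$ has proper support, Grothendieck finiteness guarantees $\cplx{P}_S := \RDERF\QCPSH{f}(\shv{G}) \in \DCOH^+(S)$ (bounded below, with coherent cohomology in every degree, though possibly unbounded above owing to the infinite cohomological dimension of $X$). Representing $\cplx{P}_S$ by a bounded-below complex of finite free $A$-modules, only the three terms around degree zero contribute to $\COHO{0}(\cplx{P}_S\tensor_{\Orb_S}^{\LDERF}\shv{I})$; dualizing this finite piece exhibits the functor as corepresentable by an object of $\DCOH^-(S)$, and therefore coherent.

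For arbitrary $\cplx{M} \in \DQCOH(X)$ I would use that the finite---hence affine---diagonal forces $\DQCOH(X)$ to be compactly generated by perfect complexes, so that $\cplx{M} \homotopic \hocolim{n}\cplx{P}_n$ for a sequence of perfect complexes $\cplx{P}_n$. Writing $\cplx{C}_{\shv{I}} := \shv{N}\tensor_{\Orb_X}^{\LDERF}\LDERF\QCPBK{f}\shv{I}$, the telescope triangle yields a short exact sequence, natural in $\shv{I}$,
\[
0 \to {\varprojlim_n^{1}}\,\Hom_{\Orb_X}(\cplx{P}_n,\cplx{C}_{\shv{I}}[-1]) \to \Hom_{\Orb_X}(\cplx{M},\cplx{C}_{\shv{I}}) \to {\varprojlim_n}\,\Hom_{\Orb_X}(\cplx{P}_n,\cplx{C}_{\shv{I}}) \to 0.
\]
Each functor $\shv{I}\mapsto\Hom_{\Orb_X}(\cplx{P}_n,\cplx{C}_{\shv{I}})$ and $\shv{I}\mapsto\Hom_{\Orb_X}(\cplx{P}_n,\cplx{C}_{\shv{I}}[-1]) = \Hom_{\Orb_X}(\cplx{P}_n[1],\cplx{C}_{\shv{I}})$ is coherent by the perfect case. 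The product over $n$ of coherent functors is coherent (closure under inverse limits), and $\varprojlim_n$ and $\varprojlim_n^{1}$ are respectively the kernel and cokernel of the endomorphism $1-\mathrm{shift}$ of this product, hence coherent; closure under extensions then shows the middle functor is coherent, as desired. The main obstacle is precisely this last passage: establishing the perfect approximation $\cplx{M}\homotopic\hocolim{n}\cplx{P}_n$ together with the Grothendieck finiteness of the a priori unbounded pushforward $\RDERF\QCPSH{f}(\shv{G})$---it is here, rather than in any formal manipulation, that the finiteness of the diagonal is indispensable.
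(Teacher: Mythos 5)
Your proposal runs aground at the final passage, and the justification you give for it is false: an affine diagonal does \emph{not} force $\DQCOH(X)$ to be compactly generated by perfect complexes. In positive characteristic this already fails for $B\mathbb{G}_a$ (the very pathology the paper flags after Lemma \ref{lem:lc_der_cat}); and on a non-concentrated stack such as $B(\mathbb{Z}/p)_{\mathbb{F}_p}$---proper with finite diagonal, hence squarely within the hypotheses of the theorem---perfect complexes need not even be compact, since $\RDERF\Gamma$ has infinite cohomological dimension and fails to commute with arbitrary coproducts. A correct statement does exist (quasi-compact stacks with quasi-finite separated diagonal have $\DQCOH$ compactly generated by perfects, a later theorem of Hall--Rydh), but it is a deep result, not a formal consequence of finiteness of the diagonal; it moreover requires $X$ quasi-compact, whereas your $f$ is only locally of finite type, so you would first need to replace $X$ by a closed substack proper over $S$ carrying $\cplx{N}$---the reduction $\cplx{N}\cong i_*\shv{N}_0$ that the paper performs first and you omit. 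Even granting compact generation, $\cplx{M}\homotopic\hocolim{n}\cplx{P}_n$ with each $\cplx{P}_n$ perfect is false in general: the standard construction yields a telescope whose terms are extensions of (possibly uncountable) coproducts of perfects. That last defect is repairable via closure of coherent functors under products, but the generation input itself is not available by your argument.

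The perfect case has a second genuine gap. Since $\shv{N}$ is not flat over $S$, the complex $\shv{N}\tensor_{\Orb_X}^{\LDERF}\LDERF\QCPBK{f}\shv{I}$ is in general unbounded below, and your identification of the functor with $\shv{I}\mapsto\COHO{0}(\RDERF\QCPSH{f}(\shv{G})\tensor_{\Orb_S}^{\LDERF}\shv{I})$ invokes $\RDERF\QCPSH{f}$ and the projection formula for a \emph{non-representable} morphism on unbounded-below objects; in the paper's framework these exist only on $\DQCOH^+$ for such $f$, and \eqref{eq:projection_formula} is stated only for representable morphisms. For non-concentrated $f$ the needed commutation of $\RDERF f_*$ with the relevant unbounded homotopy colimits is exactly what infinite cohomological dimension obstructs---this is why the paper stresses that the present theorem is independent of Theorem \ref{mainthms:coh_stk_dual}, and why its proof is geometric rather than duality-theoretic: reduce to $f$ proper with $\cplx{N}\homotopic\shv{N}[0]$; argue by noetherian induction and d\'evissage on $\COH{X}$; use Olsson's Chow lemma to produce a projective, generically finite cover $p:X'\to X$, the finite diagonal serving only to make $X''=X'\times_X X'$ projective as well (not to produce compact generators); settle the projective \emph{scheme} case by resolving $\cplx{M}$ by sums of line bundles, where the projection formula is legitimate and Example \ref{ex:noetheriantorext} applies; and descend along the equalizer $\shv{N}\to\mathrm{eq}(p_*p^*\shv{N}\rightrightarrows q_*q^*\shv{N})$, which is generically an isomorphism by flat descent. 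Your corepresentability route is precisely the one that the non-flatness of $\shv{N}$ blocks.
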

We wish to emphasize that Theorem \ref{mainthms:cohstk_noeth_fd} is
 independent of Theorem \ref{mainthms:coh_stk_dual}.
\subsection*{Relation with other work}
In \cite{hallj_openness_coh}, coherent functors featured prominently
in a criteria for algebraicity of stacks. Thus Theorem
\ref{mainthms:cohstk_flat} can be used to show that certain stacks are
algebraic \opcit[, \S\S8--9]. Theorem \ref{mainthms:bdd_hom_shf} can
also be used to show that many algebraic stacks of interest have affine
diagonals \cite[\S\S8-9]{hallj_openness_coh}---generalizing and
simplifying the existing work of M.~Olsson
\cite[Prop.~5.10]{MR2239345}, M.~Lieblich \cite[Prop.~2.3]{MR2233719},
and M.~Roth and J.~Starr \cite[Thm.~2.1]{2009arXiv0908.0096R}. In
\cite{hallj_dary_g_hilb_quot}, Theorem \ref{mainthms:bdd_hom_shf} is
combined with the absolute approximation results of D. Rydh
\cite{rydh-2009} to show that Hilbert stacks and Quot spaces exist
without finiteness assumptions.

Results of A. Grothendieck \cite[III.7.7.8-9]{EGA}, A. Altman and
S. Kleiman \cite[1.1]{MR555258}, and S. Brochard \cite[Prop.\
A.4.3]{MR2871149} demonstrate that Theorem \ref{mainthm:cohobc} holds
when $f$ is proper, tame, and flat, and $\shv{M}$
is flat over $S$ as well as being the cokernel of a map vector bundles. For
example, if $f$ is projective, or more generally, $X$ is tame and has
the resolution property, Theorem \ref{mainthm:cohobc} is known to hold
for any coherent sheaf $\shv{M}$ which is flat over $S$.

After completing this paper we also located in the literature two very nice
papers of H.~Flenner addressing similar results for analytic
spaces. In particular, if $S$ is excellent and of finite Krull
dimension, then Theorem \ref{mainthm:cohobc} follows from the results
of \cite[\S7]{MR638811} and Theorems 
\ref{mainthms:coh_stk_dual} and \ref{mainthms:bdd_hom_shf} are the main results of \cite{MR641823}
(this is in the analytic category, however, thus $\shv{M}$ is
assumed to be coherent in Theorem \ref{mainthms:bdd_hom_shf}). Without
further assumptions on $f$ and $\cplx{M}$ (e.g. $\cplx{M}$ and $X$ are
flat over $S$ when $q\geq 1$ \cite[\S1]{MR555258}) we cannot see how
Theorem \ref{mainthm:cohobc} can be easily reduced to the case where
$S$ meets Flenner's hypotheses. In fact, we use coherent functors to
accomplish this descent, which is effectively the content of Theorem
\ref{mainthms:cohstk_flat}. This has no counterpart in the analytic
category where everything is excellent, coherent, and admits a
dualizing complex. We do not believe that Theorem
\ref{mainthms:cohstk_noeth_fd} has been considered previously.
\subsection*{Assumptions, conventions, and notations}
For a scheme $T$, denote by $|T|$ the underlying topological space
(with the Zariski topology) and $\Orb_T$ the (Zariski) sheaf of rings
on $|T|$. For $t\in |T|$, let $\kappa(t)$ denote the residue field.
Denote by $\QCOH{T}$ (resp.\ $\COH{T}$) the abelian category of
quasicoherent (resp.\ coherent) sheaves on the scheme $T$. Let
$\SCH{T}$ denote the category of schemes over $T$. The big \'etale
site over $T$ will be denoted by $(\SCH{T})_\Et$.  

For a ring $A$ and an $A$-module $M$, denote the quasicoherent
$\Orb_{\spec A}$-module associated to $M$ by $\widetilde{M}$. Denote
the abelian category of all (resp.\ coherent) $A$-modules by $\MOD{A}$
(resp.\ $\COH{A}$). 

We will assume throughout that all schemes, algebraic spaces, and
algebraic stacks have quasicompact and quasiseparated diagonals. 
\subsection*{Acknowledgements}  
I would like to thank R. Ile and D. Rydh for several interesting and
supportive discussions.  
\tableofcontents
\section{Derived categories of sheaves on algebraic stacks}
In this section, we review derived categories of sheaves on algebraic
stacks. Fix an algebraic stack $X$. We take $\MOD{X}$ (resp.~$\QCOH{X}$) to
denote the abelian category of $\Orb_X$-modules (resp.~quasicoherent
$\Orb_X$-modules) on the lisse-\'etale site of $X$
\cite[12.1]{MR1771927}. Take $\DCAT(X)$ (resp.~$\DQCOH(X)$) to denote
the unbounded derived category of $\MOD{X}$ (resp.~the full
subcategory of $\DCAT(X)$ with cohomology in $\QCOH{X}$). Superscripts
such as $+$, $-$, $\geq n$, and $b$ decorating $\DCAT(X)$ and
$\DQCOH(X)$ should be interpreted as usual. In addition, if $X$ is
locally noetherian, one may consider the category of coherent sheaves
$\COH{X}$ and the derived category $\DCOH(X)$. 

If $X$ is a Deligne--Mumford stack, there is an
associated small \'etale site. We take $\MOD{X_{\et}}$
(resp.~$\QCOH{X_{\et}}$) to denote the abelian category of
$\Orb_{X_{\et}}$-modules (resp.~quasicoherent
$\Orb_{X_{\et}}$-modules). There are naturally induced morphisms of
abelian categories $\MOD{X} \to \MOD{X_{\et}}$ and $\QCOH{X} \to
\QCOH{X_{\et}}$. Set $\DQCOH(X_{\et})$ to be the triangulated category
$\DCAT_{\QCOH{X_{\et}}}(\MOD{X_{\et}})$. Then, the natural functor
$\DQCOH(X) \to \DQCOH(X_{\et})$ is an equivalence of
categories. If $X$ is a scheme, the corresponding statement for the
Zariski site is also true.

For generalities on
unbounded derived categories on ringed sites, we refer the reader to
\cite[\S18.6]{MR2182076}. We now record for future reference some useful
formulae. If $\cplx{M}$ and $\cplx{N} \in \DCAT(X)$,
then there is the derived tensor product $\cplx{M}
\tensor^{\LDERF}_{\Orb_X} \cplx{N} \in \DCAT(X)$, the derived sheaf
Hom functor $\SRHom_{\Orb_X}(\cplx{M},\cplx{N}) \in \DCAT(X)$ and the
derived global Hom functor $\RHom_{\Orb_X}(\cplx{M},\cplx{N}) \in
\DCAT(\AB)$. For all $\cplx{P}\in\DCAT(X)$ we have a functorial isomorphism:
\begin{equation}
  \Hom_{\Orb_X}(\cplx{M} \tensor^{\LDERF}_{\Orb_X} \cplx{N}, \cplx{P})
  \cong
  \Hom_{\Orb}(\cplx{M},\SRHom_{\Orb_X}(\cplx{N},\cplx{P})),\label{eq:ghomadj}  
\end{equation}
as well as a functorial quasi-isomorphism: 
\begin{equation}
  \SRHom_{\Orb_X}(\cplx{M} \tensor^{\LDERF}_{\Orb_X} \cplx{N},
  \cplx{P}) \homotopic
  \SRHom_{\Orb_X}(\cplx{M},\SRHom_{\Orb_X}(\cplx{N},\cplx{P}))\label{eq:lhomadj}. 
\end{equation}
Set $\RDERF \Gamma(X,-)  = \RHom_{\Orb_X}(\Orb_X,-)$, then there
is also a natural quasi-isomorphism:
\begin{equation}
  \RHom_{\Orb_X}(\cplx{M},\cplx{N}) \homotopic \RDERF \Gamma
  \SRHom_{\Orb_X}(\cplx{M},\cplx{N}). \label{eq:lghom} 
\end{equation}
If $\cplx{M}$, $\cplx{N}\in \DQCOH(X)$,
then $\cplx{M}\tensor^{\LDERF}_{\Orb_X}\cplx{N} \in \DQCOH(X)$. If
$X$ is locally noetherian and $\cplx{M}$, $\cplx{N} \in \DCOH^-(X)$,
then $\cplx{M}\tensor^{\LDERF}_{\Orb_X}\cplx{N} \in
\DCOH^-(X)$. Also, if $X$ is locally noetherian and $\cplx{M} \in
\DCAT^-(X)$ and $\cplx{N} \in \DQCOH^+(X)$ (resp.~$\DCOH^+(X)$), then
$\SRHom_{\Orb_X}(\cplx{M},\cplx{N}) \in \DQCOH^+(X)$
(resp.~$\DCOH^+(X)$). These results are all
consequences of \cite[\S6]{MR2312554} and \cite[\S2]{MR2434692}.

Fix a morphism of algebraic stacks $f : X \to Y$. We let $\RDERF f_* :
\DCAT(X) \to \DCAT(Y)$ denote the derived functor of $f_* : \MOD{X}
\to \MOD{Y}$. For $\cplx{M}$, $\cplx{N} \in \DCAT(X)$ and $q\in \Z$, set: 
\[
\SRExt{q}{f}{\cplx{M}}{\cplx{N}} := \COHO{q}(\RDERF
f_*\SRHom_{\Orb_X}(\cplx{M},\cplx{N})). 
\]
If the morphism $f$ is quasicompact and quasiseparated,
then the restriction of $\RDERF f_*$ to $\DQCOH^+(X)$ induces a
functor $\RDERF \QCPSH{f} : \DQCOH^+(X) \to \DQCOH^+(Y)$
\cite[Lem.~6.20]{MR2312554}.  If $X$ and $Y$ are Deligne--Mumford stacks, then
the restriction of $\RDERF f_*$ to $\DQCOH^+(X)$ coincides with the
restriction of the derived functor of $(f_{\et})_{*} : \MOD{X_{\et}} \to
\MOD{Y_{\et}}$ to $\DQCOH(X_{\et})$. A consequence of
\cite[Ex.~2.1.11]{MR2434692} is that if in addition $f$ is representable, then the
restriction of $\RDERF f_*$ to $\DQCOH(X)$ induces a functor $\RDERF
\QCPSH{f} : \DQCOH(X) \to \DQCOH(Y)$. 

A morphism of algebraic stacks $f : X \to Y$, however, does not
necessarily induce a left exact morphism of corresponding
lisse-\'etale sites \cite[5.3.12]{MR1963494}, thus the construction of
the correct derived functors of $f^* : \QCOH{Y} \to \QCOH{X}$ is
somewhat subtle. There are currently two approaches to constructing
these functors. The first, due to M.~Olsson \cite{MR2312554} and
Y.~Laszlo and M.~Olsson \cite{MR2434692}, uses cohomological
descent. The other approach appears in the Stacks Project
\cite{stacks-project}. The latter approach is more widely applicable,
but uses a completely different formulation (big sites) and requires
significant amounts of technology that many people may not be familiar
with. 

In this article, where there is always some finiteness at hand,
the approach of Olsson and Laszlo--Olsson is sufficient,
thus is the method that we will employ. In any case, there exists a
functor $\LDERF \QCPBK{f} :\DQCOH(Y) \to \DQCOH(X)$ such that
$\COHO{0}(\LDERF \QCPBK{f} \shv{I}[0]) \cong f^*\shv{I}$, whenever
$\shv{I} \in \QCOH{Y}$. In addition, if $f$ is quasicompact and quasiseparated, $\cplx{I}
\in \DQCOH(Y)$, and $\cplx{N} \in \DQCOH^+(X)$, then there is a
natural isomorphism:
\begin{equation}
\Hom_{\Orb_Y}(\cplx{I},\RDERF \QCPSH{f}
\cplx{N}) \cong \Hom_{\Orb_X}(\LDERF \QCPBK{f}
\cplx{I},\cplx{N}).\label{eq:global_trivial_duality}  
\end{equation}
Thus, $\LDERF \QCPBK{f}$ is the left adjoint of $\RDERF f_*$.  
In the situation where $X$ and $Y$ are Deligne--Mumford stacks, there
also exists a derived functor $\LDERF f_{\et}^* : \DQCOH(Y_{\et}) \to
\DQCOH(X_{\et})$. The restriction of $\LDERF f_{\et}^*$ to
$\DQCOH(Y_{\et})$ coincides with $\LDERF
\QCPBK{f}$. 

If $f : X \to Y$  is a quasicompact morphism of locally
noetherian algebraic stacks, $\cplx{I} \in \DCOH^-(Y)$, and $\cplx{N}
\in \DQCOH^+(X)$, the isomorphism \eqref{eq:global_trivial_duality} is
readily strengthened to a natural quasi-isomorphism
in $\DQCOH^+(Y)$:
\begin{equation}
  \label{eq:sheaf_trivial_duality}
  \SRHom_{\Orb_Y}(\cplx{I},\RDERF \QCPSH{f}\cplx{N}) \homotopic \RDERF
  f_* \SRHom_{\Orb_X}(\LDERF \QCPBK{f} \cplx{I},\cplx{N}). 
\end{equation}
 
If $f : X \to Y$ is a representable and quasicompact morphism of
algebraic stacks, then the isomorphism
\eqref{eq:global_trivial_duality} can be extended to all $\cplx{N} \in
\DQCOH(X)$. In this situation---which is covered in 
\cite{perfect_complexes_stacks}, generalizing
\cite[Prop.~5.3]{MR1308405}---there is also the projection formula, 
which gives a functorial quasi-isomorphism for all $\cplx{N} \in \DQCOH(X)$
and $\cplx{I} \in \DQCOH(Y)$:
\begin{equation}
  \label{eq:projection_formula}
  (\RDERF \QCPSH{f}\cplx{N})\tensor^{\LDERF}_{\Orb_Y} \cplx{I}\homotopic
  \RDERF \QCPSH{f}(\cplx{N} \tensor^{\LDERF}_{\Orb_X} \LDERF\QCPBK{f}\cplx{I}).
\end{equation}
For a
morphism of algebraic stacks $f : X \to Y$ a complex $\cplx{N} \in
\DQCOH^-(X)$ has \fndefn{finite tor-dimension over $Y$} if there
exists a non-negative integer $n$ such that for all $i\in \Z$ and $\cplx{I} \in \DQCOH^{\geq
  i}(Y)$ we have that $\cplx{N}\tensor_{\Orb_X}^\LDERF \LDERF
\QCPBK{f}\cplx{I} \in \DQCOH^{\geq i-n}(X)$. We conclude this section with
the following easily proven lemma.
\begin{lem}\label{lem:lp_stk_hom}
  Fix an affine noetherian scheme $S$ and a morphism of noetherian
  algebraic stacks $f : X \to S$. Let
  $\cplx{M} \in  
  \DCOH^-(X)$ and $\cplx{N} \in
  \DCOH^b(X)$. If $\cplx{N}$ has finite tor-dimension over $S$, 
  then the following functor preserves filtered colimits:
  \[
  \Hom_{\Orb_X}(\cplx{M},\cplx{N}
  \tensor_{\Orb_X}^{\LDERF } \LDERF \QCPBK{f}(-)) : \QCOH{S} \to \AB.
  \]
\end{lem}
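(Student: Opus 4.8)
The plan is to reduce the statement to the well-known fact that on a noetherian affine scheme, the functors $\Hom$ and $\Ext$ out of a bounded-above coherent complex commute with filtered colimits, and then propagate this through the pushforward $\RDERF\QCPSH{f}$ and the derived tensor product. The key observation is that the functor in question can be rewritten using the adjunctions \eqref{eq:ghomadj} and \eqref{eq:lghom}. Concretely, for $\shv{I} \in \QCOH{S}$ we have
\[
\Hom_{\Orb_X}(\cplx{M},\cplx{N}\tensor_{\Orb_X}^{\LDERF}\LDERF\QCPBK{f}\shv{I})
\cong \Hom_{\Orb_S}\bigl(\RDERF\QCPSH{f}\SRHom_{\Orb_X}(\cplx{M},\cplx{N}),\shv{I}\bigr),
\]
at least when the bracketed complex on the right is a bounded-above complex of coherent $\Orb_S$-modules; the first step is therefore to establish such a rewriting. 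I would use the projection formula \eqref{eq:projection_formula} together with \eqref{eq:lhomadj} to move $\LDERF\QCPBK{f}\shv{I}$ out past $\SRHom_{\Orb_X}(\cplx{M},-)$, landing on $\RDERF\QCPSH{f}$ of an internal Hom tensored with a pullback, and then apply the global adjunction \eqref{eq:global_trivial_duality}.

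First I would verify that the complex $\cplx{P} := \RDERF\QCPSH{f}\SRHom_{\Orb_X}(\cplx{M},\cplx{N})$ lies in $\DCOH^-(S)$. Since $S$ is affine noetherian and $f$ is a morphism of noetherian stacks, and $\cplx{M}\in\DCOH^-(X)$, $\cplx{N}\in\DCOH^b(X)$, the internal Hom $\SRHom_{\Orb_X}(\cplx{M},\cplx{N})$ lies in $\DQCOH^+(X)$ by the boundedness results recorded after \eqref{eq:lghom}; the finite tor-dimension hypothesis on $\cplx{N}$ over $S$ is what bounds $\cplx{P}$ from above after applying $\RDERF\QCPSH{f}$, and properness is not assumed here, so I would need to argue coherence of the cohomology sheaves locally or simply carry $\DQCOH^-(S)$ if coherence is not automatic. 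The essential point is that the finite tor-dimension condition guarantees $\cplx{N}\tensor^{\LDERF}\LDERF\QCPBK{f}\shv{I}$ stays in a fixed bounded-below range independent of $\shv{I}$, so that after the adjunction the problem becomes $\shv{I}\mapsto\Hom_{\DCAT(S)}(\cplx{P},\shv{I}[0])$.

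The final step is to show that $\shv{I}\mapsto\Hom_{\DCAT(S)}(\cplx{P},\shv{I}[0])$ preserves filtered colimits for $\cplx{P}\in\DCOH^-(S)$ and $\shv{I}$ ranging over $\QCOH{S}$. Since $S=\spec A$ is affine noetherian, I would represent $\cplx{P}$ by a bounded-above complex of finite free (or finitely generated projective) $A$-modules, truncate so that only finitely many cohomological degrees contribute to $\Hom$ into a sheaf concentrated in degree zero, and then use that each $\Hom_{\Orb_S}(\Orb_S^{\oplus n},-)$ trivially commutes with filtered colimits, as does the formation of the single relevant cohomology group of a bounded complex. The hard part is the first paragraph's reduction: pinning down exactly which adjunction isomorphisms apply in the unbounded stacky setting and checking that the finite tor-dimension hypothesis supplies the boundedness needed to invoke \eqref{eq:global_trivial_duality} (which requires the target of $\RDERF\QCPSH{f}$ to sit in $\DQCOH^+$) while keeping $\cplx{M}$ unbounded-below under control through $\SRHom_{\Orb_X}(\cplx{M},-)$.
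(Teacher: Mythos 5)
Your central rewriting is false, and the whole plan rests on it. There is no adjunction that places $\RDERF\QCPSH{f}$ in the \emph{contravariant} slot of a Hom: \eqref{eq:global_trivial_duality} gives $\Hom_{\Orb_S}(\shv{I},\RDERF\QCPSH{f}(-))\cong\Hom_{\Orb_X}(\LDERF\QCPBK{f}\shv{I},-)$, which is the wrong variance, and upgrading it to $\Hom_{\Orb_S}(\RDERF\QCPSH{f}(-),\shv{I})$ is precisely Grothendieck duality, i.e.\ Theorem \ref{mainthms:coh_stk_dual} --- which requires $f$ proper and a dualizing complex (neither assumed in this lemma), and whose corepresenting object is not $\RDERF\QCPSH{f}\SRHom_{\Orb_X}(\cplx{M},\cplx{N})$ but its $\cplx{K}$-twisted dual. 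A minimal counterexample to your displayed isomorphism: take $X=S=\spec A$, $f=\mathrm{id}$, $\cplx{M}=\Orb_S[0]$, $\cplx{N}=\shv{N}[0]$ with $\shv{N}$ coherent and not projective; the left side is $\shv{I}\mapsto \shv{N}\tensor_{\Orb_S}\shv{I}$ (right exact), while your right side, with $\cplx{P}=\SRHom_{\Orb_S}(\Orb_S,\shv{N})=\shv{N}$, is $\shv{I}\mapsto\Hom_{\Orb_S}(\shv{N},\shv{I})$ (left exact). The intermediate move also fails on its own: $\SRHom_{\Orb_X}(\cplx{M},\cplx{N}\tensor^{\LDERF}_{\Orb_X}\LDERF\QCPBK{f}\shv{I})\homotopic\SRHom_{\Orb_X}(\cplx{M},\cplx{N})\tensor^{\LDERF}_{\Orb_X}\LDERF\QCPBK{f}\shv{I}$ requires $\cplx{M}$ perfect, not merely in $\DCOH^-(X)$ (already for $X=S$ affine, $\Ext^i_A(M,I)\neq\Ext^i_A(M,A)\tensor_A I$ in general). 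Your own boundedness worry confirms the problem: $\SRHom_{\Orb_X}(\cplx{M},\cplx{N})\in\DCOH^+(X)$, so $\cplx{P}\in\DQCOH^{+}(S)$, bounded \emph{below}, and without properness its cohomology need not be coherent --- there is no $\cplx{P}\in\DCOH^-(S)$ to feed into your (otherwise fine) last step. Finally, note the circularity: this lemma is \emph{used} in the proof of Theorem \ref{mainthms:coh_stk_dual} to pass from $\DCOH^+(S)$ to $\DQCOH^+(S)$, so any proof routed through a corepresentability statement of that type is inadmissible here.

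The intended elementary argument avoids corepresentability entirely. Finite tor-dimension gives a single $n$ with $\cplx{N}\tensor^{\LDERF}_{\Orb_X}\LDERF\QCPBK{f}\shv{I}\in\DQCOH^{\geq -n}(X)$ for \emph{all} $\shv{I}\in\QCOH{S}$; by t-structure orthogonality, $\Hom_{\Orb_X}(\cplx{M},\cplx{D})\cong\Hom_{\Orb_X}(\trunc{\geq -n}\cplx{M},\cplx{D})$ for any $\cplx{D}\in\DQCOH^{\geq -n}(X)$, so one may replace $\cplx{M}$ by a bounded complex with coherent cohomology. D\'evissage along the triangles on cohomology sheaves of $\cplx{M}$ (the five lemma plus exactness of filtered colimits in $\AB$) reduces to $\cplx{M}=\shv{M}[j]$ with $\shv{M}\in\COH{X}$. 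Then one concludes from three standard facts: $\LDERF\QCPBK{f}$ and $\tensor^{\LDERF}$ commute with filtered colimits; $\SExt^i_{\Orb_X}(\shv{M},-)$ commutes with filtered colimits of quasicoherent sheaves on the noetherian stack $X$; and, via \eqref{eq:lghom} and the hypercohomology spectral sequence, cohomology of the quasicompact and quasiseparated stack $X$ commutes with filtered colimits of complexes uniformly bounded below --- the uniform bound being exactly what the finite tor-dimension hypothesis supplies. Your final paragraph is essentially this last verification carried out on $S$ instead of $X$; the fix is to run it on $X$ directly rather than trying to descend to a corepresenting complex on $S$ first.
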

We now briefly review homotopy limits in a triangulated category
$\mathcal{T}$ admitting countable products. Fix for each $i\geq 0$ a
morphism in $\mathcal{T}$, $t_i : T_{i+1} \to 
T_i$. Set $t : \Pi_{i\geq 0} T_i \to \Pi_{i\geq 0} T_i$ to be
the composition of the product of the morphisms $t_i$ with the
projection $\Pi_{i\geq 0} T_i \to \Pi_{i\geq 1} T_i$. We define
$\holim{i} T_i$ via the following distinguished triangle:
\[
\xymatrix{\holim{i} T_i \ar[r] & \prod_{i\geq 0} T_i
  \ar[rr]^{\ID{}-t} & & \prod_{i\geq 0} T_i}.
\]
The category of lisse-\'etale $\Orb_X$-modules is a Grothendieck
abelian category, thus $\DCAT(X)$ admits small products. Moreover, we wish to 
point out that the functors $\RDERF \Gamma(X,-)$,
$\SRHom_{\Orb_X}(\cplx{M},-)$ for $\cplx{M} \in \DCAT(X)$, and $\RDERF
f_*$ for a morphism of algebraic stacks $f :X \to Y$ all preserve 
homotopy limits because they preserve products. 

The following result is well-known and appears in
\cite[\spref{08IY}]{stacks-project} (albeit in a slightly different,
but equivalent formulation). 
\begin{lem}\label{lem:lc_der_cat}
  Let $X$ be an algebraic stack and fix $\cplx{N}\in \DCAT(X)$. Then,
  the projections $\cplx{N} \to \trunc{\geq -i}\cplx{N}$ induce a
  non-canonical morphism:
  \[
  \phi : \cplx{N} \to \holim{i} \trunc{\geq -i}\cplx{N}.
  \]
  If $\cplx{N} \in \DQCOH(X)$, then any such $\phi$ is a quasi-isomorphism.
\end{lem}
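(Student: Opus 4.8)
The plan is to first construct the morphism $\phi$, and then verify it is a quasi-isomorphism on cohomology under the quasicoherence hypothesis. For the existence of $\phi$, recall the setup of homotopy limits recorded just above the statement: given the truncation morphisms $t_i : \trunc{\geq -(i+1)}\cplx{N} \to \trunc{\geq -i}\cplx{N}$, the object $\holim{i}\trunc{\geq -i}\cplx{N}$ sits in a distinguished triangle with $\prod_i \trunc{\geq -i}\cplx{N}$. The canonical projections $p_i : \cplx{N} \to \trunc{\geq -i}\cplx{N}$ assemble into a map $\cplx{N} \to \prod_i \trunc{\geq -i}\cplx{N}$, and since $t_i \opp p_{i+1} = p_i$ this composite is annihilated by $\ID{} - t$; the defining triangle then produces a (non-canonical, because lifts through the cone are only unique up to the image of the preceding $\Hom$-group) morphism $\phi$ fitting into the commuting triangles $p_i = (\text{proj}_i)\opp\phi$. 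This is a purely formal consequence of the axioms of a triangulated category admitting countable products, and $\DCAT(X)$ has such products since $\MOD{X}$ is Grothendieck abelian.

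For the quasi-isomorphism claim, I would pass to cohomology sheaves and argue degree by degree. Applying $\COHO{n}$ to the defining triangle of the homotopy limit gives a short exact sequence coming from the $\lim^1$--$\lim$ description: there is an exact sequence
\[
0 \to {\textstyle\varprojlim}^1_i\, \COHO{n-1}(\trunc{\geq -i}\cplx{N}) \to \COHO{n}(\holim{i}\trunc{\geq -i}\cplx{N}) \to {\textstyle\varprojlim}_i\, \COHO{n}(\trunc{\geq -i}\cplx{N}) \to 0.
\]
The point is now that for each fixed $n$ the inverse system $\{\COHO{n}(\trunc{\geq -i}\cplx{N})\}_i$ stabilizes: for $i \geq -n$ the truncation morphism induces an isomorphism $\COHO{n}(\trunc{\geq -i}\cplx{N}) \cong \COHO{n}(\cplx{N})$, since truncating below degree $-i$ does not affect cohomology in degree $n \geq -i$. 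Hence the system is eventually constant, its $\varprojlim$ is $\COHO{n}(\cplx{N})$, and the stabilization forces the transition maps to be eventually isomorphisms, so the Mittag-Leffler condition holds and the $\varprojlim^1$ term vanishes. Tracing through the compatibility $p_i = (\text{proj}_i)\opp\phi$ shows the resulting isomorphism $\COHO{n}(\holim{i}\trunc{\geq -i}\cplx{N}) \cong \COHO{n}(\cplx{N})$ is exactly $\COHO{n}(\phi)$, so $\phi$ is a quasi-isomorphism.

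The one subtlety I would flag is precisely where the hypothesis $\cplx{N}\in\DQCOH(X)$ enters, since the Mittag-Leffler argument above appears to work for arbitrary $\cplx{N}\in\DCAT(X)$. The issue is that on the lisse-\'etale site, exactness of the functor $\COHO{n}$ applied to a product, and the identification of $\COHO{n}$ of a homotopy limit via a genuine $\lim/\lim^1$ sequence of \emph{sheaves}, requires that the relevant products and inverse limits behave well cohomologically. The hypothesis $\cplx{N}\in\DQCOH(X)$ is what guarantees the truncations $\trunc{\geq -i}\cplx{N}$ and their product again lie in the part of the derived category where the cohomology sheaves are computed by a system that is well-behaved under $\varprojlim$ and $\varprojlim^1$ of quasicoherent sheaves; for a general $\Orb_X$-module complex the naive $\lim^1$ term need not vanish even when the cohomology systems stabilize, because products in $\MOD{X}$ are not exact in the way one needs. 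I therefore expect the genuine work to be in justifying the short exact sequence above sheaf-theoretically and confirming that the stabilization of cohomology suffices; the existence of $\phi$ and the termwise stabilization are routine, and the cited \cite[\spref{08IY}]{stacks-project} can be invoked to discharge the remaining verification.
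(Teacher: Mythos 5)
The paper gives no argument for this lemma beyond the citation to \cite[\spref{08IY}]{stacks-project}, so your proposal must be measured against the standard argument that tag encodes. Your construction of $\phi$ is correct and is the expected one, and your observation that each system $\{\COHO{n}(\trunc{\geq -i}\cplx{N})\}_i$ is eventually constant is also right. The gap is the displayed $\varprojlim$/$\varprojlim^1$ sequence itself: it presupposes that $\COHO{n}$ commutes with the countable products appearing in the triangle defining the homotopy limit, i.e.\ that $\COHO{n}(\prod_i \trunc{\geq -i}\cplx{N}) \cong \prod_i \COHO{n}(\trunc{\geq -i}\cplx{N})$. This fails in $\MOD{X}$: the lisse-\'etale module category is Grothendieck but not AB4*, countable products of sheaves are not exact, and so no sheaf-level Milnor sequence is available. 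Consequently the Mittag-Leffler argument on the (eventually constant) systems of cohomology sheaves proves nothing on its own --- which is consistent with your own correct remark that the naive argument would otherwise apply to arbitrary $\cplx{N} \in \DCAT(X)$, where the conclusion can fail. You flag exactly this issue in your final paragraph, but flagging is where your proof stops: the step where quasicoherence actually enters is asserted vaguely (``well-behaved under $\varprojlim$ and $\varprojlim^1$ of quasicoherent sheaves'') rather than proved, and as stated it is not the right mechanism --- the fix is not a sheaf-level $\lim^1$ statement at all.

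The missing idea is to evaluate on a generating family of objects of the lisse-\'etale site rather than to work with cohomology sheaves directly. For $U \to X$ smooth with $U$ an affine scheme, the functor $\Gamma(U,-)$ preserves products, hence $\RDERF\Gamma(U,-)$ preserves the defining triangle of the homotopy limit; so it suffices to show that
\[
\RDERF\Gamma(U,\cplx{N}) \to \holim{i}\,\RDERF\Gamma(U,\trunc{\geq -i}\cplx{N})
\]
is an isomorphism, and here the Milnor sequence lives in $\AB$, where countable products \emph{are} exact. Quasicoherence now supplies the needed uniform cohomological bound: $H^p(U_{\et},\COHO{q}(\cplx{N})) = 0$ for $p > 0$ since the cohomology sheaves are quasicoherent and $U$ is affine, so the hypercohomology spectral sequence degenerates, $\COHO{n}\RDERF\Gamma(U,\trunc{\geq -i}\cplx{N}) \cong \Gamma(U,\COHO{n}(\trunc{\geq -i}\cplx{N}))$ stabilizes in each degree as $i \to \infty$, and Mittag-Leffler applies legitimately. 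Since such $U$ generate the topos, $\phi$ induces isomorphisms on all cohomology sheaves. This is precisely \cite[\spref{08IY}]{stacks-project} with the bound $d(U)=0$; without some such local bound the statement genuinely fails, as the paper's remark on Neeman's example following the lemma is meant to emphasize. So: construction of $\phi$ correct, stabilization correct, but the central convergence step is replaced by an argument that would fail as written, with the true argument deferred entirely to the citation.
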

Note that the main result of \cite{2011arXiv1103.5539N}
produces---in positive characteristic---complexes $\cplx{N} \in
\DCAT(\QCOH{B\mathbb{N}_a})$ with the property that there are no quasi-isomorphisms: 
\[
\cplx{N} \to \holim{i} \trunc{\geq -i} \cplx{N}.
\]
We wish to emphasize that this does not contradict Lemma 
\ref{lem:lc_der_cat}. Indeed, while the categories
$\DCAT^+(\QCOH{B\mathbb{G}_a})$ and $\DQCOH^+(B\mathbb{G}_a)$ are
equivalent \cite[Thm.~3.8]{lurie_tannaka}, this equivalence does not
extend to the unbounded derived categories. 
\section{Corepresentability of $\SRHom$-functors}
In this section we prove Theorem \ref{mainthms:coh_stk_dual}. Before
we get to this we require the
following two easily proven lemmas. Our first lemma gives two
important maps used to prove Theorem \ref{mainthms:coh_stk_dual}.
\begin{lem}\label{lem:big_one}
  Fix a morphism of noetherian algebraic stacks $f : X \to S$. Let
  $\cplx{N}\in \DCOH^-(X)$ have finite tor-dimension over $S$,
  $\cplx{F} \in \DCOH^-(S)$, and $\cplx{G} \in \DQCOH^+(S)$. 
  \begin{enumerate}
  \item \label{item:big_one:A}  There is a natural morphism in $\DQCOH^+(S)$:
    \[
    \SRHom_{\Orb_S}(\cplx{F},\cplx{G}) \to \RDERF
    f_*\SRHom_{\Orb_X}(\cplx{N}\tensor_{\Orb_X}^\LDERF \LDERF
    \QCPBK{f}\cplx{F}, \cplx{N}\tensor_{\Orb_X}^\LDERF \LDERF
    \QCPBK{f}\cplx{G}).
    \]
  \item \label{item:big_one:B} There is a natural quasi-isomorphism in
    $\DQCOH^+(X)$:
    \[
    \cplx{N}\tensor_{\Orb_X}^{\LDERF} \LDERF
    \QCPBK{f}\SRHom_{\Orb_S}(\cplx{F},\cplx{G}) \homotopic
    \SRHom_{\Orb_X}(\LDERF \QCPBK{f}
    \cplx{F},\cplx{N}\tensor_{\Orb_X}^{\LDERF} \LDERF \QCPBK{f}
    \cplx{G}).
    \]
  \end{enumerate}
\end{lem}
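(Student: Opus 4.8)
The plan is to produce both morphisms formally from the adjunction \eqref{eq:global_trivial_duality}, the trivial-duality isomorphism \eqref{eq:sheaf_trivial_duality}, and the sheaf-level adjunction \eqref{eq:lhomadj}, and then to establish the quasi-isomorphism in \itemref{item:big_one:B} by descending to a statement about complexes of modules over a ring. For \itemref{item:big_one:A}, first note that the target lies in $\DQCOH^+(S)$: since $\cplx{N}$ has finite tor-dimension over $S$ and $\cplx{G}\in\DQCOH^+(S)$, the complex $\cplx{N}\tensor^\LDERF_{\Orb_X}\LDERF\QCPBK{f}\cplx{G}\in\DQCOH^+(X)$, so the inner $\SRHom_{\Orb_X}$ has bounded-above first argument and bounded-below second argument, hence lies in $\DQCOH^+(X)$, and its pushforward is bounded below. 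Rewriting this inner $\SRHom_{\Orb_X}$ by \eqref{eq:lhomadj} as $\SRHom_{\Orb_X}(\LDERF\QCPBK{f}\cplx{F},\SRHom_{\Orb_X}(\cplx{N},\cplx{N}\tensor^\LDERF_{\Orb_X}\LDERF\QCPBK{f}\cplx{G}))$ and then applying \eqref{eq:sheaf_trivial_duality} (legitimate, as $\cplx{F}\in\DCOH^-(S)$ and the second argument is in $\DQCOH^+(X)$), the target becomes $\SRHom_{\Orb_S}(\cplx{F},\RDERF\QCPSH{f}\SRHom_{\Orb_X}(\cplx{N},\cplx{N}\tensor^\LDERF_{\Orb_X}\LDERF\QCPBK{f}\cplx{G}))$. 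It therefore suffices to give a morphism $\cplx{G}\to\RDERF\QCPSH{f}\SRHom_{\Orb_X}(\cplx{N},\cplx{N}\tensor^\LDERF_{\Orb_X}\LDERF\QCPBK{f}\cplx{G})$ and apply $\SRHom_{\Orb_S}(\cplx{F},-)$; by \eqref{eq:global_trivial_duality} and \eqref{eq:lhomadj} such a morphism is the same datum as a map $\cplx{N}\tensor^\LDERF_{\Orb_X}\LDERF\QCPBK{f}\cplx{G}\to\cplx{N}\tensor^\LDERF_{\Orb_X}\LDERF\QCPBK{f}\cplx{G}$, and I take the identity. Naturality in $\cplx{F},\cplx{G}$ is clear from the construction, and this route applies $\RDERF\QCPSH{f}$ only to bounded-below objects, so it needs no representability of $f$.

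For \itemref{item:big_one:B} I would construct the natural map as the composite of $\ID{\cplx{N}}\tensor\theta$, where $\theta:\LDERF\QCPBK{f}\SRHom_{\Orb_S}(\cplx{F},\cplx{G})\to\SRHom_{\Orb_X}(\LDERF\QCPBK{f}\cplx{F},\LDERF\QCPBK{f}\cplx{G})$ is the canonical comparison map (adjoint to the pullback of the evaluation $\SRHom_{\Orb_S}(\cplx{F},\cplx{G})\tensor^\LDERF_{\Orb_S}\cplx{F}\to\cplx{G}$), and the tensor--Hom evaluation map $\cplx{N}\tensor^\LDERF_{\Orb_X}\SRHom_{\Orb_X}(\LDERF\QCPBK{f}\cplx{F},\LDERF\QCPBK{f}\cplx{G})\to\SRHom_{\Orb_X}(\LDERF\QCPBK{f}\cplx{F},\cplx{N}\tensor^\LDERF_{\Orb_X}\LDERF\QCPBK{f}\cplx{G})$. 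The assertion that this composite is a quasi-isomorphism is smooth-local on $X$ and on $S$, and $\LDERF\QCPBK{f}$, $\tensor^\LDERF$, and $\SRHom$ with pseudocoherent first argument all commute with flat pullback; choosing affine smooth presentations (and noting that coherence, boundedness, and finite tor-dimension over the base are preserved) thus reduces to a morphism of affine noetherian schemes $\spec B\to\spec A$, with $F$, $G$, $N$ the associated complexes of modules. Using the extension--restriction adjunction $\RHom_B(B\tensor^\LDERF_A F,-)\homotopic\RHom_A(F,-)$ together with $N\tensor^\LDERF_B B\tensor^\LDERF_A(-)\homotopic N\tensor^\LDERF_A(-)$, the map collapses to the canonical comparison
\[
N\tensor^\LDERF_A\RHom_A(F,G)\longrightarrow\RHom_A(F,N\tensor^\LDERF_A G),
\]
where $F$ is pseudocoherent and bounded above, $G$ is bounded below, and $N$ has finite tor-dimension over $A$.

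To prove this last map is a quasi-isomorphism, I would resolve $F$ by a bounded-above complex $P$ of finite free $A$-modules, so that both sides are computed by honest $\Hom$-complexes out of $P$. For each brutal truncation $\sigma^{\geq -m}P$, which is perfect, the comparison is the elementary tensor-reshuffling isomorphism, so it remains only to control the contribution of the tail $\sigma^{<-m}P$ in a fixed cohomological degree. This is exactly where the hypotheses enter. Since $G$ is bounded below and $N$ has finite tor-dimension over $A$, the complex $N\tensor^\LDERF_A G$ is bounded below, whence $\RHom_A(\sigma^{<-m}P,N\tensor^\LDERF_A G)$ is concentrated in arbitrarily high degrees and vanishes in any fixed degree once $m\gg 0$; and because $\sigma^{<-m}P$ sits in degrees $<-m$ while $G$ is bounded below, $\RHom_A(\sigma^{<-m}P,G)$ is likewise concentrated in arbitrarily high degrees, so tensoring it with the bounded-above, finite-tor-dimension complex $N$ again produces something vanishing in a fixed degree for $m\gg 0$. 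Hence in each cohomological degree both sides agree with the perfect-truncation computation, and the map is a quasi-isomorphism.

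The constructions in \itemref{item:big_one:A} and the reduction to the affine case are routine formal manipulations; the genuine content, and the step I expect to be the main obstacle, is the way-out bookkeeping of the previous paragraph---ensuring simultaneously on both sides that the tail of the merely bounded-above resolution $P$ contributes nothing in a fixed degree. It is precisely the finite tor-dimension of $\cplx{N}$, together with the lower boundedness of $\cplx{G}$, that forces this convergence; without the tor-dimension hypothesis the left-hand side need not even be bounded below and the comparison fails.
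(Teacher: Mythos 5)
Your proof is correct; note that the paper itself offers no argument for this lemma at all---it is introduced as one of ``two easily proven lemmas''---so there is no official proof to compare against, and your write-up effectively supplies the omitted details. Judged on its own terms, both halves are the natural arguments. For \itemref{item:big_one:A}, transposing the target through \eqref{eq:lhomadj} and \eqref{eq:sheaf_trivial_duality} and then taking the map adjoint to the identity of $\cplx{N}\tensor^{\LDERF}_{\Orb_X}\LDERF\QCPBK{f}\cplx{G}$ is exactly the expected construction, and you correctly check the hypotheses needed to invoke \eqref{eq:sheaf_trivial_duality}: the first argument $\cplx{N}\tensor^{\LDERF}_{\Orb_X}\LDERF\QCPBK{f}\cplx{F}$ lies in $\DCOH^-(X)$ and the finite tor-dimension of $\cplx{N}$ keeps $\cplx{N}\tensor^{\LDERF}_{\Orb_X}\LDERF\QCPBK{f}\cplx{G}$ bounded below, so the inner $\SRHom$ is in $\DQCOH^+(X)$. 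For \itemref{item:big_one:B}, the smooth-local reduction is legitimate because quasi-isomorphy is detected after restriction to a smooth cover, and $\LDERF\QCPBK{(-)}$, $\tensor^{\LDERF}$, and $\SRHom$ with first argument in $\DCOH^-$ all commute with the exact restriction along a smooth morphism in the Olsson--Laszlo--Olsson framework; the affine statement you land on is the standard comparison $N\tensor^{\LDERF}_A\RHom_A(F,G)\to\RHom_A(F,N\tensor^{\LDERF}_A G)$, and your brutal-truncation bookkeeping (perfect head $\sigma^{\geq -m}P$ where the map is an elementary isomorphism, tails pushed to arbitrarily high degree on both sides by lower-boundedness of $G$ and the tor-amplitude of $N$) is precisely a hands-on instance of the ``way-out'' lemma \cite[I.7.1]{MR0222093} that the paper invokes elsewhere; one could equally cite it directly, viewing both sides as way-out functors of $\cplx{F}$ agreeing on perfect complexes. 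Two small remarks: your argument for \itemref{item:big_one:B} never uses coherence of $\cplx{N}$ (bounded above with finite tor-dimension over $S$ suffices, so the lemma's hypotheses are stronger than needed there), and the one step you pass over quickly---that the global composite map really does collapse, under the extension--restriction adjunction and $N\tensor^{\LDERF}_B B\tensor^{\LDERF}_A(-)\homotopic N\tensor^{\LDERF}_A(-)$, to the canonical affine comparison map---is routine but deserves a sentence in a polished version.
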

Our next lemma will give the compatibility of the corepresenting
object in Theorem \ref{mainthms:coh_stk_dual} with base change---the
result for schemes boils down to the well-known tor-independent base
change \cite[Thm.~8.3.2]{MR2222646}.
\begin{lem}\label{lem:base_change} 
Fix a $2$-cartesian square of noetherian algebraic stacks:
\[
\xymatrix@-0.8pc{X_T \ar[r]^{g_X} \ar[d]_{f_T} & X \ar[d]^{f} \\ T \ar[r]^{g}
& S.}
\]
Let $\cplx{M} \in \DCOH^-(X)$, $\shv{N} \in \COH{X}$, and $\cplx{I}
\in \DQCOH^+(T)$. If $\shv{N}$ is flat over $S$, then there is a natural
quasi-isomorphism in $\DQCOH^+(S)$:
\begin{align*}
  \RDERF \QCPSH{f} \SRHom_{\Orb_X}&(\cplx{M},\shv{N}[0]
  \tensor_{\Orb_X}^\LDERF \LDERF \QCPBK{f} \RDERF g_*\cplx{I})\\
  &\to \RDERF
  g_*\RDERF \QCPSH{(f_T)}\SRHom_{\Orb_{X_T}}(\LDERF
  \QCPBK{(g_X)}\cplx{M},
  (g_X^*\shv{N})[0]\tensor^{\LDERF}_{\Orb_{X_T}}
  \LDERF \QCPBK{(f_T)} \cplx{I} ).
\end{align*}
\end{lem}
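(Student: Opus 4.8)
The plan is to realise the asserted morphism as a composite of three canonical quasi-isomorphisms, only the first of which carries real content. Throughout write $\cplx{P} := (g_X^*\shv{N})[0]\tensor_{\Orb_{X_T}}^{\LDERF}\LDERF\QCPBK{(f_T)}\cplx{I}$ for the coefficient complex appearing on the right-hand side. Because $\shv{N}$ is flat over $S$, its pullback $g_X^*\shv{N}$ is flat over $T$, and the functors $\shv{N}[0]\tensor_{\Orb_X}^{\LDERF}\LDERF\QCPBK{f}(-)$ and $(g_X^*\shv{N})[0]\tensor_{\Orb_{X_T}}^{\LDERF}\LDERF\QCPBK{(f_T)}(-)$ carry $\DQCOH^+$ into $\DQCOH^+$ (locally they are $N\tensor_A(-)$ with $N$ flat over $A$). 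In particular $\cplx{P}\in\DQCOH^+(X_T)$ and the coefficient complex $\shv{N}[0]\tensor_{\Orb_X}^{\LDERF}\LDERF\QCPBK{f}\RDERF g_*\cplx{I}$ lies in $\DQCOH^+(X)$, so the $\RDERF\QCPSH{}$- and $\SRHom$-formalism of $\DQCOH^+$ applies to every term below.

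The key step is a flat base-change identity for the coefficient complex, namely a quasi-isomorphism
\[
\shv{N}[0]\tensor_{\Orb_X}^{\LDERF}\LDERF\QCPBK{f}\RDERF g_*\cplx{I}\homotopic \RDERF\QCPSH{(g_X)}\cplx{P}.
\]
Granting this, the lemma follows formally. I substitute it into the left-hand side, apply the trivial-duality quasi-isomorphism \eqref{eq:sheaf_trivial_duality} to the quasicompact morphism $g_X : X_T \to X$ with $\cplx{M}\in\DCOH^-(X)$ and $\cplx{P}\in\DQCOH^+(X_T)$ to obtain a quasi-isomorphism $\SRHom_{\Orb_X}(\cplx{M},\RDERF\QCPSH{(g_X)}\cplx{P})\homotopic \RDERF\QCPSH{(g_X)}\SRHom_{\Orb_{X_T}}(\LDERF\QCPBK{(g_X)}\cplx{M},\cplx{P})$, and finally use the equality $\RDERF\QCPSH{f}\,\RDERF\QCPSH{(g_X)} = \RDERF g_*\,\RDERF\QCPSH{(f_T)}$ of functors on $\DQCOH^+(X_T)$, which holds because $f\opp g_X = g\opp f_T$. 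Composing these three identifications yields precisely the right-hand side of the lemma, and each is natural in $\cplx{I}$.

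It remains to establish the key identity, and this is where the flatness of $\shv{N}$ over $S$ is essential. First, $S$-flatness gives $\LDERF\QCPBK{(g_X)}\shv{N}[0]\homotopic (g_X^*\shv{N})[0]$, since the higher Tor-sheaves $\STor_{>0}^{\Orb_X}(\shv{N},\Orb_{X_T})$ vanish. Next, the projection formula \eqref{eq:projection_formula} applied to $g_X$ rewrites $\RDERF\QCPSH{(g_X)}\cplx{P}$ as $\shv{N}[0]\tensor_{\Orb_X}^{\LDERF}\RDERF\QCPSH{(g_X)}\LDERF\QCPBK{(f_T)}\cplx{I}$, so that the asserted identity becomes the statement that $\shv{N}[0]\tensor_{\Orb_X}^{\LDERF}(-)$ transforms the base-change morphism $\LDERF\QCPBK{f}\RDERF g_*\cplx{I}\to \RDERF\QCPSH{(g_X)}\LDERF\QCPBK{(f_T)}\cplx{I}$ into an isomorphism. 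The subtle point is that this base-change morphism is \emph{not} an isomorphism in general---its failure is measured by the higher Tor-sheaves recording the tor-dependence of $X$ and $T$ over $S$---yet it becomes one after tensoring with the $S$-flat sheaf $\shv{N}$, precisely because such tensoring annihilates those Tor-sheaves. A direct local computation over an affine presentation confirms this: with $S = \spec A$, $T = \spec A'$, and $\shv{N}$ modelled by an $A$-flat $B$-module $N$, both sides reduce to $N\tensor_A(-)$ applied to $\cplx{I}$.

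The main obstacle is therefore making this key identity rigorous in the stacky setting, where the projection formula \eqref{eq:projection_formula} and tor-independent base change \cite[Thm.~8.3.2]{MR2222646} are most readily available for representable morphisms of schemes. I would handle this by descent: the functors $\RDERF\QCPSH{}$, $\LDERF\QCPBK{}$, $\SRHom$, and $\tensor^{\LDERF}$ are all compatible with flat base change along a smooth presentation, so checking that a given morphism in $\DQCOH^+$ is a quasi-isomorphism may be carried out after smooth base change to atlases of $S$, $T$, and $X$. This reduces the key identity to the affine scheme case, where it is exactly the tor-independent base change theorem \cite[Thm.~8.3.2]{MR2222646}, with the $S$-flatness of $\shv{N}$ supplying the required tor-independence. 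Care must be taken that the chosen presentations are compatible with the cartesian square and that boundedness below is preserved throughout---both of which are guaranteed by the flatness of $\shv{N}$.
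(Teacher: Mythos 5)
Your skeleton is exactly the one the paper has in mind: the paper gives no written proof of Lemma \ref{lem:base_change} (it is introduced as ``easily proven'', with the remark that for schemes it boils down to tor-independent base change \cite[Thm.~8.3.2]{MR2222646}), and your decomposition into (i) the key identity $\shv{N}[0]\tensor^{\LDERF}_{\Orb_X}\LDERF\QCPBK{f}\RDERF g_*\cplx{I}\homotopic\RDERF\QCPSH{(g_X)}\cplx{P}$ with $\cplx{P}=(g_X^*\shv{N})[0]\tensor^{\LDERF}_{\Orb_{X_T}}\LDERF\QCPBK{(f_T)}\cplx{I}$, (ii) trivial duality \eqref{eq:sheaf_trivial_duality} for the quasicompact $g_X$ with $\cplx{M}\in\DCOH^-(X)$ and $\cplx{P}\in\DQCOH^+(X_T)$, and (iii) $\RDERF\QCPSH{f}\RDERF\QCPSH{(g_X)}\homotopic\RDERF g_*\RDERF\QCPSH{(f_T)}$ is the intended route. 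Your local computation is also correct, including the genuinely important observation that the bare base-change map $\LDERF\QCPBK{f}\RDERF g_*\cplx{I}\to\RDERF\QCPSH{(g_X)}\LDERF\QCPBK{(f_T)}\cplx{I}$ fails to be an isomorphism in general and only becomes one after tensoring with the $S$-flat $\shv{N}$, and that flatness keeps every term bounded below so the $\DQCOH^+$ formalism applies.

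The gap is in your final descent step. Localizing on $X$ (restriction to a smooth atlas is conservative) and on $S$ (flat base change for $\RDERF g_*$ and $\RDERF\QCPSH{(g_X)}$ on $\DQCOH^+$) is fine, but ``smooth base change to an atlas of $T$'' is not a valid reduction: $\cplx{I}$ lives on $T$ and $\RDERF g_*$ sees the stackiness of its \emph{source}, so for an atlas $t:T'\to T$ the complex $\RDERF(g\opp t)_*t^*\cplx{I}$ does not compute $\RDERF g_*\cplx{I}$, and proving the identity for $(T',t^*\cplx{I})$ says nothing about $(T,\cplx{I})$. Consequently your reduction makes $S$ and $X$ affine schemes but never $T$, and \cite[Thm.~8.3.2]{MR2222646} is not literally applicable; relatedly, the paper's projection formula \eqref{eq:projection_formula} is stated only for representable morphisms, whereas $g_X$ (a pullback of the arbitrary $g$) need not be representable. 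The repair requires no new idea: once $S=\spec A$ and $X=\spec B$ are affine, quasi-isomorphy on $X$ can be tested on $\RDERF\Gamma(X,-)$, and $f_T$ is affine and representable, so \eqref{eq:projection_formula} legitimately applies to $f_T$ and gives $\RDERF\QCPSH{(f_T)}\cplx{P}\homotopic \bigl((f_T)_*g_X^*\shv{N}\bigr)[0]\tensor^{\LDERF}_{\Orb_T}\cplx{I}$, where $(f_T)_*g_X^*\shv{N}\cong g^*(f_*\shv{N})$ (affine pushforward commutes with arbitrary base change) and $f_*\shv{N}$ is $S$-flat. The key identity then reduces to the projection formula for the possibly non-representable $g$ with this flat coefficient, $\RDERF g_*\bigl(g^*(f_*\shv{N})\tensor^{\LDERF}\cplx{I}\bigr)\homotopic f_*\shv{N}\tensor^{\LDERF}_{\Orb_S}\RDERF g_*\cplx{I}$, which one proves directly by Lazard's theorem (write the flat module $N$ as a filtered colimit of finite free $A$-modules) together with the fact that each $\COHO{q}\RDERF g_*$ commutes with filtered colimits on the noetherian $T$. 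With that substitution for your last paragraph, the proof is complete and agrees with the argument the paper is pointing to.
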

We can now prove Theorem \ref{mainthms:coh_stk_dual}.
\begin{proof}[Proof of Theorem \ref{mainthms:coh_stk_dual}]
  For background material on dualizing complexes we refer the reader
  to \cite[V.2]{MR0222093}. For the
  convenience of the reader, however, we will recall the relevant
  results. A complex $\cplx{K} \in \DCOH^b(S)$ is dualizing if it is
  locally of finite injective dimension and for any $\cplx{F} \in
  \DCOH(S)$, the natural map:
  \[
  \cplx{F} \to
  \SRHom_{\Orb_S}(\SRHom_{\Orb_S}(\cplx{F},\cplx{K}),\cplx{K})
  \]
  is a quasi-isomorphism. For notational convenience we set 
  $\DUAL{-}=\SRHom_{\Orb_S}(-,\cplx{K})$. Two useful facts about
  dualizing complexes are the following:
  \begin{enumerate}
  \item \label{item:dualizing1} the functor $\DUAL{-}$ interchanges
    $\DCOH^-(S)$ and $\DCOH^+(S)$;
  \item \label{item:dualizing2} for $\cplx{F}$, $\cplx{G} \in
    \DCOH(S)$, there is a natural quasi-isomorphism:
    \[
    \SRHom_{\Orb_S}(\cplx{F},\cplx{G}) \homotopic
    \SRHom_{\Orb_S}(\DUAL{\cplx{G}},\DUAL{\cplx{F}}).
    \]
  \end{enumerate}
  Now fix $\cplx{I} \in \DCOH^+(S)$, then we have the following
  sequence of natural quasi-isomorphisms:
  \begin{align*}
    &\RDERF \QCPSH{f} \SRHom_{\Orb_X}(\cplx{M},\cplx{N}
    \tensor_{\Orb_X}^\LDERF \LDERF \QCPBK{f}\cplx{I}) \\
    &\homotopic \RDERF \QCPSH{f}\SRHom_{\Orb_X}(\cplx{M},\cplx{N}
    \tensor_{\Orb_X}^{\LDERF} \LDERF
    \QCPBK{f}\SRHom_{\Orb_S}(\DUAL{\cplx{I}},\cplx{K})) & \mbox{(by \itemref{item:dualizing1})}\\
    &\homotopic \RDERF
    \QCPSH{f}\SRHom_{\Orb_X}(\cplx{M},\SRHom_{\Orb_X} (\LDERF
    \QCPBK{f}\DUAL{\cplx{I}}, \cplx{N}
    \tensor_{\Orb_X}^{\LDERF} \LDERF \QCPBK{f}\cplx{K})) & \mbox{(by Lemma
      \ref{lem:big_one}\itemref{item:big_one:B})}\\
    &\homotopic \RDERF
    \QCPSH{f}\SRHom_{\Orb_X}(\cplx{M}\tensor^{\LDERF}_{\Orb_X} \LDERF
    \QCPBK{f}\DUAL{\cplx{I}},\cplx{N}
    \tensor_{\Orb_X}^{\LDERF} \LDERF \QCPBK{f}\cplx{K}) &\mbox{(by
      \eqref{eq:lhomadj})}\\ 
   &\homotopic \RDERF \QCPSH{f}\SRHom_{\Orb_X}(\LDERF
    \QCPBK{f}\DUAL{\cplx{I}},\SRHom_{\Orb_X} ( \cplx{M}, \cplx{N}
    \tensor_{\Orb_X}^{\LDERF} \LDERF \QCPBK{f}\cplx{K})) &\mbox{(by
      \eqref{eq:lhomadj})}\\ 
    &\homotopic \SRHom_{\Orb_S}(\DUAL{\cplx{I}},\RDERF
    \QCPSH{f}\SRHom_{\Orb_X} ( \cplx{M}, \cplx{N}
    \tensor_{\Orb_X}^{\LDERF} \LDERF \QCPBK{f}\cplx{K}))
    &\mbox{(by \eqref{eq:sheaf_trivial_duality})}\\ 
    &\homotopic \SRHom_{\Orb_S}(\DUAL{\RDERF \QCPSH{f}\SRHom_{\Orb_X}
      ( \cplx{M}, \cplx{N} \tensor_{\Orb_X}^{\LDERF} \LDERF
      \QCPBK{f}\cplx{K})}, \cplx{I}) & \mbox{(by \itemref{item:dualizing2})}.
  \end{align*}
  The final quasi-isomorphism is a consequence of the following
  sequence of observations. First, $\cplx{N} \in \DCOH^b(X)$ has
  finite tor-dimension over $S$ and $\cplx{K} \in \DCOH^b(S)$, so $\cplx{N}
  \tensor^{\LDERF}_{\Orb_X} \LDERF \QCPBK{f}\cplx{K} \in
  \DCOH^b(X)$. Also, $f$ is proper so \cite[6.4.4 \& 10.13]{MR2312554}
  implies that $\RDERF f_*\SRHom_{\Orb_X} ( \cplx{M}, \cplx{N}
  \tensor_{\Orb_X}^{\LDERF} \LDERF \QCPBK{f}\cplx{K}) \in
  \DCOH^+(S)$. Thus \itemref{item:dualizing2} applies.  Hence, we have
  produced a natural quasi-isomorphism for all $\cplx{I} \in \DCOH^+(S)$:
  \begin{equation}
    \RDERF \QCPSH{f}\SRHom_{\Orb_X}(\cplx{M},\cplx{N}
    \tensor_{\Orb_X}^{\LDERF } \LDERF \QCPBK{f}\cplx{I}) \homotopic
    \SRHom_{\Orb_S}(\cplx{E}_{\cplx{M},\cplx{N}},\cplx{I}). \label{eq:quasi_perfect}  
  \end{equation}  
  where
  \[
  \cplx{E}_{\cplx{M},\cplx{N}} \homotopic
  \SRHom_{\Orb_S}(\RDERF \QCPSH{f}\SRHom_{\Orb_X}(\cplx{M},\cplx{N}
  \tensor_{\Orb_X}^{\LDERF } \LDERF
  \QCPBK{f}\cplx{K}),\cplx{K}) \in \DCOH^-(S). 
  \]
  We now need to extend the quasi-isomorphism \eqref{eq:quasi_perfect}
  to $\cplx{I} \in \DQCOH^+(S)$.  First, we note that because
  $\cplx{N}$ is bounded, there
  exists an $r$ such that for all $n$ and all $\cplx{I} \in
  \DQCOH(S)$ the natural map:
  \[
  \trunc{\geq n+r}(\cplx{N}
  \tensor_{\Orb_X}^{\LDERF } \LDERF
  \QCPBK{f}\cplx{I}) \to   \trunc{\geq n+r}(\cplx{N}
  \tensor_{\Orb_X}^{\LDERF } \LDERF
  \QCPBK{f}[\trunc{\geq n}\cplx{I}]) 
  \]
  is a quasi-isomorphism. Hence, by Lemma \ref{lem:lc_der_cat} there exist maps for all
  $\cplx{I} \in \DCOH(S)$: 
  \begin{align*}
    \SRHom_{\Orb_S}(\cplx{E}_{\cplx{M},\cplx{N}},\cplx{I}) &
    \homotopic \holim{n}
    \SRHom_{\Orb_S}(\cplx{E}_{\cplx{M},\cplx{N}},\trunc{\geq-n
    }\cplx{I})\\
    &\homotopic \holim{n} \RDERF f_*\SRHom_{\Orb_X}(\cplx{M},\cplx{N}
    \tensor_{\Orb_X}^\LDERF \LDERF \QCPBK{f}[\trunc{\geq -n}\cplx{I}])\\
    &\to \holim{n} \RDERF f_*\SRHom_{\Orb_X}(\cplx{M},\trunc{\geq-
      n+r}(\cplx{N} \tensor_{\Orb_X}^\LDERF \LDERF
    \QCPBK{f}[\trunc{\geq
      -n}\cplx{I}]))\\
    &\homotopic \holim{n} \RDERF f_*\SRHom_{\Orb_X}(\cplx{M},\trunc{\geq
      -n+r}(\cplx{N}
    \tensor_{\Orb_X}^\LDERF \LDERF \QCPBK{f}\cplx{I}))\\
    &\homotopic \RDERF f_*\SRHom_{\Orb_X}(\cplx{M},\cplx{N}
    \tensor_{\Orb_X}^\LDERF \LDERF \QCPBK{f}\cplx{I}).
  \end{align*}
  Note, however, that the maps above depend on $\cplx{M}$, $\cplx{N}$,
  and $\cplx{I}$ in a non-natural way (this is because $\holim{n}$ is
  constructed as a cone, thus is not functorial). In any case,
  corresponding to the identity map $\cplx{E}_{\cplx{M},\cplx{N}} \to
  \cplx{E}_{\cplx{M},\cplx{N}}$ there is a morphism
  $\psi_{\cplx{M},\cplx{N}} : \cplx{M} \to \cplx{N}
  \tensor_{\Orb_X}^\LDERF \LDERF
  \QCPBK{f}\cplx{E}_{\cplx{M},\cplx{N}}$ (which is not necessarily
  functorial in $\cplx{M}$ or $\cplx{N}$).  Now take $\cplx{I} \in
  \DQCOH^+(S)$, then Lemma \ref{lem:big_one}\itemref{item:big_one:A}
  provides a natural sequence of maps:
  \begin{align*}
    \SRHom_{\Orb_S}(\cplx{E}_{\cplx{M},\cplx{N}},\cplx{I}) 
    &\to \RDERF \QCPSH{f} \SRHom_{\Orb_X}(\cplx{N}
    \tensor_{\Orb_X}^\LDERF \LDERF\QCPBK{f}\cplx{E}_{\cplx{M},\cplx{N}},
    \cplx{N} \tensor_{\Orb_X}^\LDERF \LDERF \QCPBK{f}\cplx{I}) \\
    &\to \RDERF \QCPSH{f} \SRHom_{\Orb_X}(\cplx{M},\cplx{N}
    \tensor_{\Orb_X}^\LDERF \LDERF\QCPBK{f}\cplx{I}).
  \end{align*}
  By \eqref{eq:quasi_perfect}, the map above is certainly a
  quasi-isomorphism for all $\cplx{I} \in \DCOH^+(S)$. To 
  show that it is a quasi-isomorphism for all
  $\cplx{I} \in \DQCOH^+(S)$, by the ``way-out right'' results of
  \cite[I.7.1]{MR0222093}, it is sufficient to prove that it is a
  quasi-isomorphism for all quasicoherent $\Orb_S$-modules. We may now
  reduce to the case where $S$ is an affine and noetherian
  scheme. Hence, it is sufficient to prove that the natural
  transformation of functors from $\QCOH{S} \to \AB$:
  \[
  \Hom_{\Orb_S}(\cplx{E}_{\cplx{M},\cplx{N}},(-)[0]) \to
  \Hom_{\Orb_X}(\cplx{M},\cplx{N}\tensor_{\Orb_X}^\LDERF \LDERF \QCPBK{f}(-)[0])
  \]
  is an isomorphism. By Lemma \ref{lem:lp_stk_hom}, both functors
  preserve filtered colimits and the exhibited natural transformation
  is an isomorphism for all $\shv{I} \in \COH{S}$. Since any $\shv{I}
  \in \QCOH{S}$ is a filtered colimit of objects of $\COH{S}$, we
  deduce the result.

  It now remains to address the compatibility of
  $\cplx{E}_{\cplx{M},\cplx{N}}$ with base change in the situation
  where $\cplx{N} \homotopic \shv{N}[0]$ and $\shv{N} \in \COH{X}$ is
  flat over $S$. So, we fix a morphism of noetherian algebraic stacks $g
  : T\to S$ such that $T$ admits a dualizing complex and form the
  $2$-cartesian square of noetherian algebraic stacks: 
  \[
  \xymatrix@-0.8pc{X_T \ar[r]^{g_X} \ar[d]_{f_T} & X \ar[d]^{f} \\ T
    \ar[r]^{g} & S.}
  \]
  By Lemma \ref{lem:base_change} and what we have proven so far, there
  is a quasi-isomorphism, natural in $\cplx{I} \in \DQCOH^+(T)$: 
  \begin{align*}
    \SRHom_{\Orb_S}(\cplx{E}_{\cplx{M},\shv{N}[0]},\RDERF g_*\cplx{I})
    \homotopic \RDERF g_*\SRHom_{\Orb_T}(\cplx{E}_{\LDERF
      \QCPBK{(g_X)}\cplx{M},(g_X^*\shv{N})[0]},\cplx{I}).
  \end{align*}
  By trivial duality \eqref{eq:sheaf_trivial_duality} we thus obtain
  a quasi-isomorphism, natural in $\cplx{I} \in \DQCOH^+(T)$: 
  \[
  \RDERF g_*\SRHom_{\Orb_T}(\LDERF \QCPBK{g}
  \cplx{E}_{\cplx{M},\shv{N}[0]},\cplx{I}) \homotopic \RDERF
  g_*\SRHom_{\Orb_T}(\cplx{E}_{\LDERF 
      \QCPBK{(g_X)}\cplx{M},(g_X^*\shv{N})[0]},\cplx{I}).
  \]
  By \eqref{eq:lghom}, we thus see that we have a quasi-isomorphism,
  natural in $\cplx{I} \in \DQCOH^+(T)$:
  \begin{equation*}
    \RHom_{\Orb_T}(\LDERF \QCPBK{g}
    \cplx{E}_{\cplx{M},\shv{N}[0]},\cplx{I}) \homotopic
    \RHom_{\Orb_T}(\cplx{E}_{\LDERF
      \QCPBK{(g_X)}\cplx{M},(g_X^*\shv{N})[0]},\cplx{I}).
  \end{equation*}  
  By Lemma \ref{lem:lc_der_cat} and the above we obtain a sequence of
  quasi-isomorphisms:
  \begin{align*}
    \LDERF \QCPBK{g} \cplx{E}_{\cplx{M},\shv{N}[0]} &\homotopic \holim{n}
    \trunc{\geq -n}\LDERF \QCPBK{g} \cplx{E}_{\cplx{M},\shv{N}[0]}
    \homotopic \holim{n} \trunc{\geq -n}\cplx{E}_{\LDERF
      \QCPBK{(g_X)}\cplx{M},(g_X^*\shv{N})[0]} \\
    &\homotopic \cplx{E}_{\LDERF
      \QCPBK{(g_X)}\cplx{M},(g_X^*\shv{N})[0]}.\qedhere
  \end{align*} 
\end{proof}
\section{Finitely generated and coherent functors}\label{sec:fg_coh}
Fix a ring $A$. Let $\FUNP{A}$ denote the category of functors 
$\MOD{A} \to \SETS$ which commute with finite products. This is a full
subcategory of the category of all functors $\MOD{A} \to \SETS$. Denote by
$\LIN{A}$ the category of $A$-linear functors $\MOD{A} 
\to\MOD{A}$. The following is straightforward. 
\begin{lem}\label{lem:runar}
  Fix a ring $A$, then the forgetful functor:
  \[
  \LIN{A} \to \FUNP{A}
  \]
 is an equivalence of categories. 
\end{lem}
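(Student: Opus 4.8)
The plan is to exhibit an explicit quasi-inverse $\Phi : \FUNP{A} \to \LIN{A}$ to the forgetful functor $U$ and to verify that the two composites are naturally isomorphic (in fact equal, so that $U$ is even an isomorphism of categories). First note that $U$ is well defined: any $F \in \LIN{A}$ is additive, hence preserves finite biproducts, and the underlying-set functor $\MOD{A}\to\SETS$ preserves products, so $U(F) \in \FUNP{A}$. The conceptual key is that every object $M \in \MOD{A}$ is canonically an $A$-module object of $\MOD{A}$: the addition $\sigma_M : M \oplus M \to M$, the zero $0 \to M$, the negation $M \to M$, and for each $a \in A$ the homothety $a_M : M \to M$ are all morphisms of $\MOD{A}$, and the module axioms are encoded as commutative diagrams built from finite products of $M$ and these structure maps.

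Given $G \in \FUNP{A}$, I would equip each set $G(M)$ with an $A$-module structure by transporting this internal structure through $G$. Since $G$ preserves finite products, the canonical isomorphism $G(M \oplus M) \cong G(M) \times G(M)$ composed with $G(\sigma_M)$ defines an addition on $G(M)$, while $G$ of the zero, negation, and homothety maps supplies the remaining operations; in particular $G(0)$ is a one-point set and picks out the zero element. Because $G$ preserves the finite-product diagrams witnessing associativity, commutativity, the unit and inverse laws, distributivity, and $(ab)_M = a_M \circ b_M$, the set $G(M)$ becomes an $A$-module, and applying $G$ to the fact that any $A$-linear $f : M \to N$ commutes with all the structure maps shows that $G(f)$ is $A$-linear. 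Thus $\Phi(G) := (M \mapsto G(M))$ is a functor $\MOD{A}\to\MOD{A}$. The step I expect to be the main obstacle is showing that $\Phi(G)$ is genuinely $A$-\emph{linear}, not merely additive: one must observe that $G$ is automatically additive on morphisms although this was never assumed. This follows by writing $f + g = \sigma_N \circ (f \oplus g) \circ \Delta_M$ and applying $G$, which yields $G(f+g) = G(f) + G(g)$ with respect to the transported addition; combined with $G(af) = G(a_N \circ f) = a\,G(f)$, this gives $A$-linearity of $\Hom_{\Orb}(M,N) \to \Hom_{\Orb}(G(M),G(N))$. This additivity identity, together with the routine bookkeeping that the module axioms are product-diagrams preserved by $G$, is the only genuinely nontrivial content.

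On morphisms, a natural transformation $\eta : G \Rightarrow G'$ has each component $\eta_M$ automatically $A$-linear, since naturality against $\sigma_M$, $0_M$, and $a_M$ forces $\eta_M$ to respect the transported operations; hence $\eta$ is already a morphism of $\LIN{A}$ and I set $\Phi(\eta) = \eta$, making $\Phi$ a functor. It then remains to identify the composites. The composite $\LIN{A} \xrightarrow{U} \FUNP{A} \xrightarrow{\Phi} \LIN{A}$ is the identity: for $A$-linear $F$, additivity makes $F(\sigma_M)$ the fold map and $A$-linearity makes $F(a_M) = a\cdot \mathrm{id}_{F(M)}$, so the structure transported onto $F(M)$ is exactly its original one. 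Conversely $\FUNP{A} \xrightarrow{\Phi} \LIN{A} \xrightarrow{U} \FUNP{A}$ merely forgets the structure just added, returning the underlying functor $G$ unchanged. Therefore $U$ is an equivalence (indeed an isomorphism) of categories, which is what the lemma asserts.
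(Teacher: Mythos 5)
Your proof is correct: the paper offers no argument at all for this lemma (it is dismissed as ``straightforward''), and your transport-of-structure argument --- viewing each $M$ as an $A$-module object in $\MOD{A}$ via $\sigma_M$, $0$, and the homotheties $a_M$, pushing this structure through a finite-product-preserving $G$, and checking that additivity of $G$ on morphisms and $A$-linearity of the components of any natural transformation come for free --- is exactly the standard argument the paper implicitly invokes, carried out carefully (including the genuinely non-obvious point that $G(f+g)=G(f)+G(g)$ via $f+g=\sigma_N\circ(f\oplus g)\circ\Delta_M$). The only implicit conventions you rely on, both consistent with the paper, are that $A$ is commutative (so that $a_M$ is a morphism of $\MOD{A}$) and that ``finite products'' includes the empty product (so that $G(0)$ is a singleton).
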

In particular, for a ring $A$, set $T=\spec A$ and note that Lemma
\ref{lem:runar} shows that the category of additive functors
$\QCOH{T} \to \AB$ is equivalent to the 
category of $A$-linear functors $\MOD{A} \to \MOD{A}$. We will use
this equivalence without further mention to translate definitions
between the two categories. 

A functor $Q : \MOD{A} \to \SETS$ is \fndefn{finitely generated} if
there exists an $A$-module $I$ and an object $\eta \in Q(I)$ such
that for all $A$-modules $M$, the induced morphism of sets
$\Hom_A(I,M) \to Q(M) : f\mapsto f_*\eta$ is surjective. We 
call the pair $(I,\eta)$ a \fndefn{generator} for the functor $Q$. The
notion of finite 
generation of a functor is due to M. Auslander \cite{MR0212070}.
\begin{ex}
  Fix a ring $A$. For any $A$-module $I$, the functor $M\mapsto
  \Hom_A(I,M)$ is finitely generated. The functor $M\mapsto I\tensor_A
  M$ is finitely generated if and only if the $A$-module $I$ is
  finitely generated as an $A$-module.  A generator is obtained by
  choosing a surjection $A^n \to I$, and noting that for any
  $A$-module $M$, there is a surjection $A^n\tensor_A M \to I\tensor_A
  M$, and an isomorphism $A^n \tensor_A M \to \Hom_A(A^n,M)$. 
\end{ex}
\begin{ex}\label{ex:fg_prod}
  Fix a ring $A$ and a collection of finitely generated functors
  $\{F_\lambda : \MOD{A} \to \AB\}_{\lambda\in \Lambda}$ indexed by a
  set $\Lambda$. Then, the functor $M \mapsto \Pi_{\lambda\in
    \Lambda} F_\lambda(M)$ is finitely generated. Indeed, for each
  $\lambda\in \Lambda$, let  $(I_\lambda,\eta_\lambda)$ be a generator
  for $F_\lambda$. Then, $(\Pi_{\lambda\in \Lambda} I_\lambda,
  (\eta_\lambda)_{\lambda\in \Lambda})$ is a generator for the functor
  $M\mapsto \Pi_{\lambda \in \Lambda} F_\lambda(M)$. 
\end{ex}
For an $A$-algebra $B$, and a functor $Q : \MOD{A}
\to \SETS$, there is an induced functor $Q_B : \MOD{B} \to \SETS$
given by regarding a $B$-module as an $A$-module. Note that since the
forgetful functor $\MOD{B} \to \MOD{A}$ commutes with all limits and
colimits, it follows that if the functor $Q$ preserves certain limits
or colimits, so does 
the functor $Q_B$. In particular, from Lemma \ref{lem:runar} we see
that if the functor $Q$ commutes with finite products, then the
functor $Q_B : \MOD{B} \to \SETS$ is canonically $B$-linear and thus
defines a functor $Q_B : \MOD{B} \to \MOD{B}$. We will use this
fact frequently and without further comment. 
\begin{ex}\label{ex:fg_rest}
  For an $A$-algebra $B$, if $(I,\eta)$ generates $Q$, let $p : I \to
  I\tensor_A B$ be the natural $A$-module homomorphism, then
  $(I\tensor_A B, p_*\eta)$ generates $Q_B$.
\end{ex}
An additive functor $F:\MOD{A} \to \AB$ is
\fndefn{coherent}, if 
there exists an $A$-module homomorphism $f :I \to J$ and an element
$\eta \in F(I)$, inducing an exact sequence for any $A$-module $M$:  
\[
\xymatrix{\Hom_A(J,M) \ar[r] & \Hom_A(I,M) \ar[r] & F(M) \ar[r] & 0. }
\]
We refer to the data $(f : I \to J,\eta)$ as a \fndefn{presentation}
for $F$. For accounts of coherent functors, we refer the
interested reader to \cite{MR0212070,MR1656482}. We now have a number
of examples.  
\begin{ex}\label{ex:coherent_ab}
  Given an exact sequence of additive functors $H_i : \MOD{A} \to
  \AB$:
    \[
   \xymatrix{H_1 \ar[r] & H_2 \ar[r] & H_3 \ar[r] & H_4 \ar[r] & H_5, }
   \]
   where for $i\neq 3$, we have that $H_i$ is coherent. Then, $H_3$ is
   coherent. In particular, the category of coherent functors is stable
   under kernels, cokernels, subquotients, and extensions. This follows from
   \cite[Prop.\ 2.1]{MR0212070}.
\end{ex}
\begin{ex}\label{ex:coh_rest}
  Fix a ring $A$ and an $A$-algebra $B$. If $F : \MOD{A} \to \AB$ is a
  coherent functor, then analogously to Example \ref{ex:fg_rest}, the
  restriction $F_B : \MOD{B} \to \AB$ is also coherent. 
\end{ex}
Fix a ring $A$. A functor $F : \MOD{A} \to \MOD{A}$ is
\fndefn{half-exact} if for any short exact sequence of $A$-modules $0
\to M' \to M \to M'' \to 0$, the sequence $F(M') \to F(M) \to F(M'')$
is exact. 
\begin{ex}\label{ex:cmplx}
  Fix a ring $A$ and let $Q^\bullet$ be a complex of $A$-modules. Then
  the functor $M\mapsto H^i(\Hom_A(Q^\bullet,M))$ is coherent for all
  $i\in \Z$. If the complex $Q^\bullet$ is term-by-term projective,
  the functor $M\mapsto H^i(\Hom_A(Q^\bullet,M))$ is also
  half-exact. 
\end{ex}
An $A$-linear functor of the form $M \mapsto \Ext^i_A(Q^\bullet,M)$ is
said to be \fndefn{corepresentable by a complex}. By Example
\ref{ex:cmplx}, such functors are coherent and half-exact, and were
intially studied by M.~Auslander \cite{MR0212070}, with stronger
results---in the noetherian setting---obtained R.~Hartshorne
\cite{MR1656482}. In \cite{coh_crit_he_func}, it is shown that \'etale
locally any half-exact, coherent functor is corepresentable by a
complex.
\begin{ex}\label{ex:coh_prod}
  Fix a ring $A$ and a coherent functor $F : \MOD{A} \to \AB$. Then,
  $F$ preserves small products. It was shown by H. Krause 
  \cite[Prop.\ 3.2]{MR2026723} that the preservation of small products
  characterizes coherent functors.  
\end{ex}
\begin{ex}\label{ex:coh_lims}
  Fix a ring $A$. Example \ref{ex:fg_prod} extends to show that the
  category of coherent functors $\MOD{A} \to \AB$ is closed under
  small products. By Example \ref{ex:coherent_ab}, the category of
  coherent functors $\MOD{A} \to \AB$ is also closed under
  equalizers. Thus, the category of coherent functors $\MOD{A} \to
  \AB$ is closed under small limits. 
\end{ex}
\begin{ex}\label{ex:left_exact_coh}
  Fix a ring $A$ and a coherent functor $F : \MOD{A} \to
  \AB$ which is left exact. By Example \ref{ex:coh_prod}, $F$ also
  preserves small products, thus $F$ preserves small 
  limits. The Eilenberg-Watts Theorem \cite[Thm.\
  6]{MR0118757} now implies that $F$ is corepresentable. That is, there
  exists an $A$-module $Q$ such that $F(-) \cong \Hom_A(Q,-)$. If, in
  addition, the functor $F$ preserves direct limits, then $Q$ is
  finitely presented. By Example \ref{ex:cmplx}, this observation
  generalizes \cite[III.7.4.6]{EGA}.  
\end{ex}
\begin{ex}
  Fix a ring $A$ and an $A$-module $N$. Then the functor $M\mapsto
  M\tensor_A N$ is coherent if and only if the $A$-module $N$ is
  finitely presented.
\end{ex}
\begin{ex}\label{ex:noetheriancmplx}
  Fix a noetherian ring $R$. Let $Q^\bullet$ be a complex of finitely
  generated $R$-modules, then the functor $M\mapsto
  H^i(Q^\bullet\tensor_R M)$ is coherent and commutes with filtered
  colimits. If the complex 
  $Q^\bullet$ is, in addition, flat term-by-term, then the
  functor $M\mapsto H^i(Q^\bullet\tensor_R M)$ is also half-exact.  
\end{ex}
\begin{ex}\label{ex:noetheriantorext}
  Fix a noetherian ring $R$. Let $Q^\bullet$ be a bounded above
  complex of $R$-modules with coherent cohomology. Then, the functors
  $M\mapsto \Tor_{i}^R(Q^\bullet, M)$ and $M\mapsto 
  \Ext^i_R(Q^\bullet,M)$ are coherent, half-exact and preserve
  filtered colimits. 
\end{ex}
\section{Coherence of $\Hom$-functors: flat case}\label{sec:pf_cohstk_flat}
Proof of Theorem
  \ref{mainthms:cohstk_flat} The proof of Theorem \ref{mainthms:cohstk_flat} will be via an
induction argument that permits us to reduce to the case of Theorem
\ref{mainthms:coh_stk_dual}. The following notation will be
useful. 
\begin{notn}
  For a morphism of algebraic stacks $f : X \to S$, and $\cplx{M}$,
  $\cplx{N} \in \DQCOH(X)$, set:
  \[
  \E{\cplx{M}}{\cplx{N}} :=
  \Hom_{\Orb_X}(\cplx{M},\cplx{N}\tensor_{\Orb_X}^\LDERF
  \LDERF \QCPBK{f}(-)) : \QCOH{S} \to \AB.
  \]
  For $\cplx{N} \in \DQCOH(X)$ we set
  $\TSUB{X/S}{\cplx{N}} \subset \DQCOH(X)$ to 
  be the full subcategory having objects those $\cplx{M}$ with
  the property that $\E{\cplx{M}}{\cplx{N}[n]}$ is coherent
  for all $n\in \Z$.
\end{notn}
We begin with two general reductions. 
\begin{lem}\label{lem:cohstk_bdd_red}
  Fix an affine scheme $S$ and a morphism of algebraic stacks $f : X
  \to S$ and $\cplx{N} \in \DQCOH(X)$, then the subcategory
  $\TSUB{X/S}{\cplx{N}}\subset \DQCOH(X)$ is triangulated and closed
  under small direct sums. In particular, if
  \begin{enumerate}
  \item \label{lem:cohstk_bdd_red:item:above}
    $\DCAT^-_{\mathrm{QCoh}}(X) \subset \TSUB{X/S}{\cplx{N}}$, or
  \item \label{lem:cohstk_bdd_red:item:shv} $\cplx{N}$ has finite tor-dimension over $S$ and $\shv{M} \in \TSUB{X/S}{\cplx{N}}$ for
    all $\shv{M} \in \QCOH{X}$,
  \end{enumerate}
  then $\TSUB{X/S}{\cplx{N}} = \DQCOH(X)$.
\end{lem}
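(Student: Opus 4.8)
The plan is to establish the two formal properties—triangulated and closed under direct sums—purely from the homological algebra of coherent functors, and only afterward to treat the two sufficient conditions. For triangulatedness, closure under the shift functor is immediate from the definition, since coherence of $\E{\cplx{M}}{\cplx{N}[n]}$ is demanded for \emph{every} $n\in\Z$, and shifting $\cplx{M}$ merely reindexes $n$. For closure under cones, I would take a distinguished triangle $\cplx{M}'\to\cplx{M}\to\cplx{M}''$ and apply the cohomological functor $\Hom_{\Orb_X}(-,\cplx{N}[n]\tensor_{\Orb_X}^\LDERF\LDERF\QCPBK{f}(-))$; evaluated at a fixed $\shv{I}\in\QCOH{S}$ this yields a long exact sequence of abelian groups, natural in $\shv{I}$, interleaving the functors $\E{\cplx{M}'}{\cplx{N}[m]}$, $\E{\cplx{M}}{\cplx{N}[m]}$, and $\E{\cplx{M}''}{\cplx{N}[m]}$ over varying $m$. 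If two of the three vertices lie in $\TSUB{X/S}{\cplx{N}}$, then the four terms flanking $\E{\cplx{M}}{\cplx{N}[n]}$ are coherent, and the five-term criterion of Example~\ref{ex:coherent_ab} forces $\E{\cplx{M}}{\cplx{N}[n]}$ to be coherent as well. Closure under small direct sums is cleaner still: since $\Hom$ out of a coproduct is a product, $\E{\bigoplus_\lambda\cplx{M}_\lambda}{\cplx{N}[n]}\cong\prod_\lambda\E{\cplx{M}_\lambda}{\cplx{N}[n]}$, and coherent functors are closed under small products by Example~\ref{ex:coh_lims}.

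For the sufficiency of condition~\itemref{lem:cohstk_bdd_red:item:above}, I would write an arbitrary $\cplx{M}\in\DQCOH(X)$ as the homotopy colimit $\hocolim{n}\trunc{\leq n}\cplx{M}$ of its truncations; this is legitimate because $\MOD{X}$ is Grothendieck, so small coproducts are exact and cohomology commutes with the sequential homotopy colimit. Each $\trunc{\leq n}\cplx{M}$ is bounded above with quasicoherent cohomology, hence lies in $\DCAT^-_{\mathrm{QCoh}}(X)\subseteq\TSUB{X/S}{\cplx{N}}$. As $\hocolim{n}\trunc{\leq n}\cplx{M}$ is the third vertex of a distinguished triangle whose other two vertices are the coproduct $\bigoplus_n\trunc{\leq n}\cplx{M}$, closure under direct sums and cones places $\cplx{M}$ in $\TSUB{X/S}{\cplx{N}}$.

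Condition~\itemref{lem:cohstk_bdd_red:item:shv} is where the real work lies, and I expect the finite tor-dimension hypothesis to be the crux. First I would bootstrap from sheaves to $\DQCOH^b(X)$ by induction on cohomological amplitude, feeding the truncation triangle $\trunc{\leq b-1}\cplx{M}\to\cplx{M}\to\COHO{b}(\cplx{M})[-b]$ into closure under shifts and cones. The genuinely delicate step is promoting this to all of $\DQCOH^-(X)$. Here I would use that if $\cplx{N}$ has tor-dimension $\leq n_0$ over $S$ and $\cplx{N}\in\DQCOH^{\leq d}(X)$, then for every module $\shv{I}$ the complex $\cplx{N}[n]\tensor_{\Orb_X}^\LDERF\LDERF\QCPBK{f}\shv{I}$ is concentrated in degrees $\geq c$ with $c:=-n_0-n$ (and $\leq d-n$). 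Mapping into an object bounded below at $c$ detects only $\trunc{\geq c}\cplx{M}$: applying the standard t-structure vanishing $\Hom(\DCAT^{\leq c-1},\DCAT^{\geq c})=0$ to the triangle $\trunc{\leq c-1}\cplx{M}\to\cplx{M}\to\trunc{\geq c}\cplx{M}$ gives an isomorphism of functors $\E{\cplx{M}}{\cplx{N}[n]}\cong\E{\trunc{\geq c}\cplx{M}}{\cplx{N}[n]}$, natural in $\shv{I}$ because the triangle is independent of $\shv{I}$. When $\cplx{M}$ is bounded above, $\trunc{\geq c}\cplx{M}$ is then \emph{bounded}, hence an object of $\DQCOH^b(X)\subseteq\TSUB{X/S}{\cplx{N}}$; thus each $\E{\cplx{M}}{\cplx{N}[n]}$ is coherent and $\cplx{M}\in\TSUB{X/S}{\cplx{N}}$. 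This establishes $\DCAT^-_{\mathrm{QCoh}}(X)\subseteq\TSUB{X/S}{\cplx{N}}$, whereupon the homotopy-colimit argument of the previous paragraph finishes the proof.

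The heart of the matter is thus the tor-dimension reduction in condition~\itemref{lem:cohstk_bdd_red:item:shv}: it is precisely what allows one to truncate a bounded-above $\cplx{M}$ from below into the \emph{bounded} range, so that one never needs a homotopy \emph{limit}—for which $\TSUB{X/S}{\cplx{N}}$, being closed under coproducts rather than products, offers no control. I would expect the only points requiring genuine care to be the naturality of the truncation isomorphism in $\shv{I}$ and the bookkeeping of the degree bound $c=-n_0-n$; everything else is formal manipulation with the closure properties of coherent functors recorded in Examples~\ref{ex:coherent_ab} and~\ref{ex:coh_lims}.
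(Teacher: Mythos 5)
Your proposal is correct and follows essentially the same route as the paper: the five-term criterion of Example~\ref{ex:coherent_ab} and the product closure of Example~\ref{ex:coh_lims} for the triangulated/coproduct claims, the homotopy-colimit triangle $\bigoplus_n \trunc{\leq n}\cplx{M} \to \bigoplus_n \trunc{\leq n}\cplx{M} \to \cplx{M}$ for \itemref{lem:cohstk_bdd_red:item:above} (which the paper cites from Laszlo--Olsson rather than reproving), and for \itemref{lem:cohstk_bdd_red:item:shv} the tor-dimension bound giving the natural isomorphism $\E{\cplx{M}}{\cplx{N}[n]}\cong\E{\trunc{\geq l-n}\cplx{M}}{\cplx{N}[n]}$, reducing $\DQCOH^-(X)$ to $\DQCOH^b(X)$ and thence to sheaves by truncations and cones. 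You merely perform the bounded bootstrap before the truncation reduction rather than after, and spell out the t-structure vanishing and degree bookkeeping that the paper leaves implicit; there is no substantive difference.
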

\begin{proof}
  Certainly, $\TSUB{X/S}{\cplx{N}}$ is closed under
  shifts. Next, given a triangle $\cplx{M}_1 \to \cplx{M}_2 \to
  \cplx{M}_3$ in $\DQCOH(X)$ with
  $\cplx{M}_1$, $\cplx{M}_2 \in \TSUB{X/S}{\cplx{N}}$, we
  obtain an exact sequence of functors:
  \[
  \E{\cplx{M}_2[1]}{\cplx{N}} \to
  \E{\cplx{M}_1[1]}{\cplx{N}} \to
  \E{\cplx{M}_3}{\cplx{N}} \to
  \E{\cplx{M}_2}{\cplx{N}} \to
  \E{\cplx{M}_1}{\cplx{N}}.
  \]
  By Example \ref{ex:coherent_ab},
  $\E{\cplx{M}_3}{\cplx{N}} \in
  \TSUB{X/S}{\cplx{N}}$ and so $\TSUB{X/S}{\cplx{N}}$ is
  a triangulated subcategory of $\DQCOH(X)$. Let
  $\{\cplx{M}_i\}_{i\in I}$ be a set of elements from $\TSUB{X/S}{\cplx{N}}$. Set $\cplx{M} =
  \oplus_{i\in I} 
  \cplx{M}_i$, then for all $n \in \Z$ there is an isomorphism
  of functors $\E{\cplx{M}}{\cplx{N}[n]} \cong \Pi_{i\in
    I}\E{\cplx{M}_i}{\cplx{N}[n]}$. By Example
  \ref{ex:coh_lims} we conclude that $\cplx{M} \in
  \TSUB{X/S}{\cplx{N}}$.

  For \itemref{lem:cohstk_bdd_red:item:above}, by
  \cite[Lem.~4.3.2]{MR2434692}, given $\cplx{M} \in
  \DQCOH(X)$, there is a triangle:
  \[
  \bigoplus_{n\geq 0} \trunc{\leq n}\cplx{M} \to \bigoplus_{n\geq 0}
  \trunc{\leq n} \cplx{M} \to \cplx{M}. 
  \]
  By hypothesis, $\trunc{\leq n}\cplx{M} \in
  \TSUB{X/S}{\cplx{N}}$ for all $n\geq 0$, and so
  $\cplx{M} \in \TSUB{X/S}{\cplx{N}}$.

  For \itemref{lem:cohstk_bdd_red:item:shv} by
  \itemref{lem:cohstk_bdd_red:item:above}, it is sufficient to prove that
  $\DQCOH^-(X) \subset \TSUB{X/S}{\cplx{N}}$. Since $\cplx{N}$ has
  finite tor-dimension over $S$, there exists an integer $l$ such that
  the natural map $\cplx{N} \tensor^\LDERF_{\Orb_X} \LDERF \QCPBK{f} I
  \in \DQCOH^{\geq l}(X)$ for all $I\in \QCOH{S}$. Thus, if $\cplx{M}
  \in \DQCOH^-(X)$, then for any integer $n$ we have a natural
  isomorphism of functors: $\E{\cplx{M}}{\cplx{N}[n]} \cong
  \E{\trunc{\geq l-n}\cplx{M}}{\cplx{N}[n]}$. Hence, it is sufficient
  to prove that $\DQCOH^b(X) \subset \TSUB{X/S}{\cplx{N}}$. Working
  with truncations and cones gives the result.
\end{proof}
\begin{lem}\label{lem:cohstk_maps}
  Fix an affine scheme $S$, a representable morphism of
  algebraic $S$-stacks $p : X'\to X$ which is quasicompact, and
  $\cplx{G'} \in 
  \DQCOH(X')$. Then, $\E{\cplx{M}}{\RDERF p_*\cplx{G'}} = \E{\LDERF
  \QCPBK{p}\cplx{M}}{\cplx{G'}}$. In
  particular, if $\TSUB{X'/S}{\cplx{G'}} =
  \DQCOH(X')$, then $\TSUB{X/S}{\RDERF
    p_*\cplx{G'}} = \DQCOH(X)$.
\end{lem}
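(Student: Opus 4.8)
The plan is to establish the functorial identity $\E{\cplx{M}}{\RDERF p_*\cplx{G'}} = \E{\LDERF \QCPBK{p}\cplx{M}}{\cplx{G'}}$ directly from the definitions and the adjunction between $\LDERF \QCPBK{p}$ and $\RDERF p_*$, and then read off the consequence for the subcategories $\TSUB{}{}$ formally. First I would write out, for an arbitrary $\shv{I} \in \QCOH{S}$, the group $\E{\cplx{M}}{\RDERF p_*\cplx{G'}}(\shv{I}) = \Hom_{\Orb_X}(\cplx{M}, (\RDERF p_*\cplx{G'})\tensor_{\Orb_X}^\LDERF \LDERF \QCPBK{f}(\shv{I}))$, where $f : X \to S$ is the structure morphism. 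The key point is to move the tensor factor $\LDERF \QCPBK{f}(\shv{I})$ inside $\RDERF p_*$ using the projection formula \eqref{eq:projection_formula}: since $p$ is representable and quasicompact, and $f\opp p : X' \to S$ is the structure morphism of $X'$, we have $(\RDERF p_*\cplx{G'})\tensor_{\Orb_X}^\LDERF \LDERF \QCPBK{f}\shv{I} \homotopic \RDERF p_*(\cplx{G'}\tensor_{\Orb_{X'}}^\LDERF \LDERF \QCPBK{p}\LDERF \QCPBK{f}\shv{I}) \homotopic \RDERF p_*(\cplx{G'}\tensor_{\Orb_{X'}}^\LDERF \LDERF \QCPBK{(f\opp p)}\shv{I})$, using that $\LDERF \QCPBK{p}\opp \LDERF \QCPBK{f} \homotopic \LDERF \QCPBK{(f\opp p)}$.

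Having rewritten the target of the inner $\Hom$ as a pushforward along $p$, the next step is to apply the adjunction \eqref{eq:global_trivial_duality} between $\LDERF \QCPBK{p}$ and $\RDERF p_*$ (valid here because $p$ is representable and quasicompact, so the adjunction extends to all of $\DQCOH(X')$, not merely the bounded-below part). This gives a natural isomorphism
\[
\Hom_{\Orb_X}(\cplx{M}, \RDERF p_*(\cplx{G'}\tensor_{\Orb_{X'}}^\LDERF \LDERF \QCPBK{(f\opp p)}\shv{I})) \cong \Hom_{\Orb_{X'}}(\LDERF \QCPBK{p}\cplx{M}, \cplx{G'}\tensor_{\Orb_{X'}}^\LDERF \LDERF \QCPBK{(f\opp p)}\shv{I}),
\]
and the right-hand side is precisely $\E{\LDERF \QCPBK{p}\cplx{M}}{\cplx{G'}}(\shv{I})$ computed relative to the structure morphism $f\opp p$ of $X'$ over $S$. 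Since every isomorphism used is natural in $\shv{I}$, these assemble to an equality (or canonical isomorphism) of functors $\QCOH{S} \to \AB$, which is the first assertion.

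The second assertion is then formal. Coherence of $\E{\cplx{M}}{\RDERF p_*\cplx{G'}[n]}$ is equivalent to coherence of $\E{\LDERF \QCPBK{p}\cplx{M}}{\cplx{G'}[n]}$ for every $n \in \Z$, because $\RDERF p_*$ commutes with the shift and the identity of functors just proved is compatible with shifting $\cplx{G'}$. Thus $\cplx{M} \in \TSUB{X/S}{\RDERF p_*\cplx{G'}}$ if and only if $\LDERF \QCPBK{p}\cplx{M} \in \TSUB{X'/S}{\cplx{G'}}$. If $\TSUB{X'/S}{\cplx{G'}} = \DQCOH(X')$, then $\LDERF \QCPBK{p}\cplx{M} \in \TSUB{X'/S}{\cplx{G'}}$ automatically for every $\cplx{M} \in \DQCOH(X)$, whence $\cplx{M} \in \TSUB{X/S}{\RDERF p_*\cplx{G'}}$; as $\cplx{M}$ was arbitrary, $\TSUB{X/S}{\RDERF p_*\cplx{G'}} = \DQCOH(X)$. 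The main obstacle to watch is the applicability of the projection formula and the extended adjunction in the derived setting for a representable quasicompact $p$; both are precisely the results cited for this case in \cite{perfect_complexes_stacks} (generalizing \cite[Prop.~5.3]{MR1308405}), so the hypothesis that $p$ is representable and quasicompact is exactly what makes the unbounded statement go through.
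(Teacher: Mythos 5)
Your proposal is correct and follows essentially the same route as the paper: rewrite $\E{\cplx{M}}{\RDERF p_*\cplx{G'}}$ via the projection formula \eqref{eq:projection_formula} to move $\LDERF\QCPBK{f}(-)$ inside $\RDERF p_*$, then apply the adjunction \eqref{eq:global_trivial_duality}, which holds on all of $\DQCOH(X')$ precisely because $p$ is representable and quasicompact. Your added remarks on naturality in $\shv{I}$, compatibility with shifts, and the formal deduction of the ``in particular'' clause are all consistent with (and slightly more explicit than) the paper's argument.
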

\begin{proof}
  Fix $\cplx{M} \in \DQCOH(X)$, then:
  \begin{align*}
    \E{\cplx{M}}{\RDERF p_*\cplx{G'}} &=
    \Hom_{\Orb_X}(\cplx{M}, (\RDERF p_*\cplx{G'})
    \tensor_{\Orb_X}^\LDERF \LDERF \QCPBK{f}(-))\\
    &\cong \Hom_{\Orb_X}(\cplx{M}, \RDERF
    p_*(\cplx{G'} \tensor_{\Orb_{X'}}^\LDERF \LDERF \QCPBK{p}\LDERF
    \QCPBK{f}(-)))\\
    &\cong \Hom_{\Orb_X}((\LDERF \QCPBK{p}\cplx{M}),
    \cplx{G'} \tensor_{\Orb_{X'}}^\LDERF \LDERF 
    \QCPBK{g}(-))) =: \E{\LDERF \QCPBK{p}\cplx{M}}{\cplx{G'}},
  \end{align*}
  with the penultimate isomorphism given by the projection formula
  \eqref{eq:projection_formula}.  
\end{proof}

We now have our first induction result. 
\begin{lem}\label{lem:red_qcfp_F}
  Fix an affine scheme $S$, a morphism of algebraic stacks $f : X
  \to S$ which is of finite presentation, $\cplx{N} \in \DQCOH(X)$,
  and an integer $n\in \Z$. Suppose  
  that the functor $\E{\shv{M}}{\cplx{N}[r]}$ is coherent in the
  following situations:
  \begin{enumerate}
  \item for all $r<n$ and $\shv{M} \in \QCOH{X}$;
  \item $r=n$ and $\shv{M} \in \QCOH{X}$ of finite presentation.
  \end{enumerate}
  Then, the functor $\E{\shv{M}}{\cplx{N}[n]}$ is coherent for all
  $\shv{M}\in \QCOH{X}$. 
\end{lem}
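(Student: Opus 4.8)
The plan is to reduce an arbitrary $\shv{M} \in \QCOH{X}$ to the finitely presented case covered by hypothesis~(2), paying for the reduction with the lower shifts covered by hypothesis~(1). Since $f$ is of finite presentation and $S$ is affine, $X$ is quasicompact and quasiseparated, so every quasicoherent sheaf is a filtered colimit $\shv{M} \homeq \varinjlim_\lambda \shv{M}_\lambda$ of finitely presented sheaves $\shv{M}_\lambda$. The one computation driving everything is that $\E{-}{\cplx{N}[r]}$ carries direct sums in its first argument to products: for $\shv{P} = \bigoplus_\lambda \shv{M}_\lambda$ one has a natural isomorphism $\E{\shv{P}}{\cplx{N}[r]} \homeq \prod_\lambda \E{\shv{M}_\lambda}{\cplx{N}[r]}$, since $\Hom_{\Orb_X}(\bigoplus_\lambda \shv{M}_\lambda, -)=\prod_\lambda \Hom_{\Orb_X}(\shv{M}_\lambda,-)$. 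At the critical shift $r=n$ each factor is coherent by hypothesis~(2), and coherent functors are closed under small products (Example \ref{ex:coh_lims}); hence $\E{\shv{P}}{\cplx{N}[n]}$ is coherent for any direct sum $\shv{P}$ of finitely presented sheaves. At lower shifts $r<n$ there is nothing to check: hypothesis~(1) already gives coherence of $\E{\shv{P}'}{\cplx{N}[r]}$ for every quasicoherent $\shv{P}'$.

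The assembly step is then to realize $\shv{M}$ inside a short exact sequence $0 \to \shv{P} \to \shv{Q} \to \shv{M} \to 0$ in $\QCOH{X}$ in which both $\shv{P}$ and $\shv{Q}$ are direct sums of finitely presented sheaves; the mapping telescope of the filtered system $\{\shv{M}_\lambda\}$ is designed to do exactly this. Viewing this sequence as a distinguished triangle in $\DQCOH(X)$ and applying the contravariant cohomological functor $\E{-}{\cplx{N}[\bullet]}$ — here I use, as in Lemma \ref{lem:cohstk_bdd_red}, that $\E{\shv{M}'[1]}{\cplx{N}[r]} \homeq \E{\shv{M}'}{\cplx{N}[r-1]}$ — produces a five-term exact sequence of functors $\QCOH{S} \to \AB$:
\[
\E{\shv{Q}}{\cplx{N}[n-1]} \to \E{\shv{P}}{\cplx{N}[n-1]} \to \E{\shv{M}}{\cplx{N}[n]} \to \E{\shv{Q}}{\cplx{N}[n]} \to \E{\shv{P}}{\cplx{N}[n]}.
\]
The two outer terms on the left sit at shift $n-1<n$ and so are coherent by hypothesis~(1); the two on the right sit at shift $n$ but have direct sums of finitely presented sheaves as their first argument, hence are coherent by the previous paragraph. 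Since coherent functors are stable under passage to the middle term of such a five-term exact sequence (Example \ref{ex:coherent_ab}), $\E{\shv{M}}{\cplx{N}[n]}$ is coherent, which is the claim.

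The step I expect to be delicate is producing the two-term presentation $0 \to \shv{P} \to \shv{Q} \to \shv{M} \to 0$ with both ends direct sums of finitely presented sheaves. For a colimit indexed by $\N$ the telescope map $\mathrm{id}-\mathrm{shift}$ on $\bigoplus_\lambda \shv{M}_\lambda$ is visibly injective with cokernel $\shv{M}$, but a general filtered index requires care, since the naive presentation by the bar complex has a non-injective first differential whose kernel is only quasicoherent rather than a direct sum of finitely presented sheaves. I would handle this either by reorganizing the filtered system so that the telescope applies, or — bypassing the two-term presentation altogether — by using the full bar resolution of $\shv{M}$ by direct sums of finitely presented sheaves together with a homotopy-limit argument in the spirit of Lemma \ref{lem:lc_der_cat}: the finite brutal truncations contribute only finitely many coherent functors (by the shift bookkeeping above and Example \ref{ex:coherent_ab}), and the resulting $\varprojlim$ and $\varprojlim^1$ terms remain coherent because coherent functors are closed under small limits (Example \ref{ex:coh_lims}) and under cokernels of maps of products (Example \ref{ex:coherent_ab}).
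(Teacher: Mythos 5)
Your overall strategy is the paper's own: approximate $\shv{M}$ by finitely presented sheaves via Rydh's theorem, use that $\E{-}{\cplx{N}[r]}$ converts direct sums into products together with closure of coherent functors under products (Example \ref{ex:coh_lims}), and conclude with a five-term exact sequence and Example \ref{ex:coherent_ab}. The gap is exactly where you flag it, and neither of your proposed repairs fully closes it as stated. Note first why the gap bites: in your five-term sequence the rightmost term is $\E{\shv{P}}{\cplx{N}[n]}$, i.e.\ the kernel sits at the \emph{critical} shift $n$, where the hypotheses only cover finitely presented sheaves; so you genuinely need $\shv{P}$ to be a direct sum of finitely presented sheaves, not merely quasicoherent. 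Your first repair---reindexing so that the telescope map is injective---fails in general: an arbitrary filtered (directed) index poset need not contain a cofinal chain (e.g.\ the finite subsets of an uncountable set), so the $\mathrm{id}-\mathrm{shift}$ telescope only handles countably indexed colimits, and there is no reason the $\shv{M}_\lambda$ produced by \cite[Thm.~A]{rydh-2009} form such a system.

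The paper's fix is a small but decisive rearrangement that makes non-injectivity harmless. With $\theta : \shv{Q}_2 = \oplus_{\lambda\leq\lambda'}\shv{M}_\lambda \to \shv{Q}_1 = \oplus_\lambda \shv{M}_\lambda$ the two-term bar map ($\coker\theta \cong \shv{M}$), one takes the \emph{cone} $\cplx{Q}$ of $\theta$ in $\DQCOH(X)$. The triangle $\shv{Q}_2 \to \shv{Q}_1 \to \cplx{Q}$ gives, by your own product argument at shift $n$ and hypothesis (1) below it, that $\E{\cplx{Q}}{\cplx{N}[r]}$ is coherent for all $r\leq n$. Then the second triangle relating $\cplx{Q}$, $\shv{M}$, and $\ker\theta$ produces a five-term sequence in which the merely quasicoherent sheaf $\ker\theta$ appears only at shifts strictly less than $n$---where hypothesis (1) covers \emph{all} quasicoherent sheaves---while the terms at shift $\leq n$ involve only $\cplx{Q}$. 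In other words, passing to the cone shifts the kernel down by one degree, precisely avoiding the term $\E{\shv{P}}{\cplx{N}[n]}$ that blocks your version. Your second fallback (full bar resolution plus a Milnor-sequence/homotopy-(co)limit argument) can in fact be completed along the lines you sketch, since $\varprojlim$ of coherent functors is coherent (Example \ref{ex:coh_lims}), $\varprojlim^1$ is a cokernel of a map of products, and coherent functors are closed under extensions (Example \ref{ex:coherent_ab}); but it is a heavier version of the same maneuver, of which the cone of the two-term bar complex is the first step and already suffices.
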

\begin{proof}
  Fix $\shv{M} \in \QCOH{X}$, then we must prove that
  $\E{\shv{M}}{\cplx{N}[n]}$ is coherent. The morphism $f$ is of
  finite presentation, so by \cite[Thm.\ A]{rydh-2009}, the quasicoherent
  $\Orb_X$-module $\shv{M}$ is a filtered colimit of $\Orb_X$-modules
  $\shv{M}_\lambda$ of finite presentation. Let $\shv{Q}_1 = \oplus_\lambda
  \shv{M}_\lambda$, $\shv{Q}_2 = \oplus_{\lambda 
\leq \lambda'} \shv{M}_\lambda$ and take $\theta :
  \shv{Q}_2 \to \shv{Q}_1$ to be the natural map with $\coker \theta
  \cong \shv{M}$. Take $\cplx{Q}$ to be the cone of $\theta$ in
  $\DQCOH(X)$, for all integers $r$ we obtain an exact sequence in
  $\FUNP{S}$: 
  \[
  \xymatrix{\E{\shv{Q}_1}{\cplx{N}[r-1]} \ar[r] &
    \E{\shv{Q}_2}{\cplx{N}[r-1]} \ar[r] &
  \E{\cplx{Q}}{\cplx{N}[r]}  \ar[r] & \E{\shv{Q}_1}{\cplx{N}[r]} \ar[r] & 
  \E{\shv{Q}_2}{\cplx{N}[r]}.}
  \]
  For all integers $r$ we also have
  isomorphisms 
  \[
  \E{\shv{Q}_1}{\cplx{N}[r]} \cong \prod_\lambda \E{\shv{M}_\lambda}{\cplx{N}[r]}
  \quad \mbox{and} \quad \E{\shv{Q}_2}{\cplx{N}[r]} \cong
  \prod_{\lambda\leq \lambda'}
  \E{\shv{M}_\lambda}{\cplx{N}[r]}.
  \]
  By Examples \ref{ex:coh_lims} and \ref{ex:coherent_ab}, together
  with our hypotheses, we deduce that $\E{\cplx{Q}}{\cplx{N}[r]}$ is a
  coherent functor for all $r\leq n$. Now, there is a distinguished
  triangle $\cplx{Q} \to \shv{M}[-1] \to (\ker \theta)[1]$, thus we
  obtain an exact sequence in $\FUNP{S}$ for all integers $r$:
  \[
  \xymatrix{\E{\cplx{Q}}{\cplx{N}[r-1]} \ar[r] & \E{(\ker
      \theta)}{\cplx{N}[r-2]} \ar[r] & \E{\shv{M}}{\cplx{N}[r]} \ar[r] &
    \E{\cplx{Q}}{\cplx{N}[r]} \ar[r] & 
    \E{(\ker \theta)}{\cplx{N}[r-1]}.}
  \]
  By hypothesis, $\E{(\ker \theta)}{\cplx{N}[r]}$ is coherent for all
  $r<n$. Taking $r=n$ in the exact sequence above and applying Example
  \ref{ex:coherent_ab}, we deduce that $\E{\shv{M}}{\cplx{N}[n]}$ is
  coherent.
\end{proof}
Our next result forms the second part of the induction process. We
wish to emphasize that in the following lemma, some of the pullbacks
are underived. This is not a typographical error and is essential to
the argument.  
\begin{lem}\label{lem:red_noeth_mod}
  Fix a $2$-cartesian diagram of algebraic stacks:
  \[
  \xymatrix@-0.8pc{X \ar[r]^h \ar[d]_f & \ar[d]^{f_0} X_0\\ S \ar[r]^g & S_0}
  \]
  where $S$ and $S_0$ are affine schemes. Fix an integer $n$ and let
  $\shv{M}_0$, $\shv{N}_0 \in \QCOH{X_0}$. Assume,
  in addition, that $\shv{N}_0$ is of finite presentation, flat
  over $S_0$, and that the functors: 
  \[
  \E{\shv{M}_0}{\shv{N}_0[l]} \quad \mbox{and} \quad
  \E{\shv{F}}{(h^*\shv{N}_0)[r]}
  \]
  are coherent for all integers $l$ and for all integers $r<n$ and all
  $\shv{F} \in \QCOH{X}$. Then, the functors
  $\E{(h^*\shv{M}_0)}{(h^*\shv{N}_0)[r]}$ are coherent for
  all integers $r\leq n$. 
\end{lem}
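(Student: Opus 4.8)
The plan is to treat the ranges $r<n$ and $r=n$ separately. For $r<n$ there is nothing to prove: taking $\shv{F}=h^*\shv{M}_0\in\QCOH{X}$ in the hypothesis shows $\E{h^*\shv{M}_0}{(h^*\shv{N}_0)[r]}$ is coherent. So the entire content is the case $r=n$, and the idea is to compare the functor built from the \emph{underived} pullback $h^*\shv{M}_0$ with the one built from the \emph{derived} pullback $\LDERF\QCPBK{h}\shv{M}_0$: the former is what we want, while the latter is directly accessible by base change to $X_0$. This is exactly the distinction flagged before the statement.

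First I would extract two consequences of flatness. Since $\shv{N}_0$ is flat over $S_0$ and $h$ is a base change of $g$, the sheaf $h^*\shv{N}_0$ is flat over $S$, $\LDERF\QCPBK{h}\shv{N}_0\homotopic h^*\shv{N}_0$, and for every $\shv{I}\in\QCOH{S}$ the complex $(h^*\shv{N}_0)\tensor^\LDERF_{\Orb_X}\LDERF\QCPBK{f}\shv{I}$ is concentrated in degree $0$, equal to $h^*\shv{N}_0\tensor_{\Orb_X}f^*\shv{I}$. As $h$ is affine (a base change of the affine morphism $g$), $\RDERF\QCPSH{h}$ is exact on quasicoherent sheaves, and the projection formula \eqref{eq:projection_formula} together with the tor-independence supplied by flatness yields a natural quasi-isomorphism $\RDERF\QCPSH{h}\big((h^*\shv{N}_0)\tensor^\LDERF_{\Orb_X}\LDERF\QCPBK{f}\shv{I}\big)\homotopic \shv{N}_0\tensor^\LDERF_{\Orb_{X_0}}\LDERF\QCPBK{(f_0)}g_*\shv{I}$. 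Feeding this into the adjunction \eqref{eq:global_trivial_duality} for $\LDERF\QCPBK{h}\dashv\RDERF\QCPSH{h}$ gives, for every $m$ and every $\shv{I}\in\QCOH{S}$, a natural isomorphism $\E{\LDERF\QCPBK{h}\shv{M}_0}{(h^*\shv{N}_0)[m]}(\shv{I})\cong\E{\shv{M}_0}{\shv{N}_0[m]}(g_*\shv{I})$; that is, $\E{\LDERF\QCPBK{h}\shv{M}_0}{(h^*\shv{N}_0)[m]}$ is the restriction to $B$-modules of $\E{\shv{M}_0}{\shv{N}_0[m]}$. Since the latter is coherent for every $m$ by hypothesis, Example \ref{ex:coh_rest} shows $\E{\LDERF\QCPBK{h}\shv{M}_0}{(h^*\shv{N}_0)[m]}$ is coherent for all $m\in\Z$.

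It remains to pass from the derived to the underived pullback. Because $\shv{M}_0$ sits in degree $0$ and $h$ is affine, $\LDERF\QCPBK{h}\shv{M}_0\in\DQCOH^{\leq 0}(X)$ with $\COHO{0}=h^*\shv{M}_0$, giving a distinguished triangle $\trunc{\leq -1}\LDERF\QCPBK{h}\shv{M}_0\to\LDERF\QCPBK{h}\shv{M}_0\to h^*\shv{M}_0$. Applying the contravariant functors $\E{-}{(h^*\shv{N}_0)[\bullet]}$ and invoking Example \ref{ex:coherent_ab}, coherence of $\E{h^*\shv{M}_0}{(h^*\shv{N}_0)[n]}$ will follow once I show $\E{\trunc{\leq -1}\LDERF\QCPBK{h}\shv{M}_0}{(h^*\shv{N}_0)[m]}$ is coherent for $m=n-1$ and $m=n$ (the derived-pullback terms in the long exact sequence being coherent by the previous paragraph). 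Here I use that the target $(h^*\shv{N}_0)[m]\tensor^\LDERF_{\Orb_X}\LDERF\QCPBK{f}\shv{I}$ is a single sheaf placed in degree $-m$, so that for $\cplx{P}\in\DQCOH^{\leq -1}(X)$ the functor $\E{\cplx{P}}{(h^*\shv{N}_0)[m]}$ depends only on the bounded truncation $\trunc{\geq -m}\cplx{P}$, naturally in $\shv{I}$. The hyper-$\Ext$ spectral sequence of this bounded complex then exhibits its associated graded as subquotients of $\E{\COHO{j}(\LDERF\QCPBK{h}\shv{M}_0)}{(h^*\shv{N}_0)[m+j]}$ for $-m\leq j\leq -1$, hence with shift $m+j\in[0,m-1]$ strictly below $n$; each $\COHO{j}(\LDERF\QCPBK{h}\shv{M}_0)\in\QCOH{X}$, so these are coherent by hypothesis, and closure of coherent functors under subquotients and extensions (Example \ref{ex:coherent_ab}) delivers the claim.

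The delicate point, and the one I expect to be the main obstacle, is precisely this shift bookkeeping. A naive dévissage of $\trunc{\leq -1}\LDERF\QCPBK{h}\shv{M}_0$ one cohomology sheaf at a time reintroduces, through the connecting maps, the forbidden shift $m=n$, which is \emph{not} covered by the hypothesis. It is the passage to the bounded truncation $\trunc{\geq -m}$ (equivalently, reading off the spectral sequence at a single total degree) that confines every contributing cohomology sheaf to a shift $<n$, and the validity of that truncation as an isomorphism of functors of $\shv{I}$ rests squarely on $h^*\shv{N}_0$ being flat over $S$, so that the target is concentrated in a single degree. Verifying this compatibility carefully is the heart of the argument.
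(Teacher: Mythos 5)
Your first two steps are sound and match the paper's: the case $r<n$ is indeed immediate, and your identification $\E{\LDERF\QCPBK{h}\shv{M}_0}{(h^*\shv{N}_0)[m]}(\shv{I})\cong\E{\shv{M}_0}{\shv{N}_0[m]}(g_*\shv{I})$ is exactly the content of Lemma \ref{lem:base_change} (your parenthetical about tor-independence is where the flatness of $\shv{N}_0$ does real work, since the square itself need not be tor-independent, but the conclusion is correct), after which Example \ref{ex:coh_rest} gives coherence at \emph{all} shifts $m$, as in the paper. The gap is in your final step. You correctly diagnose that naive d\'evissage reintroduces the forbidden shift $n$ through connecting maps, but the spectral sequence does not cure this. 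The $E_2$-terms on the total-degree-$m$ diagonal are coherent, as you say; however, the associated graded of the abutment consists of $E_\infty$-terms, and passing from $E_2$ to $E_\infty$ requires taking homology against differentials $d_r : E_r^{p,q}\to E_r^{p+r,q-r+1}$ landing in total degree $m+1$. For $m=n$ this diagonal contains, e.g., the term $\E{\COHO{-1}(\LDERF\QCPBK{h}\shv{M}_0)}{(h^*\shv{N}_0)[n]}$ (target of $d_q$ from $(p,q)$ with $p+q=n$), which is precisely the shift your hypothesis does not cover. One cannot discard these targets: Example \ref{ex:coherent_ab} gives closure of coherent functors under homology of complexes \emph{of coherent functors}, not under arbitrary subquotients or kernels of maps into unknown additive functors. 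Indeed the kernel of a natural transformation from a coherent functor to a merely additive one can fail to be coherent --- over $\Z$, the torsion subfunctor $M\mapsto M_{\mathrm{tors}}$ is the kernel of the coherent identity functor mapping to $-\tensor_{\Z}\mathbb{Q}$, and it does not preserve products, hence is not coherent by Example \ref{ex:coh_prod}. So ``$E_\infty$ is a subquotient of the coherent $E_2$'' does not yield coherence, and the problem propagates to every page and every diagonal.

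The paper's proof escapes this trap by a different bookkeeping: it proves coherence simultaneously for \emph{all} truncations, setting $V_{r,l}=\E{\trunc{\geq -l}\LDERF\QCPBK{h}\shv{M}_0}{(h^*\shv{N}_0)[r+l]}$ and $W_{r,l}=\E{\COHO{-l}(\LDERF\QCPBK{h}\shv{M}_0)}{(h^*\shv{N}_0)[r]}$, and peeling off the \emph{bottom} cohomology sheaf, which produces the exact sequence $V_{r-2,l+1}\to W_{r-2,l+1}\to V_{r,l}\to V_{r-1,l+1}\to W_{r-1,l+1}$. The crucial feature is that every neighbour of $V_{r,l}$ has first index strictly less than $r$: the sheaf terms are $W$'s at shifts $r-1,r-2<n$ (covered by hypothesis), while the complex term is another $V$ at index $r-1$, handled by descending induction rather than by hypothesis. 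The induction bottoms out at $r\leq 0$, where flatness of $\shv{N}_0$ collapses $V_{r,l}$ to $\E{\LDERF\QCPBK{h}\shv{M}_0}{(h^*\shv{N}_0)[r+l]}$, coherent at all shifts by your own step three. Note that the base cases occur at arbitrarily large total shift $r+l$ and arbitrarily deep truncation $l$ --- information that no spectral sequence of a single bounded truncation can access, and exactly what your argument is missing. If you reorganize your last paragraph along these lines, the rest of your write-up goes through.
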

\begin{proof}
  Set $\cplx{M} := \LDERF \QCPBK{h}\shv{M}_0$ and $\shv{N} :=
  {h}^*\shv{N}_0$. Given
  integers $r$ and $l$ set: 
  \[
  V_{r,l} := \E{\trunc{\geq -l}\cplx{M}}{\shv{N}[r+l]} \quad
  \mbox{and} \quad 
  W_{r,l} := \E{\COHO{-l}(\cplx{M})}{\shv{N}[r]}.
  \]
  It is sufficient to prove that $V_{r,l}$ is coherent for all
  $r\leq n$ and all $l$ (the result follows by taking $l=0$). Now, for
  integers $r$ and $l$ we have a distinguished triangle in 
  $\DQCOH(X)$: 
  \[
  \xymatrix{\COHO{-l-1}(\cplx{M})[l+1] \ar[r] & \trunc{\geq
      -l-1}\cplx{M} \ar[r] & \trunc{\geq -l}\cplx{M}.}
  \]
  Applying to this triangle the functor
  $\Hom_{\Orb_X}(-,\shv{N}[r+l]\tensor_{\Orb_X}^\LDERF \LDERF
  \QCPBK{f}(-))$ we obtain an exact sequence in $\FUNP{S}$:
  \begin{equation}
    \xymatrix{V_{r-2,l+1} \ar[r] & W_{r-2,l+1}\ar[r] & V_{r,l} \ar[r] &
      V_{r-1,l+1} \ar[r] & W_{r-1,l+1}.}\label{eq:relates}
  \end{equation}
  By hypothesis, $W_{r,l}$ is coherent for all $l$ and all
  $r<n$. Thus, by Example \ref{ex:coherent_ab}, the result will follow
  from the assertion that $V_{r,l}$ is coherent for all $l$ and all
  $r<n$. By induction on $r$, the exact sequence
  \eqref{eq:relates} shows that it is sufficient to prove that
  $V_{r,l}$ is coherent for all integers $l$ and all $r\leq 0$. 

  Since $\shv{N}_0$ is $S_0$-flat, the natural transformation of
  functors $\E{\trunc{\geq 0}\cplx{L}}{\shv{N}} \to
  \E{\cplx{L}}{\shv{N}}$ is an isomorphism for all $\cplx{L} \in
  \DQCOH(X)$. Hence, if $l\geq 0$ and $r\leq 0$:  
  \[
  V_{r,l} = \E{\trunc{\geq -l}\cplx{M}}{\shv{N}[r+l]} \cong 
  \E{\cplx{M}}{\shv{N}[r+l]}. 
  \]
  Moreover, if $l<0$, then $\trunc{\geq
    -l}\cplx{M} \homotopic 0$, so $V_{r,l} \equiv 0$. Thus, it remains
  to show that $\E{\cplx{M}}{\shv{N}[l]}$ is coherent for all
  $l$. Let $I\in \QCOH{S}$,
  then Lemma \ref{lem:base_change} gives a natural isomorphism
  $\E{\cplx{M}}{\shv{N}[l]}(I) \cong
  \E{\shv{M}_0}{\shv{N}_0[l]}(g_*I)$. Example \ref{ex:coh_rest} now
  gives the result.
\end{proof}
We can now prove Theorem
\ref{mainthms:cohstk_flat}.
\begin{proof}[Proof of Theorem
  \ref{mainthms:cohstk_flat}]
  We must prove that
  $\DQCOH(X)=\TSUB{X/S}{\shv{N}[0]}$. By Lemma \ref{lem:cohstk_maps},
  we may immediately reduce to the case where $f : X \to S$ is proper and of
  finite presentation. Next, by Lemma
  \ref{lem:cohstk_bdd_red}\itemref{lem:cohstk_bdd_red:item:shv}, it 
  suffices to prove that $\E{\shv{M}}{\shv{N}[n]}$ is coherent
  for all integers $n$, and all quasicoherent $\Orb_X$-modules
  $\shv{M}$.

  Now, $\E{\shv{M}}{\shv{N}[n]} \equiv 0$ for all $n<0$ and all
  $\shv{M} \in \QCOH{X}$. We now prove, by induction on $n\geq -1$,
  that $\E{\shv{M}}{\shv{N}[n]}$ is coherent. Certainly, the result
  is true for $n=-1$. Thus, we fix $n\geq 0$ and assume the result
  has been proven for all $r<n$. 

  By Lemma \ref{lem:red_qcfp_F}, it suffices to prove that
  $\E{\shv{M}}{\shv{N}[n]}$ is coherent when $\shv{M}$ is of finite
  presentation. Thus, by standard limit methods
  \cite[Prop.~B.3]{rydh-2009}, there exists an affine scheme $S_0$ of
  finite type over $\spec \Z$, a proper morphism of algebraic stacks
  $f_0 : X_0 \to S_0$, and a morphism of affine schemes $g : S \to
  S_0$ inducing an isomorphism of algebraic stacks $h : X \to X_0
  \times_{S_0} S$. This data may be chosen so that there exists
  coherent $\Orb_{X_0}$-modules $\shv{M}_0$ and $\shv{N}_0$, with
  $\shv{N}_0$ flat over $S$, as well as isomorphisms of
  $\Orb_X$-modules $h^*\shv{N}_0 \cong \shv{N}$, $h^*\shv{M}_0 \cong
  \shv{M}$. By Theorem \ref{mainthms:coh_stk_dual} and Example
  \ref{ex:noetheriantorext} the functors $\E{\shv{M}_0}{\shv{N}_0[l]}$
  are coherent for all $l$. By Lemma \ref{lem:red_noeth_mod} and the
  inductive hypothesis, the result follows. 
  \end{proof}
\section{Coherence of $\Hom$-functors: non-flat case} 
In this section we prove Theorem
  \ref{mainthms:cohstk_noeth_fd}. For this section we also retain the notation of
\S\ref{sec:pf_cohstk_flat}. We begin by dispatching Theorem
\ref{mainthms:cohstk_noeth_fd} in the projective case.
\begin{lem}\label{lem:cohstk_proj}
  Fix an affine and noetherian scheme $S$, a morphism of schemes $f :
  X \to S$ which is projective, and $\cplx{N} \in
  \DCOH^b(X)$. Then, $\TSUB{X/S}{\cplx{N}} = \DQCOH(X)$.
\end{lem}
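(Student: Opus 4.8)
The plan is to prove $\TSUB{X/S}{\cplx{N}} = \DQCOH(X)$ when $f$ is projective by reducing the general case of $\cplx{M} \in \DQCOH(X)$ to a single, explicitly understood class of objects on $X$, namely twists of structure sheaves. By Lemma \ref{lem:cohstk_bdd_red}\itemref{lem:cohstk_bdd_red:item:above}, since $\TSUB{X/S}{\cplx{N}}$ is triangulated and closed under small direct sums, it is enough to show that $\DCAT^-_{\mathrm{QCoh}}(X) \subset \TSUB{X/S}{\cplx{N}}$. Working with truncations and cones in the standard way, this further reduces to showing that every single quasicoherent sheaf $\shv{M}$ lies in $\TSUB{X/S}{\cplx{N}}$, i.e.\ that $\E{\shv{M}}{\cplx{N}[r]}$ is coherent for all $r \in \Z$.

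Because $f$ is projective, I would fix a relatively very ample line bundle $\Orb_X(1)$ and exploit the classical fact that every coherent sheaf admits a resolution by sums of negative twists $\Orb_X(-m)$ for $m \gg 0$; for a general quasicoherent $\shv{M}$ one first writes $\shv{M}$ as a filtered colimit of coherent subsheaves and uses that $\TSUB{X/S}{\cplx{N}}$ behaves well under the colimit (via Examples \ref{ex:coh_lims} and \ref{ex:coherent_ab}, exactly as in the proof of Lemma \ref{lem:red_qcfp_F}). This reduces the problem to showing $\Orb_X(-m) \in \TSUB{X/S}{\cplx{N}}$ for each $m$. For these generators the computation is direct: by the projection formula \eqref{eq:projection_formula} and the adjunction \eqref{eq:global_trivial_duality}, the functor $\E{\Orb_X(-m)}{\cplx{N}[r]}$ becomes $\Hom_{\Orb_S}(\Orb_S, \RDERF\QCPSH{f}(\cplx{N}(m)[r] \tensor^\LDERF_{\Orb_X} \LDERF\QCPBK{f}(-)))$, which by the projection formula is computed by the complex $\RDERF\QCPSH{f}(\cplx{N}(m))$ twisted against $(-)$.

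The key point is that $\RDERF\QCPSH{f}(\cplx{N}(m))$, for $\cplx{N} \in \DCOH^b(X)$ and $f$ projective over a noetherian base, lies in $\DCOH^b(S)$, so it is represented by a bounded complex of finitely generated $\Orb_S$-modules; and since $\cplx{N}$ is bounded we may additionally arrange it to be flat term-by-term. Then the functor $I \mapsto H^r\bigl(\RDERF\QCPSH{f}(\cplx{N}(m)) \tensor^\LDERF_{\Orb_S} I\bigr)$ is exactly of the form treated in Example \ref{ex:noetheriancmplx} (or \ref{ex:noetheriantorext}), hence coherent. This handles the generators and closes the argument.

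The main obstacle I anticipate is bookkeeping the cohomological bounds and the interaction between the derived tensor $\cplx{N} \tensor^\LDERF \LDERF\QCPBK{f}(-)$ and the global sections functor so that the whole composite is genuinely corepresentable by a \emph{bounded} complex of finitely generated (ideally flat) $\Orb_S$-modules---this is where the projectivity and boundedness of $\cplx{N}$ are essential, and it is the step one must not gloss over. Once the generators $\Orb_X(-m)$ are in hand, everything else is a formal dévissage using the triangulated and limit-closure properties of $\TSUB{X/S}{\cplx{N}}$ established in Lemmas \ref{lem:cohstk_bdd_red} and \ref{lem:red_qcfp_F} and the stability results of Examples \ref{ex:coherent_ab} and \ref{ex:coh_lims}.
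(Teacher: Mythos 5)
Your endgame coincides with the paper's: taking twists $\Orb_X(-m)$ as generators, rewriting $\E{\Orb_X(-m)}{\cplx{N}[r]}$ via the projection formula \eqref{eq:projection_formula} as $\COHO{0}(\RDERF\Gamma(X,\cplx{N}(m)[r])\tensor^{\LDERF}_{\Orb_S}(-))$ with $\RDERF\Gamma(X,\cplx{N}(m)) \in \DCOH^b(S)$, and concluding by Example \ref{ex:noetheriantorext}, is exactly the paper's computation with $\shv{L}=\Orb_X(-m)$. But there is a genuine gap earlier, at the claim that ``working with truncations and cones in the standard way'' reduces $\DQCOH^-(X) \subset \TSUB{X/S}{\cplx{N}}$ to the case of a single quasicoherent sheaf. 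D\'evissage through the triangles $\COHO{-l-1}(\cplx{M})[l+1] \to \trunc{\geq -l-1}\cplx{M} \to \trunc{\geq -l}\cplx{M}$ only handles \emph{bounded} complexes; to recover an unbounded-below $\cplx{M}$ you would need $\TSUB{X/S}{\cplx{N}}$ to be closed under the homotopy limit $\cplx{M} \homotopic \holim{l}\trunc{\geq -l}\cplx{M}$, and the functor $\Hom_{\Orb_X}(-,\cplx{N}\tensor^{\LDERF}_{\Orb_X}\LDERF\QCPBK{f}I)$ does not convert such homotopy limits in the first variable into anything controlled by the truncations. The paper performs this reduction only in Lemma \ref{lem:cohstk_bdd_red}\itemref{lem:cohstk_bdd_red:item:shv}, under the hypothesis that $\cplx{N}$ has finite tor-dimension over $S$, which lets one replace $\cplx{M}$ by a truncation; in Lemma \ref{lem:cohstk_proj} no such hypothesis is available---the lemma exists precisely to feed the non-flat Theorem \ref{mainthms:cohstk_noeth_fd}---so $\cplx{N}\tensor^{\LDERF}_{\Orb_X}\LDERF\QCPBK{f}I$ can be unbounded below and $\E{\cplx{M}}{\cplx{N}[r]}$ genuinely sees all of $\cplx{M}$. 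The same issue re-enters when you resolve a coherent sheaf by twists: without regularity the resolution is unbounded below, so triangulated closure plus direct sums does not apply naively term-by-term; one must realize the object as an honest bounded-above complex with terms in the generating class and write it as the homotopy colimit of its brutal truncations $\sigma^{\geq -n}$, which are finite complexes.

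This is exactly what the paper extracts from the ample family: by [SGA6 II.2.2.9], every $\cplx{M} \in \DQCOH^-(X)$ is quasi-isomorphic to a bounded-above complex whose terms are direct sums of shifts of line bundles, so $\cplx{M}$ lies in the triangulated, coproduct-closed subcategory generated by line bundles with no passage through cohomology sheaves at all; your detour through filtered colimits of coherent subsheaves (the $\shv{Q}_1,\shv{Q}_2$ device of Lemma \ref{lem:red_qcfp_F}) then becomes unnecessary. Your reduction to sheaves could in principle be repaired on a separated noetherian scheme by first representing $\cplx{M}$ by an actual complex of quasicoherent sheaves (B\"okstedt--Neeman) and then using $\sigma$-truncations, but that representation is a nontrivial input of the same caliber as the SGA6 resolution, and your text supplies neither. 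Finally, the parenthetical that one may arrange $\RDERF\QCPSH{f}(\cplx{N}(m))$ to be flat term-by-term is not available: a bounded complex of finite flat modules is perfect, which fails when $\cplx{N}$ has infinite tor-dimension over $S$; fortunately Example \ref{ex:noetheriantorext} gives coherence without flatness, so this slip is harmless.
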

\begin{proof}
  By Lemma \ref{lem:cohstk_bdd_red}\itemref{lem:cohstk_bdd_red:item:above}, it is sufficient to show that
  $\DQCOH^-(X) \subseteq \TSUB{X/S}{\cplx{N}}$. Since $f$ is a
  projective morphism and $S$ is an affine and noetherian scheme, $f$
  has an ample family of line bundles. It now follows from
  \cite[II.2.2.9]{MR0354655} that if $\cplx{M} \in \DQCOH^-(X)$, then
  $\cplx{M}$ is quasi-isomorphic to a complex $\cplx{Q}$ whose terms
  are direct sums of shifts of line bundles. Thus, by Lemma
  \ref{lem:cohstk_bdd_red}, it is sufficient to prove that if
  $\shv{L} \in \COH{X}$ is a line bundle, then $\shv{L}[0] \in
  \TSUB{X/S}{\cplx{N}}$. Fix $n\in \Z$, then we have
  natural isomorphisms:
  \begin{align*}
    \E{\shv{L}[0]}{\cplx{N}[n]}&:=\Hom_{\Orb_X}(\shv{L}[0],
    \cplx{N}[n]
    \tensor_{\Orb_X}^\LDERF \LDERF \QCPBK{f}(-))\\
    &\cong \COHO{0}(\RDERF \Gamma(X,(\shv{L}^{-1}[0]
    \tensor_{\Orb_X}^\LDERF \cplx{N}[n]
    \tensor_{\Orb_X}^\LDERF \LDERF \QCPBK{f}(-))\\
    &\cong \COHO{0}(\RDERF\Gamma(X,\shv{L}^{-1}[0]
    \tensor_{\Orb_X}^\LDERF \cplx{N}[n]) \tensor_{\Orb_S}^\LDERF
    (-)),
  \end{align*}
  the latter isomorphism is given by the projection formula
  \eqref{eq:projection_formula}. Since $\shv{L}$ is $\Orb_X$-flat and 
  $\cplx{N} \in \DCOH^b(X)$, it follows that
  $\shv{L}^{-1}[0]\tensor_{\Orb_X}^\LDERF \cplx{N}[n] \in  
  \DCOH^b(X)$. Hence,  $\RDERF
  \Gamma(X,\shv{L}^{-1}[0]\tensor_{\Orb_X}^\LDERF \cplx{N}[n]) \in
  \DCOH^b(S)$ \cite[III.2.2]{MR0354655}. By Example
  \ref{ex:noetheriantorext}, $\E{\shv{L}}{\cplx{N}[n]}$ is
  coherent, so $\shv{L}[0]\in \TSUB{X/S}{\cplx{N}}$.    
\end{proof}
We can now prove Theorem
\ref{mainthms:cohstk_noeth_fd}.
\begin{proof}[Proof of Theorem
  \ref{mainthms:cohstk_noeth_fd}]
    By inducting on the length of the complex $\cplx{N}$, it is
  sufficient to prove the result when $\cplx{N}$ is only
  supported in cohomological degree $0$. In particular, there exists a
  closed immersion $i : Y \to X$, such that the composition $f\circ
  i$ is proper, together with a coherent $\Orb_{Y}$-module
  $\shv{N}_0$ and a quasi-isomorphism $i_*\shv{N}_0[0] \cong
  \cplx{N}$. By Lemma \ref{lem:cohstk_maps}, it suffices to
  prove that $\TSUB{Y/S}{\shv{N}_0[0]} =
  \DQCOH(Y)$. Hence, we have reduced the claim to the
  case where the morphism $f$ is proper and where $\cplx{N}
  \homotopic \shv{N}[0]$ for some $\shv{N}\in \COH{X}$.
  
  Now let $\CSUB{X/S} \subset \COH{X}$ denote the full subcategory
  with objects those $\shv{N} \in \COH{X}$ such that
  $\TSUB{X/S}{\shv{N}[0]} = \DQCOH(X)$. By the 5-Lemma,
  it is plain to see that $\CSUB{X/S}$ is an exact subcategory (in the
  sense of \cite[III.3.1]{EGA}). We now prove by
  noetherian induction on the closed substacks of $X$ that $\CSUB{X/S}
  = \COH{X}$. By virtue of Lemma \ref{lem:cohstk_maps} and the
  technique of d\'evissage \cite[Proof of III.3.2]{EGA}, it is sufficient to
  prove that $\CSUB{X/S} = \COH{X}$ when $X$ is integral and
  $\TSUB{X/S}{\shv{Q}[0]} = \DQCOH(X)$ for all coherent
  $\Orb_X$-modules $\shv{Q}$ such that $\supp \shv{Q} \subsetneq |X|$. 

  So, we fix $\shv{N} \in \COH{X}$. Combining Chow's Lemma
  \cite[II.5.6.1]{EGA} with \cite[Thm.~2.7]{MR1844577}, 
  there exists a morphism $p : X' \to X$ that is proper, surjective, and
  generically finite such that $X'$ is a projective
  $S$-scheme. The diagonal of $X$ is finite, thus $X'':= X'\times_X X'$ is
  also a projective $S$-scheme, denote by $q
  : X'' \to X$ the induced morphism. By Lemma \ref{lem:cohstk_proj}
  we deduce that $\TSUB{X'/S}{p^*\shv{N}[0]} =
  \DQCOH(X')$ and $\TSUB{X''/S}{q^*\shv{N}[0]} = \DQCOH(X'')$. 

  Next, by Lemma \ref{lem:cohstk_maps}, $\TSUB{X/S}{\RDERF p_*
    p^*\shv{N}}=\DQCOH(X)$. Also, $p$ is generically finite, thus
  generically affine, so the support of the cohomology sheaves of
  $\trunc{\geq 1}(\RDERF p_* p^*\shv{N})$ vanishes generically. In
  particular, by noetherian induction, we deduce that
  $\TSUB{X/S}{\trunc{\geq 1}(\RDERF p_* p^*\shv{N})}=\DQCOH(X)$, thus
  $\TSUB{X/S}{p_*p^*\shv{N}[0]} = \DQCOH(X)$. An identical analysis
  for $q$ also proves that $\TSUB{X/S}{q_*q^*\shv{N}[0]} = \DQCOH(X)$
  and so $p_*p^*\shv{N}$ and $q_*q^*\shv{N} \in \CSUB{X/S}$. Hence,
  the equalizer $\tilde{\shv{N}}$ of the two maps $p_*p^*\shv{N}
  \rightrightarrows q_*q^*\shv{N}$ belongs to $\CSUB{X/S}$. Of course,
  there is also a natural map $\theta : \shv{N} \to \tilde{\shv{N}}$. But
  $X$ is integral, thus there is a dense open $U \subset X$ such that
  $p^{-1}(U) \to U$ is flat. By flat descent, $\theta$ is
  an isomorphism over $U$. So the exactness of the subcategory
  $\CSUB{X/S} \subset \COH{X}$ and d\'evissage now prove that
  $\shv{N} \in \CSUB{X/S}$.
\end{proof}
\section{Applications}
\subsection{Representability of $\Hom$-spaces}\label{sec:cohobc} 
As promised, Theorem \ref{mainthms:bdd_hom_shf} is now completely 
elementary. 
\begin{proof}[Proof of Theorem \ref{mainthms:bdd_hom_shf}]
  The latter claim follows from the former by standard limit methods.
  Now, $\Homstk_{\Orb_X/S}(\shv{M},\shv{N})$ is an \'etale sheaf,
  thus it is sufficient to prove the result in the case where $S$ is
  affine. By Theorem \ref{mainthms:cohstk_flat}, the functor:
  \[
  \Hom_{\Orb_X}(\shv{M},\shv{N}\tensor_{\Orb_X}{f}^*(-)) : \QCOH{S} \to \AB
  \]
  is coherent. The flatness of
  $\shv{N}$ over $S$ also shows that this functor is left-exact, so by
  Example \ref{ex:left_exact_coh}, it is corepresentable by a
  quasicoherent $\Orb_S$-module $Q_{\shv{M},\shv{N}}$. So, fixing an
  affine $S$-scheme $(T 
  \xrightarrow{\tau} S)$, there are natural  
  isomorphisms: 
  \begin{align*}
    \Homstk_{\Orb_X/S}(\shv{M},\shv{N})[T\xrightarrow{\tau} S] &=
    \Hom_{\Orb_{X_T}}(\tau_X^*\shv{M},\tau_X^*\shv{N}) \cong
    \Hom_{\Orb_X}(\shv{M},(\tau_X)_*\tau_X^*\shv{N})\\ 
    &\cong \Hom_{\Orb_X}(\shv{M},\shv{N}\tensor_{\Orb_X}
    {f}^*[\tau_*\Orb_T]) \\
    &\cong \Hom_{\Orb_S}(Q_{\shv{M},\shv{N}},\tau_*\Orb_T)\\
    &\cong
    \Hom_{\Orb_S\text{-}\mathrm{Alg}}(\mathrm{Sym}^\bullet_{\Orb_S}
    Q_{\shv{M},\shv{N}},\tau_*\Orb_T)\\
    &\cong \Hom_{\SCH{S}}(T\xrightarrow{\tau} S,\underline{\spec}_{\Orb_S}
    \mathrm{Sym}^\bullet_{\Orb_S} Q_{\shv{M},\shv{N}}).
  \end{align*}
  The natural isomorphism above extends to all $S$-schemes as
  $\Homstk_{\Orb_X/S}(\shv{M},\shv{N})$ is an \'etale sheaf. The
  result follows.  
\end{proof}
\subsection{Cohomology and base change}
To prove Theorem \ref{mainthm:cohobc}, we will consider some
refinements of the vanishing results of A. Ogus and G. Bergman
\cite{MR0302633} that occur in the setting of finitely generated
functors. So, we fix a ring $A$ and an $A$-linear functor $Q : \MOD{A}
\to \MOD{A}$. Define
\[
 \Van(Q) = \{ \mathfrak{p}\in \spec A \suchthat 
 Q(N) = 0 \quad \forall N\in \MOD{A_{\mathfrak{p}}} \}.    
 \]
Finitely generated functors immediately demonstrate their utility. 
\begin{prop}\label{prop:nakayama}
  Fix a ring $A$ and an $A$-linear functor $F : \MOD{A} \to
  \MOD{A}$ which preserves direct limits. If the functor $F$ is
  finitely generated, then the set $\Van(F) \subset \spec A$ is Zariski open.
\end{prop}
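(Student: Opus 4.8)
The plan is to exploit finite generation in a Nakayama-like fashion: the vanishing of the whole localized functor should be detectable by the vanishing of a single element, namely the image of the generating datum. So I would fix a generator $(I,\eta)$ for $F$, and for each $\mathfrak{p}\in\spec A$ write $p_{\mathfrak{p}}:I\to I\otimes_A A_{\mathfrak{p}}$ for the natural localization map. The first step is to establish the criterion: $\mathfrak{p}\in\Van(F)$ if and only if $F(p_{\mathfrak{p}})(\eta)=0$ in $F(I\otimes_A A_{\mathfrak{p}})$. The forward implication is immediate, since $I\otimes_A A_{\mathfrak{p}}$ is itself an $A_{\mathfrak{p}}$-module. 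For the converse, the restriction $F_{A_{\mathfrak{p}}}$ is generated by $(I\otimes_A A_{\mathfrak{p}},\,F(p_{\mathfrak{p}})(\eta))$ by Example \ref{ex:fg_rest}; and a finitely generated functor whose generating element is zero vanishes identically, because for every module $M$ the group $F(M)$ is the image of $\Hom_A(I\otimes_A A_{\mathfrak{p}},M)$ under $g\mapsto F(g)(F(p_{\mathfrak{p}})(\eta))=0$.

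Next I would convert this pointwise criterion into an open condition using preservation of direct limits. Since $A_{\mathfrak{p}}=\varinjlim_{s\notin\mathfrak{p}}A_s$ and $F$ commutes with filtered colimits, there is an identification $F(I\otimes_A A_{\mathfrak{p}})\cong\varinjlim_{s\notin\mathfrak{p}}F(I\otimes_A A_s)$ under which $F(p_{\mathfrak{p}})(\eta)$ is the image of the compatible family of elements $F(p_s)(\eta)$, where $p_s:I\to I\otimes_A A_s$. Hence $F(p_{\mathfrak{p}})(\eta)=0$ if and only if $F(p_s)(\eta)=0$ already at some finite stage, i.e.\ for some $s\notin\mathfrak{p}$.

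Finally I would spread this vanishing across the basic open set $D(s)=\{\mathfrak{q}\in\spec A : s\notin\mathfrak{q}\}$. For any $\mathfrak{q}\in D(s)$ the element $s$ is invertible in $A_{\mathfrak{q}}$, so $A\to A_{\mathfrak{q}}$ factors through $A_s$; tensoring with $I$ exhibits $p_{\mathfrak{q}}$ as $p_s$ followed by the further localization $I\otimes_A A_s\to I\otimes_A A_{\mathfrak{q}}$. Applying $F$ and using functoriality gives $F(p_{\mathfrak{q}})(\eta)=0$, whence $\mathfrak{q}\in\Van(F)$ by the criterion of the first step. Thus every $\mathfrak{p}\in\Van(F)$ admits a basic open neighbourhood $D(s)\subseteq\Van(F)$, and so $\Van(F)$ is Zariski open.

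The genuine content is concentrated in the first step, namely the observation that finite generation reduces the vanishing of an entire functor to the vanishing of one element; this is the ``Nakayama'' reduction that the proposition's label advertises. Once it is in hand, the remaining two steps are formal: the second is just commutation with filtered colimits, and the third is the bare factorization of localization maps together with functoriality. I therefore do not anticipate any real obstacle beyond setting up the generator correctly and keeping track of which pullbacks are the localization maps.
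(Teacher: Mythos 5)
Your proposal is correct and takes essentially the same route as the paper's proof: both fix a generator $(I,\eta)$, observe that $\mathfrak{p}\in\Van(F)$ forces the image of $\eta$ in $F(I\otimes_A A_{\mathfrak{p}})$ to vanish, use preservation of direct limits along $A_{\mathfrak{p}}=\varinjlim_{a\notin\mathfrak{p}}A_a$ to descend that vanishing to some finite stage $a\notin\mathfrak{p}$, and invoke Example \ref{ex:fg_rest} to conclude that $F$ vanishes over $D(a)$. The only difference is presentational: you isolate the ``generator element vanishes iff the localized functor vanishes'' criterion and spread it via the factorization of $p_{\mathfrak{q}}$ through $p_a$, whereas the paper compresses these steps into the single assertion $F_{A_a}\equiv 0$.
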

\begin{proof}
  Fix a generator $(I,\eta)$ for $F$ and a prime ideal $\mathfrak{p}
  \ideal A$ such that $F_{A_{\mathfrak{p}}} \equiv 0$. For $a\in A$
  there is the localisation morphism $l_a : I \to I_a$
  (resp. $l_{\mathfrak{p}} : I \to I_\mathfrak{p}$) and we set $\eta_a =
  (l_a)_*\eta$ (resp. $\eta_{\mathfrak{p}} =
  (l_\mathfrak{p})_*\eta$). Since $F(I_{\mathfrak{p}}) = 0$, it
  follows that
  $\eta_{\mathfrak{p}} = 0$. However, as $I_{\mathfrak{p}} =
  \varinjlim_{a\notin \mathfrak{p}} I_a$ and the functor $F$ commutes
  with direct limits of $A$-modules, there exists $a\notin
  \mathfrak{p}$ such that $\eta_a = 0$ in $F(I_a)$. Since the pair
  $(I_a,\eta_a)$ generates $F_{A_a}$, we have that $F_{A_a} \equiv 0$.
\end{proof}
We now record for future reference a result that is likely well-known,
though we are unaware of a reference. For an $A$-module $N$, define
$Q^N : \MOD{A} \to \SETS$ to be the 
functor $Q(-\tensor_A N) : \MOD{A} \to \SETS$.      
For another ring $C$, a $C$-module $K$, and a functor $G :
\MOD{A} \to \MOD{C}$, there is the functor $G\tensor_C K :
\MOD{A} \to \MOD{C} : M \mapsto G(M)\tensor_C K$. Given an
$A$-linear functor $H : \MOD{A} \to \MOD{A}$, and any $A$-module $N$,
there is a natural transformation of functors 
\[
\delta_{H,N} : H\tensor_A N \Rightarrow H^N.
\]
\begin{prop}\label{prop:lazardlem}
  Fix a ring $A$, and an $A$-linear functor $F : \MOD{A} \to
  \MOD{A}$. Suppose that the functor $F$ preserves direct
  limits. Then, for any flat $A$-module $M$, the natural 
  transformation
  \[
  \delta_{F,M} : F\tensor_A M \Rightarrow F^M
  \]
  is an isomorphism of functors. 
\end{prop}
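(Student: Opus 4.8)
The plan is to recognise $\delta_{F,N}$ as a natural transformation in the variable $N$ that is compatible with all colimits in $N$, and then to run a Lazard-type dévissage: verify the claim for $N = A$, bootstrap to arbitrary free modules by additivity, and finally pass to a flat $M$ by writing it as a filtered colimit of finitely generated free modules. First I would record that $\delta_{F,N}$ is functorial in $N$: a homomorphism $\phi : N \to N'$ induces compatible maps $F\tensor_A N \to F\tensor_A N'$ and $F^N \to F^{N'}$ (the latter via $(-)\tensor_A \phi$), and the evident square commutes, directly from the construction of $\delta_{F,N}$ through the maps $\iota_n : M' \to M'\tensor_A N$, $m \mapsto m\tensor n$, to which one applies $F$.

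For the base case $N = A$, both $F\tensor_A A$ and $F^A = F(-\tensor_A A)$ are canonically identified with $F$, and $\delta_{F,A}$ is the identity under these identifications, hence an isomorphism. To reach free modules I would first observe that $F$ preserves arbitrary direct sums: $F$ is additive (being $A$-linear), so it preserves finite direct sums, and an arbitrary direct sum is the filtered colimit of its finite subsums, which $F$ preserves by hypothesis. Both functors $N \mapsto F\tensor_A N$ and $N \mapsto F^N$ therefore carry direct sums to direct sums, and $\delta$ is compatible with these decompositions (the summand inclusions being exactly the $\iota_{e_s}$); combined with the base case this shows that $\delta_{F,N}$ is an isomorphism for every free $A$-module $N$. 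Next I would check that $\delta_{F,-}$ commutes with filtered colimits in $N$: writing $N = \varinjlim_i N_i$, one has $F\tensor_A N = \varinjlim_i (F\tensor_A N_i)$ since tensor commutes with colimits, while $F^N(M') = F(M'\tensor_A N) = F(\varinjlim_i M'\tensor_A N_i) = \varinjlim_i F^{N_i}(M')$, using precisely that $F$ preserves direct limits, and $\delta_{F,N} = \varinjlim_i \delta_{F,N_i}$ by naturality.

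Finally, by Lazard's structure theorem every flat $A$-module $M$ is a filtered colimit of finitely generated free $A$-modules $L_i$; each $\delta_{F,L_i}$ is an isomorphism by the free case, and a filtered colimit of isomorphisms is an isomorphism, so $\delta_{F,M}$ is an isomorphism, as desired. The argument is essentially formal once the two preservation properties of $F$ are in hand, and the only genuine external input is Lazard's theorem. The one place requiring care — and where I expect the main, if routine, bookkeeping to lie — is the verification that the colimit comparison isomorphisms intertwine the two constructions entering $\delta$, since it is exactly this compatibility with direct sums and filtered colimits in $N$ that licenses reducing the flat case all the way down to $N = A$.
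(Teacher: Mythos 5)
Your proof is correct and follows essentially the same route as the paper: establish the isomorphism for (finitely generated) free modules via additivity of $F$, then write a flat module as a filtered colimit of finite free modules by Lazard's theorem and use that both $F$ and the tensor product preserve direct limits. Your version merely spells out in more detail the naturality and colimit-compatibility checks that the paper leaves implicit.
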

\begin{proof}
  First, assume that the $A$-module $M$ is finite free. The functor
  $F$ commutes with finite products, thus the natural transformation
  $\delta_{F,M}$ induces an isomorphism. For the general case, by
  Lazard's Theorem 
  \cite{MR0168625}, we may write $M = \varinjlim_i P_i$, where each
  $P_i$ is a finite free $A$-module. Since tensor products commute
  with direct limits, as does the functor $F$, the Proposition
  follows from the case already considered.  
\end{proof}
\begin{cor}\label{cor:qf_bdd}
  Fix a noetherian ring $R$ and a bounded $R$-linear functor $G :
  \MOD{R} \to \MOD{R}$ that commutes with direct limits. 
  \begin{enumerate}
  \item For any quasi-finite $R$-algebra $R'$, the functor $G_{R'}$ is
    bounded. 
  \item For any $\mathfrak{p} \in \spec R$, the functor
    $G_{R_{\mathfrak{p}}}$ is bounded.  
  \end{enumerate}
\end{cor}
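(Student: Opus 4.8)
Recall that boundedness of such a $G$ is measured fibrewise: the operative condition is that there be an integer $n$ with $\dim_{\kappa(\mathfrak{p})} G(\kappa(\mathfrak{p})) \leq n$ for every $\mathfrak{p} \in \spec R$, where $\kappa(\mathfrak{p})$ is regarded as an $R$-module. Note that $\mathfrak{p}$ acts as zero on $G(\kappa(\mathfrak{p}))$ by $R$-linearity, so the latter is indeed a $\kappa(\mathfrak{p})$-vector space. The plan is to show in both cases that the fibrewise dimensions of the restricted functor are bounded by those of $G$, so that the same $n$ works. By the discussion following Lemma \ref{lem:runar}, $G_{R'}$ and $G_{R_{\mathfrak{p}}}$ are again linear functors commuting with direct limits, so boundedness is the only property left to verify; the direct-limit hypothesis is carried along precisely so that the conclusion functors inherit it for downstream use.

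I would dispatch the localisation case (2) first, as it is the cleanest. The primes of $R_{\mathfrak{p}}$ are exactly the primes $\mathfrak{q} \subseteq \mathfrak{p}$ of $R$, and for such a prime the residue field $\kappa(\mathfrak{q})$, together with its module structure, is unchanged when computed over $R_{\mathfrak{p}}$. Since $G_{R_{\mathfrak{p}}}$ is $G$ precomposed with restriction of scalars, $G_{R_{\mathfrak{p}}}(\kappa(\mathfrak{q})) = G(\kappa(\mathfrak{q}))$, whence $\dim_{\kappa(\mathfrak{q})} G_{R_{\mathfrak{p}}}(\kappa(\mathfrak{q})) = \dim_{\kappa(\mathfrak{q})} G(\kappa(\mathfrak{q})) \leq n$. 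As the residue fields occurring for $R_{\mathfrak{p}}$ form a subset of those occurring for $R$, the functor $G_{R_{\mathfrak{p}}}$ is bounded by the same $n$.

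For the quasi-finite case (1), I would fix $\mathfrak{q}' \in \spec R'$ and set $\mathfrak{q} = \mathfrak{q}' \cap R$. Quasi-finiteness means $\mathfrak{q}'$ is isolated in its fibre $\spec(R' \otimes_R \kappa(\mathfrak{q}))$, which is of finite type over $\kappa(\mathfrak{q})$; hence the local ring there is a finite $\kappa(\mathfrak{q})$-algebra and $e := [\kappa(\mathfrak{q}') : \kappa(\mathfrak{q})] < \infty$. The composite $R \to \kappa(\mathfrak{q}')$ kills $\mathfrak{q}$ and so factors through $\kappa(\mathfrak{q})$; thus as an $R$-module $\kappa(\mathfrak{q}') \cong \kappa(\mathfrak{q})^{\oplus e}$, and additivity of $G$ gives $G(\kappa(\mathfrak{q}')) \cong G(\kappa(\mathfrak{q}))^{\oplus e}$. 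Comparing dimensions over the two fields, $e \cdot \dim_{\kappa(\mathfrak{q}')} G_{R'}(\kappa(\mathfrak{q}')) = \dim_{\kappa(\mathfrak{q})} G(\kappa(\mathfrak{q}')) = e \cdot \dim_{\kappa(\mathfrak{q})} G(\kappa(\mathfrak{q}))$; cancelling $e$ yields $\dim_{\kappa(\mathfrak{q}')} G_{R'}(\kappa(\mathfrak{q}')) \leq n$, so $G_{R'}$ is bounded by the same $n$.

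The step I expect to require the most care is the bookkeeping in the quasi-finite case: one must check that the $\kappa(\mathfrak{q}')$-vector-space structure induced by $R'$-linearity of $G_{R'}$ genuinely restricts to the $\kappa(\mathfrak{q})$-structure coming from $R$-linearity of $G$, so that the factor $e = [\kappa(\mathfrak{q}'):\kappa(\mathfrak{q})]$ may legitimately be cancelled, together with extracting the finiteness $e < \infty$ from quasi-finiteness. Everything else is formal additivity. Had boundedness instead been phrased through arbitrary finitely generated modules, the crux would shift to comparing $G(\kappa(\mathfrak{p}))$ with $G(M) \tensor_R \kappa(\mathfrak{p})$, and there the hypothesis that $G$ commutes with direct limits, together with Proposition \ref{prop:lazardlem}, would carry the argument.
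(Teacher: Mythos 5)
There is a genuine gap, and it is at the very first step: you have misidentified the definition of \emph{bounded}. In this paper (following Ogus--Bergman, and as pinned down by the internal usage in Lemma \ref{lem:nakayama2}, where ``universally bounded'' means that each $G_{R'}$ is bounded and is deduced from the existence of a finitely presented generator) a bounded $R$-linear functor is one carrying finitely generated (coherent) modules to finitely generated modules; it is \emph{not} a uniform bound $n$ on $\dim_{\kappa(\mathfrak{p})}G(\kappa(\mathfrak{p}))$ over all primes. One can also see this from how the corollary is consumed: Corollary \ref{cor:nakayama_new} uses part (2) to feed $G_{R_{\mathfrak{q}}}$ into \cite[Thm.~2.1]{MR0302633}, a Nakayama-type vanishing theorem that requires finite generation of the values $G(M)$, for which a fibrewise dimension bound is of no use. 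Under the correct definition your two verifications say nothing: the genuine difficulty is that a finitely generated $R'$-module or $R_{\mathfrak{p}}$-module is typically \emph{not} finitely generated as an $R$-module (already $\kappa(\mathfrak{p})=R_{\mathfrak{p}}/\mathfrak{p}R_{\mathfrak{p}}$ fails for non-maximal $\mathfrak{p}$), so boundedness of $G$ cannot be applied to the restricted functors directly, and residue fields play no role in the statement at all. A tell that something is off is that your argument never uses noetherianity or the hypothesis that $G$ commutes with direct limits (you explicitly set the latter aside as ``carried along for downstream use''), and it would prove the claim over any ring with a uniformity assumption the paper never makes.

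For comparison, the paper's actual argument runs as follows. For (1), Zariski's Main Theorem factors $R \to R'$ as $R \to \widetilde{R} \to R'$ with $R \to \widetilde{R}$ finite and $\spec R' \to \spec\widetilde{R}$ an open immersion; the finite case is immediate (a finitely generated $\widetilde{R}$-module is finitely generated over $R$, and $G(M)$ finitely generated over $R$ is finitely generated over $\widetilde{R}$), so one reduces to an open immersion. There, any coherent $R'$-module $M'$ admits a coherent $R$-model $M$ with $M\tensor_R R' \cong M'$, and since $R \to R'$ is flat, Proposition \ref{prop:lazardlem} --- this is exactly where commuting with direct limits is indispensable, via Lazard's theorem --- yields $G(M') \cong G(M)\tensor_R R'$, which is coherent because $G$ is bounded. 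Part (2) is handled the same way: a coherent $R_{\mathfrak{p}}$-module descends to an $R_f$-module for some $f\notin\mathfrak{p}$, and one applies part (1) to the quasi-finite map $R \to R_f$ together with Proposition \ref{prop:lazardlem} for the flat map $R_f \to R_{\mathfrak{p}}$. Your proposal's closing remark gestures at this (``had boundedness instead been phrased through arbitrary finitely generated modules\dots''), but that is the actual statement, and the flat base change $G(M)\tensor_R R' \cong G(M\tensor_R R')$ from Proposition \ref{prop:lazardlem} --- not a comparison of $G(\kappa(\mathfrak{p}))$ with $G(M)\tensor_R\kappa(\mathfrak{p})$, which would require flatness of $\kappa(\mathfrak{p})$ over $R$ --- is the step that carries the proof.
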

\begin{proof}
  For (1), by Zariski's Main Theorem \cite[{IV}.18.12.13]{EGA}, the
  homomorphism $R \to R'$ factors as $R \to \widetilde{R} \to R'$ where $R
  \to \widetilde{R}$ is finite and $\spec {R'} \to \spec
  \widetilde{R}$ is an open 
  immersion. Since the functor $G$ is bounded, the
  functor $G_{\widetilde{R}}$ is bounded. Thus, it suffices to consider the 
  case where $\spec R' \to \spec R$ is an open immersion. For any coherent
  $R'$-module $M'$, there exists a coherent $R$-module $M$
  together with an isomorphism of $R'$-modules $M\tensor_R R' \cong
  M'$. Since the homomorphism $R \to R'$ is
  flat, Proposition \ref{prop:lazardlem} implies that $G(M)\tensor_R
  R' \cong G(M\tensor_R R') \cong G(M')$. The functor $G$ is
  bounded, thus the $R'$-module $G(M)\tensor_R R'$ is coherent, giving
  the claim.

  For (2), fix a coherent $R_{\mathfrak{p}}$-module $N$, then for
  some $f\in R-\mathfrak{p}$ there exists an $R_f$-module $L$, together
  with an $R_{\mathfrak{p}}$-module isomorphism $L\tensor_{R_f}
  R_{\mathfrak{p}} \cong N$. By Proposition \ref{prop:lazardlem}, we have
  $G(N)\cong G(L)\tensor_{R_f} R_{\mathfrak{p}}$. By (1), since $R \to
  R_f$ is quasi-finite, it follows that $G_{R_f}$ is bounded. Thus,
  $G(N)$ is a coherent $R_{\mathfrak{p}}$-module.
\end{proof}
\begin{rem}
  Corollary \ref{cor:qf_bdd}(2) also holds for the henselization and
  strict henselization of $R_{\mathfrak{p}}$.
\end{rem}
Fix a noetherian ring $R$ and a half-exact, bounded, $R$-linear
functor $F : \COH{R} \to \COH{R}$. A. Ogus and G. Bergman show in
\cite[Thm.~2.1]{MR0302633} that if for all closed points $\mathfrak{q}
\in \spec R$ we have $F(\kappa(\mathfrak{q})) = 0$, then $F$ is the
zero functor. We have the following amplification. 
\begin{cor}\label{cor:nakayama_new} 
  Fix a noetherian ring $R$ and a bounded, half-exact, $R$-linear
  functor $G :   \MOD{R} \to \MOD{R}$ which commutes with direct
  limits. Then,  
  \[
  \Van(G) =
  \{\mathfrak{q} \in \spec R \suchthat G(\kappa(\mathfrak{q})) = 0\}.
  \]
\end{cor}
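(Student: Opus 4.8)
The plan is to prove the two inclusions separately, with the containment $\Van(G) \subseteq \{\mathfrak{q} \suchthat G(\kappa(\mathfrak{q})) = 0\}$ being immediate and the reverse inclusion carrying all the content. For the easy direction I would simply observe that $\kappa(\mathfrak{q})$ is an $R_{\mathfrak{q}}$-module, so if $\mathfrak{q} \in \Van(G)$ then $G(\kappa(\mathfrak{q})) = 0$ directly from the definition of $\Van(G)$.

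For the reverse inclusion I would fix $\mathfrak{q}$ with $G(\kappa(\mathfrak{q})) = 0$ and aim to show that the restriction $G_{R_{\mathfrak{q}}}$ is the zero functor. The key idea is to localise at $\mathfrak{q}$, so that the single residue field $\kappa(\mathfrak{q})$ under consideration becomes the residue field at the \emph{unique} closed point of $\spec R_{\mathfrak{q}}$; this is precisely what allows the ``all closed points'' hypothesis of the Ogus--Bergman theorem \cite[Thm.~2.1]{MR0302633} to be satisfied. Concretely, I would first verify that $G_{R_{\mathfrak{q}}}$ inherits every required property: it is bounded by Corollary \ref{cor:qf_bdd}(2), and since restriction of scalars along $R \to R_{\mathfrak{q}}$ is exact and preserves colimits, $G_{R_{\mathfrak{q}}}$ remains $R_{\mathfrak{q}}$-linear, half-exact, and compatible with direct limits. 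Because $R_{\mathfrak{q}}$ is local and noetherian, its spectrum has a single closed point, with residue field $\kappa(\mathfrak{q})$, and $G_{R_{\mathfrak{q}}}(\kappa(\mathfrak{q})) = G(\kappa(\mathfrak{q})) = 0$. Applying Ogus--Bergman to the induced bounded, half-exact, $R_{\mathfrak{q}}$-linear functor $G_{R_{\mathfrak{q}}} : \COH{R_{\mathfrak{q}}} \to \COH{R_{\mathfrak{q}}}$ then forces $G_{R_{\mathfrak{q}}}(M) = 0$ for every coherent $R_{\mathfrak{q}}$-module $M$.

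To conclude, I would extend this vanishing from coherent modules to arbitrary $R_{\mathfrak{q}}$-modules: any module $N$ is the filtered colimit of its finitely generated submodules, which are coherent since $R_{\mathfrak{q}}$ is noetherian, and $G_{R_{\mathfrak{q}}}$ commutes with direct limits, so $G(N) = \varinjlim G_{R_{\mathfrak{q}}}(N_i) = 0$, giving $\mathfrak{q} \in \Van(G)$. I expect the main obstacle to be the middle step: cleanly transporting boundedness (through Corollary \ref{cor:qf_bdd}), half-exactness, and direct-limit compatibility to $G_{R_{\mathfrak{q}}}$, and recognising that a local ring has a single closed point so that the strong hypothesis of Ogus--Bergman collapses to the one residue field at hand.
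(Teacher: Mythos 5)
Your proposal is correct and follows essentially the same route as the paper: both directions are handled identically, with the reverse inclusion obtained by localising at $\mathfrak{q}$, invoking Corollary \ref{cor:qf_bdd}(2) for boundedness of $G_{R_{\mathfrak{q}}}$, and applying the Ogus--Bergman theorem \cite[Thm.~2.1]{MR0302633} at the unique closed point of $\spec R_{\mathfrak{q}}$. The only difference is that you make explicit two steps the paper leaves implicit---the transfer of half-exactness and direct-limit compatibility to $G_{R_{\mathfrak{q}}}$, and the passage from vanishing on coherent $R_{\mathfrak{q}}$-modules to vanishing on all of $\MOD{R_{\mathfrak{q}}}$ via filtered colimits---both of which are needed and correctly handled.
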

\begin{proof}
  Clearly, if $\mathfrak{q} \in\Van(G)$, then $G(\kappa(\mathfrak{q}))
  = 0$. For the other inclusion, let $\mathfrak{q} \in \spec R$
  satisfy $G(\kappa(\mathfrak{q}))=0$. By Corollary \ref{cor:qf_bdd}(2),
  the functor $G_{R_{\mathfrak{q}}}$ is bounded. Thus, \cite[Thm.\
  2.1]{MR0302633} applies, giving $G_{R_{\mathfrak{q}}} \equiv
  0$, and so $\mathfrak{q} \in \Van(G)$.
\end{proof}
An $R$-linear functor $G :
\MOD{R} \to \MOD{R}$ is \fndefn{universally bounded} if for {any}
noetherian $R$-algebra $R'$, the functor $G_{R'} : \MOD{R'} \to
\MOD{R'}$ is bounded. To combine Proposition \ref{prop:nakayama} and
Corollary \ref{cor:nakayama_new}, it is useful to have the following
easily proven Lemma at hand. 
\begin{lem}\label{lem:nakayama2}
  Fix a ring $A$ and an $A$-linear functor $F : \MOD{A} \to \MOD{A}$
  preserving direct limits. If the functor $F$ is finitely generated,
  then there exists a generator $(I,\eta)$ with $I$ a finitely
  presented $A$-module. In particular, if the ring $A$ is noetherian,
  then the functor $F$ is universally bounded.
\end{lem}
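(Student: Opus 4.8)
The plan is to first extract a finitely presented generator using the hypothesis that $F$ commutes with direct limits, and then to deduce universal boundedness in the noetherian case by a $\Hom$-finiteness argument.

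For the first assertion, I would begin from an arbitrary generator $(I,\eta)$ for $F$ and write $I = \varinjlim_\lambda I_\lambda$ as a filtered colimit of finitely presented $A$-modules, with canonical maps $l_\lambda : I_\lambda \to I$. Since $F$ preserves direct limits, $F(I) = \varinjlim_\lambda F(I_\lambda)$, so the distinguished element $\eta$ is the image $(l_\lambda)_*\eta_\lambda$ of some $\eta_\lambda \in F(I_\lambda)$. The key claim is that $(I_\lambda,\eta_\lambda)$ is again a generator: given $M \in \MOD{A}$ and $\xi \in F(M)$, one chooses $f : I \to M$ with $f_*\eta = \xi$ using that $(I,\eta)$ generates, and then $f \circ l_\lambda$ satisfies $(f\circ l_\lambda)_*\eta_\lambda = f_*(l_\lambda)_*\eta_\lambda = f_*\eta = \xi$, so $\Hom_A(I_\lambda,M) \to F(M)$ is surjective. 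This produces the required generator $(I_\lambda,\eta_\lambda)$ with $I_\lambda$ finitely presented.

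For the final assertion, assume $A$ is noetherian, so that finite presentation coincides with finite generation and, by the above, $F$ admits a generator $(I,\eta)$ with $I$ finitely generated. Fix a noetherian $A$-algebra $R'$. By Example \ref{ex:fg_rest}, the functor $F_{R'}$ has generator $(I \tensor_A R', p_*\eta)$, and $I \tensor_A R'$ is finitely generated over $R'$. For any coherent $R'$-module $M$ the generating map is an $R'$-linear surjection $\Hom_{R'}(I\tensor_A R',M) \to F_{R'}(M)$, the $R'$-linearity being exactly the $R'$-linearity of $F_{R'}$ furnished by Lemma \ref{lem:runar}. Over the noetherian ring $R'$ the module $\Hom_{R'}(I\tensor_A R',M)$ is finitely generated, since it embeds into $M^{\oplus n}$ for any presentation $R'^{\oplus n} \to I\tensor_A R'$; hence its quotient $F_{R'}(M)$ is coherent. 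Thus $F_{R'}$ is bounded for every noetherian $R'$, i.e. $F$ is universally bounded.

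The argument is essentially formal, and I do not anticipate a serious obstacle. The only point meriting attention is that the generating surjection must be recognized as a morphism of $R'$-modules rather than merely of sets, so that finite generation of the source transfers to the target; this is precisely where the $R'$-linearity of $F_{R'}$, and hence Lemma \ref{lem:runar}, is invoked.
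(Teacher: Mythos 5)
Your proof is correct and is precisely the standard argument the paper has in mind when it calls this an ``easily proven Lemma'' (the paper itself gives no proof): lift the generator along a presentation of $I$ as a filtered colimit of finitely presented modules using that $F$ preserves direct limits, then over a noetherian $R'$ realize $F_{R'}(M)$ as an $R'$-linear quotient of $\Hom_{R'}(I\tensor_A R',M)$ via Example \ref{ex:fg_rest}. Your attention to the $R'$-linearity of the generating surjection, via Lemma \ref{lem:runar}, is exactly the right point to check, and the argument goes through without change.
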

Combining Proposition \ref{prop:nakayama}, Corollary
\ref{cor:nakayama_new}, and Lemma \ref{lem:nakayama2}, we obtain the
vanishing result we desire. 
\begin{cor}\label{cor:openness_van}
  Fix a noetherian ring $R$ and an $R$-linear, half-exact functor $F :
  \MOD{R} \to \MOD{R}$ which is finitely generated and preserves
  direct limits. If $\mathfrak{q}\in \spec R$ and
  $F(\kappa(\mathfrak{q})) = 0$, then there exists $r\in
  R-\mathfrak{q}$ such that $F_{R_r} \equiv 0$. 
\end{cor}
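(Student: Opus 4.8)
The plan is to combine the three preceding results exactly as the surrounding text advertises, with the only genuine work lying in the passage from vanishing on residue fields to vanishing of the localized functor $F_{R_r}$. First I would observe that $F$ is bounded: since $R$ is noetherian and $F$ is finitely generated and preserves direct limits, Lemma \ref{lem:nakayama2} shows that $F$ is universally bounded, and in particular bounded.

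With boundedness in hand, $F$ meets all the hypotheses of Corollary \ref{cor:nakayama_new}, being a bounded, half-exact, $R$-linear functor that commutes with direct limits. Hence $\Van(F) = \{\mathfrak{p} \in \spec R \suchthat F(\kappa(\mathfrak{p})) = 0\}$, and the assumption $F(\kappa(\mathfrak{q})) = 0$ gives $\mathfrak{q} \in \Van(F)$. Next, since $F$ is $R$-linear, finitely generated, and preserves direct limits, Proposition \ref{prop:nakayama} tells me that $\Van(F)$ is Zariski open. As $\mathfrak{q} \in \Van(F)$, I may therefore choose a basic open $D(r) = \spec R_r$ containing $\mathfrak{q}$---that is, $r \in R - \mathfrak{q}$---with $D(r) \subseteq \Van(F)$.

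The one step requiring care, and the step I expect to be the main obstacle, is upgrading the containment $D(r) \subseteq \Van(F)$ to the global statement $F_{R_r} \equiv 0$: by definition $D(r) \subseteq \Van(F)$ records only that $F$ kills every $R_{\mathfrak{p}}$-module for each $\mathfrak{p} \in D(r)$, whereas I want $F$ to kill every $R_r$-module. To bridge this I would invoke Proposition \ref{prop:lazardlem}. Fix an $R_r$-module $M$; since $r$ acts invertibly on $M$ it acts invertibly on $F(M)$ by $R$-linearity, so $F(M)$ is again an $R_r$-module, and it suffices to show that all of its localizations vanish. For $\mathfrak{p} \in D(r)$ the ring $R_{\mathfrak{p}}$ is flat over $R$, so Proposition \ref{prop:lazardlem} furnishes a natural isomorphism $F(M) \tensor_R R_{\mathfrak{p}} \cong F(M \tensor_R R_{\mathfrak{p}}) = F(M_{\mathfrak{p}})$; but $M_{\mathfrak{p}}$ is an $R_{\mathfrak{p}}$-module and $\mathfrak{p} \in \Van(F)$, whence $F(M_{\mathfrak{p}}) = 0$ and so $F(M)_{\mathfrak{p}} = 0$. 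Since the primes of $R_r$ are exactly those $\mathfrak{p}$ with $r \notin \mathfrak{p}$, every localization of the $R_r$-module $F(M)$ vanishes, forcing $F(M) = 0$; as $M$ was arbitrary, $F_{R_r} \equiv 0$. (Alternatively, one could inspect the proof of Proposition \ref{prop:nakayama}: the element $r$ it produces already satisfies $F_{R_r} \equiv 0$, because $(I_r,\eta_r)$ generates $F_{R_r}$ and $\eta_r = 0$, which would let me bypass the localization argument altogether.)
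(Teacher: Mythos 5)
Your proposal is correct and follows the paper's own proof essentially step for step: Corollary \ref{cor:nakayama_new} (with boundedness supplied by Lemma \ref{lem:nakayama2}) places $\mathfrak{q}$ in $\Van(F)$, Proposition \ref{prop:nakayama} gives openness of $\Van(F)$, and Proposition \ref{prop:lazardlem} is used exactly as in the paper to upgrade pointwise vanishing on $D(r)$ to $F_{R_r}\equiv 0$ by localizing the $R_r$-module $F(M)$ at each prime. Your only additions --- making the boundedness step explicit and noting the shortcut via the generator $(I_r,\eta_r)$ in the proof of Proposition \ref{prop:nakayama} --- are accurate refinements, not a different route.
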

\begin{proof}
  By Corollary \ref{cor:nakayama_new}, $\mathfrak{q} \in \Van(F)$. By
  Proposition \ref{prop:nakayama}, the set $\Van(F)$ is Zariski open,
  thus there exists $r\in R-\mathfrak{q}$ such that
  $\mathfrak{p} \in \spec R_r \subset \Van(F)$. Let $N\in \MOD{R_r}$
  and $\mathfrak{p} \in \spec R_r$, then by Proposition
  \ref{prop:lazardlem} it follows that $F(N)_{\mathfrak{p}} =
  F(N_{\mathfrak{p}})$. But $\mathfrak{p} \in \Van(F)$ and so
  $F(N)_{\mathfrak{p}} = 0$. Since $F(N)$ is an $R_r$-module, the
  result follows. 
\end{proof}
We now combine Corollary \ref{cor:openness_van} with the exchange
property proved by A. Ogus and G. Bergman
\cite[Cor.\ 5.1]{MR0302633}. Some notation: for a ring $A$, a
pair of $A$-linear functors $F_0$, $F_1 : \MOD{A} \to \MOD{A}$ is
\fndefn{cohomological} if for any exact sequence of $A$-modules $0 \to
M' \to M \to M'' \to 0$, there is a functorially induced exact
sequence of $A$-modules:
\[
\xymatrix{F_0(M') \ar[r] & F_0(M) \ar[r] & F_0(M'') \ar[r] & F_1(M')
  \ar[r] & F_1(M) \ar[r] & F_1(M'').}
\]
\begin{cor}[Property of exchange]\label{cor:prop_exch_fg}
  Fix a noetherian ring $R$ and a cohomological pair of $R$-linear 
  functors $F_0$, $F_1 :\MOD{R} \to \MOD{R}$ which  are finitely
  generated and preserve direct limits. For $i=0$, $1$ and for any
  $M\in \MOD{R}$, there is a natural map:
  \[
  \phi_i(M) :  F_i(R) \tensor_R M \to F_i(M). 
  \]
  Let $\mathfrak{q} \in \spec R$ and suppose that
  $\phi_0(\kappa(\mathfrak{q}))$ is surjective. Then,
  \begin{enumerate}
  \item there exists $r\in R-\mathfrak{q}$ such that for all $M\in
    \QCOH{R_r}$, the map $\phi_0(M)$ is an isomorphism.
  \item The following are equivalent:
    \begin{enumerate}
    \item $\phi_1(\kappa(\mathfrak{q}))$ is surjective;
    \item the $R_{\mathfrak{q}}$-module $F_1(R)_{\mathfrak{q}}$ is
      free. 
    \end{enumerate}
  \end{enumerate}
\end{cor}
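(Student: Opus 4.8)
The plan is to reduce everything to the pointwise exchange property of Ogus--Bergman \cite[Cor.~5.1]{MR0302633} over the local ring $R_{\mathfrak{q}}$, and then to spread the conclusions to a Zariski neighbourhood of $\mathfrak{q}$ using the finite-generation results of this section. First I would record the structural features that are preserved throughout. Since $(F_0,F_1)$ is cohomological, each $F_i$ is half-exact; each is also $R$-linear, finitely generated, and preserves direct limits, so by Lemma \ref{lem:nakayama2} each is universally bounded. In particular every $F_i(R)$ is a finite $R$-module, the map $\phi_i(M)$ is the value at $R$ of the natural transformation $\delta_{F_i,M}$ of Proposition \ref{prop:lazardlem}, and $\phi_i(M)$ is an isomorphism whenever $M$ is flat (in particular free). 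Localising at $\mathfrak{q}$, Corollary \ref{cor:qf_bdd}(2) ensures that $(F_0)_{R_{\mathfrak{q}}}$ and $(F_1)_{R_{\mathfrak{q}}}$ remain a bounded cohomological pair of half-exact, direct-limit-preserving $R_{\mathfrak{q}}$-linear functors, that the surjectivity hypothesis on $\phi_0$ at the closed point survives, and (again by the flat case of Proposition \ref{prop:lazardlem}) that $F_1(R)_{\mathfrak{q}} \cong F_1(R_{\mathfrak{q}})$.

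Assertion (2) is purely local at $\mathfrak{q}$, so I would dispatch it directly: over $R_{\mathfrak{q}}$ the equivalence of the surjectivity of $\phi_1(\kappa(\mathfrak{q}))$ with the freeness of the finite $R_{\mathfrak{q}}$-module $F_1(R_{\mathfrak{q}})$ is exactly the exchange dichotomy of \cite[Cor.~5.1]{MR0302633}, and since freeness of $F_1(R)_{\mathfrak{q}}$ is already a condition at $\mathfrak{q}$ no further spreading is required.

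For assertion (1), the same application of \cite[Cor.~5.1]{MR0302633} over $R_{\mathfrak{q}}$ gives that $\phi_0$ is an isomorphism on all $R_{\mathfrak{q}}$-modules; in particular the cokernel functor $C := \coker \phi_0$ satisfies $C_{R_{\mathfrak{q}}} \equiv 0$, i.e.\ $\mathfrak{q} \in \Van(C)$. Now $C$ is a quotient of the finitely generated functor $F_0$ and commutes with direct limits, hence is itself finitely generated, so Proposition \ref{prop:nakayama} shows $\Van(C)$ is Zariski open; it therefore contains $\spec R_r$ for some $r \in R - \mathfrak{q}$, which is to say $\phi_0(M)$ is surjective for every $M \in \MOD{R_r}$. (If one prefers, Corollary \ref{cor:openness_van} gives the same spreading once one checks that $C$ is half-exact; I avoid that verification by using Proposition \ref{prop:nakayama}, which needs only finite generation.) It then remains to upgrade surjectivity to bijectivity over $R_r$, and this elementary chase is the one step I would write out in full: given $M \in \MOD{R_r}$, pick a surjection $P \to M$ from a free module with kernel $N$, and compare the right-exact row $F_0(R)\otimes_R N \to F_0(R)\otimes_R P \to F_0(R)\otimes_R M \to 0$ with the row $F_0(N) \to F_0(P) \to F_0(M)$, which is exact at $F_0(P)$ by half-exactness of $F_0$. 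As $\phi_0(P)$ is an isomorphism ($P$ is free) and $\phi_0(N)$ is surjective, a short diagram chase forces $\phi_0(M)$ to be injective, hence an isomorphism.

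The genuine mathematical content is the exchange dichotomy itself, which I delegate to \cite[Cor.~5.1]{MR0302633}; within the present argument the load-bearing points are only that localisation preserves every structural hypothesis---so that Ogus--Bergman applies over $R_{\mathfrak{q}}$---which is supplied by Corollary \ref{cor:qf_bdd}, and the final bijectivity chase. Accordingly the main obstacle I anticipate is organisational rather than deep: matching the hypotheses of \cite[Cor.~5.1]{MR0302633} (half-exactness, the cohomological link between $F_0$ and $F_1$, and boundedness after localisation) so that its exchange statement can be invoked cleanly, after which Proposition \ref{prop:nakayama} converts the local conclusions into the asserted open statements.
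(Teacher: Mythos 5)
Your proof is correct, and its backbone coincides with the paper's (two-line) proof: boundedness of the $F_i$ via Lemma \ref{lem:nakayama2}, localization justified by Corollary \ref{cor:qf_bdd}(2) and Proposition \ref{prop:lazardlem}, and delegation of the exchange dichotomy to \cite[Cor.~5.1]{MR0302633}. Where you genuinely diverge is the spreading step for part (1): the paper simply cites Corollary \ref{cor:openness_van}, whose hypotheses include half-exactness --- a property that $\coker\phi_0$ is not known to have --- whereas you apply Proposition \ref{prop:nakayama} to the finitely generated, direct-limit-preserving functor $C=\coker\phi_0$, which needs no half-exactness. This is a real clarification of the paper's terse citation rather than a cosmetic change, and your explicit chase (free presentation $0\to N\to P\to M\to 0$, right-exactness of $F_0(R)\otimes_R-$ against half-exactness of $F_0$, with $\phi_0(P)$ an isomorphism by the flat case of Proposition \ref{prop:lazardlem}) correctly upgrades surjectivity to bijectivity over $R_r$; alternatively, one could localize at each $\mathfrak{p}\in\spec R_r$ and re-apply Ogus--Bergman there, but your chase is self-contained. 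One small point you elide: openness of $\Van(C)$ gives only $\spec R_r\subseteq\Van(C)$, and passing from this to $C(M)=0$ for \emph{all} $M\in\MOD{R_r}$ requires the localization argument $C(M)_{\mathfrak{p}}\cong C(M_{\mathfrak{p}})=0$ via Proposition \ref{prop:lazardlem} (this is exactly the final step in the paper's proof of Corollary \ref{cor:openness_van}); you assert the conclusion without this one-line verification, but it is immediate given the tools you have already assembled.
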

\begin{proof}
  By Lemma \ref{lem:nakayama2}, the functors $F_i$ are bounded. Now
  apply \cite[Cor.\ 5.1]{MR0302633} and Corollary
  \ref{cor:openness_van}. 
\end{proof}
\begin{proof}[Proof of Theorem \ref{mainthm:cohobc}]
  Throughout we fix a $2$-cartesian diagram of noetherian algebraic
  stacks:
  \[
  \xymatrix@-0.8pc{X_T \ar[r]^{g_X} \ar[d]_{f_T} & X \ar[d]^f \\ T
    \ar[r]^g & S.}
  \]
  The results are all smooth local on $S$ and $T$, thus we may assume
  that $S=\spec A$ and $T=\spec B$ and $g$ is induced by a ring
  homomorphism $A \to B$. We may also work with the global
  $\Ext$-groups instead of the relative $\SExt$-sheaves. 

  For an $A$-module $I$ set $E^q(I) =
  \Ext^q_{\Orb_X}(\cplx{M},\shv{N}\tensor_{\Orb_X} f^*I)$, which gives
  an $A$-linear functor $\MOD{A} \to \MOD{A}$. By Theorem
  \ref{mainthms:cohstk_flat}, the functor $E^q$ is coherent and by
  Lemma \ref{lem:lp_stk_hom} the functor preserves filtered colimits
  (in particular, it preserves direct limits). By Lemma
  \ref{lem:base_change}, if $J$ is a $B$-module, there is a natural
  isomorphism:
  \[
  E^q(J) = \Ext^q_{\Orb_X}(\cplx{M},\shv{N} \tensor_{\Orb_X} f^*g_*J)
  \cong \Ext^q_{\Orb_{X_T}}(\LDERF \QCPBK{(g_X)}\cplx{M}, g_X^*\shv{N}
  \tensor_{\Orb_{X_T}} f_T^*J).
  \]
  In particular, taking $J=B$ we obtain a natural map:
  \[
  E^q(A)\tensor_A B := \Ext^q_{\Orb_X}(\cplx{M},\shv{N})\tensor_A B
  \to \Ext^q_{\Orb_{X_T}}(\LDERF \QCPBK{(g_X)}\cplx{M}, g_X^*\shv{N})
  = E^q(B).   
  \]
  The result now follows from Corollary \ref{cor:prop_exch_fg}. 
\end{proof}
\bibliography{bibtex_db/references}
\bibliographystyle{bibtex_db/dary}
\end{document}